\DeclareMathOperator{\End}{End} \DeclareMathOperator{\ch}{char}
 \DeclareMathOperator{\Ker}{Ker}
 \DeclareMathOperator{\sign}{sign}
\DeclareMathOperator{\id}{id} 
\DeclareMathOperator{\Bd}{Bd} \DeclareMathOperator{\Ann}{Ann}
\DeclareMathOperator{\Std}{Std} \DeclareMathOperator{\std}{Std}
\DeclareMathOperator{\cont}{cont} \DeclareMathOperator\Shape{Shape}
\def\Add{\mathscr A}
\def\Rem{\mathscr R}
\numberwithin{equation}{section}
\numberwithin{equation}{section}
\newtheorem{theorem}[equation]{Theorem}
\newtheorem{lemma}[equation]{Lemma}
\newtheorem{corollary}[equation]{Corollary}
\newtheorem{definition}[equation]{Definition}
\newtheorem{remark}[equation]{Remark}
\def\({\big(}
\def\){\big)}
\def\lam{\lambda}
\def\s{{\mathfrak s}}
\def\t{{\mathfrak t}}
\def\tlam{\t^{\lambda}}
\def\u{{\mathfrak u}}
\def\v{{\mathfrak v}}
\def\bS{{\mathtt S}}
\newcommand{\pp}{\mathcal{P}}
\newcommand{\C}{\mathbb{C}}
\newcommand{\N}{\mathbb N}
\newcommand{\Q}{\mathbb Q}
\newcommand{\Z}{\mathbb Z}
\newcommand{\Sym}{\mathfrak S}
\def\bb{\mathfrak{B}}
{\setlength{\textwidth}{125mm}} {\setlength{\textheight}{185mm}}
\title{On a theorem of Lehrer and Zhang}
\author{Jun Hu$^{\ast}$ and Zhankui Xiao$^{\dagger}$\\[5pt]
$^\ast$School of Mathematics, Beijing Institute of Technology\\
Beijing, 100081, P.R. China\\
\&\\
School of Mathematics and Statistics\\
University of Sydney,
NSW, 2006, Australia\\ [1pt]
Email: junhu303@yahoo.com.cn\\[3pt]
$^\dagger$School of Mathematical Sciences, Huaqiao University\\[1pt]
Quanzhou, Fujian, 362021, P. R. China\\ Email: zhkxiao@gmail.com}
\begin{document}

\maketitle
\begin{abstract} Let $K$ be an arbitrary field of characteristic
not equal to $2$. Let $m, n\in\N$ and $V$ an $m$ dimensional
orthogonal space over $K$. There is a right action of the
Brauer algebra $\bb_n(m)$ on the $n$-tensor space
$V^{\otimes n}$ which centralizes the left action of the orthogonal
group $O(V)$. Recently G.I.~Lehrer and R.B.~Zhang defined certain quasi-idempotents $E_i$ in $\bb_n(m)$ (see (\ref{keydfn})) and proved that the annihilator of $V^{\otimes n}$ in $\bb_n(m)$ is always equal to the
two-sided ideal generated by $E_{[(m+1)/2]}$ if $\ch K=0$ or $\ch K>2(m+1)$. In this paper we extend this theorem to arbitrary field $K$ with $\ch K\neq 2$ as conjectured by Lehrer and Zhang. As a byproduct, we discover a combinatorial identity which relates to the dimensions of Specht modules over the symmetric groups of different sizes and a new integral basis for the annihilator of $V^{\otimes m+1}$ in $\bb_{m+1}(m)$.
\end{abstract}

\section{Introduction}\label{xxsec1}

Let $\N$ be the set of non-negative integers. Let $x$ be an indeterminate over $\Z$ and $0<n\in\N$. The Brauer
algebra $\bb_n(x)$ over $\Z[x]$ was introduced by Richard
Brauer (see \cite{Br}) when he studied how the $n$-tensor space
$V^{\otimes{n}}$ decomposes into irreducible modules over the
orthogonal group $O(V)$ or the symplectic group $Sp(V)$, where $V$
is an orthogonal vector space or a symplectic vector space. It was
defined as the free $\Z[x]$-module on the basis of the set $\Bd_n$
of all Brauer $n$-diagrams, graphs on $2n$ vertices, and $n$ edges with
the property that every vertex is incident to precisely one edge.
The multiplication of two Brauer n-diagrams is defined using natural
concatenation of diagrams. Precisely, we compose two diagrams $D_1,
D_2$ by identifying the bottom row of vertices in $D_1$ with the top
row of vertices in $D_2$. The result is a graph, with a certain
number, $n(D_1,D_2)$, of interior loops. After removing the interior
loops and the identified vertices, retaining the edges and remaining
vertices, we obtain a new Brauer $n$-diagram $D_1\circ D_2$, the
composite diagram. Then we define $D_1\cdot
D_2=x^{n(D_1,D_2)}D_1\circ D_2$. For example, let $d$ be the
following Brauer $5$-diagram.
\medskip
\begin{center}
\scalebox{0.3}[0.3]{\includegraphics{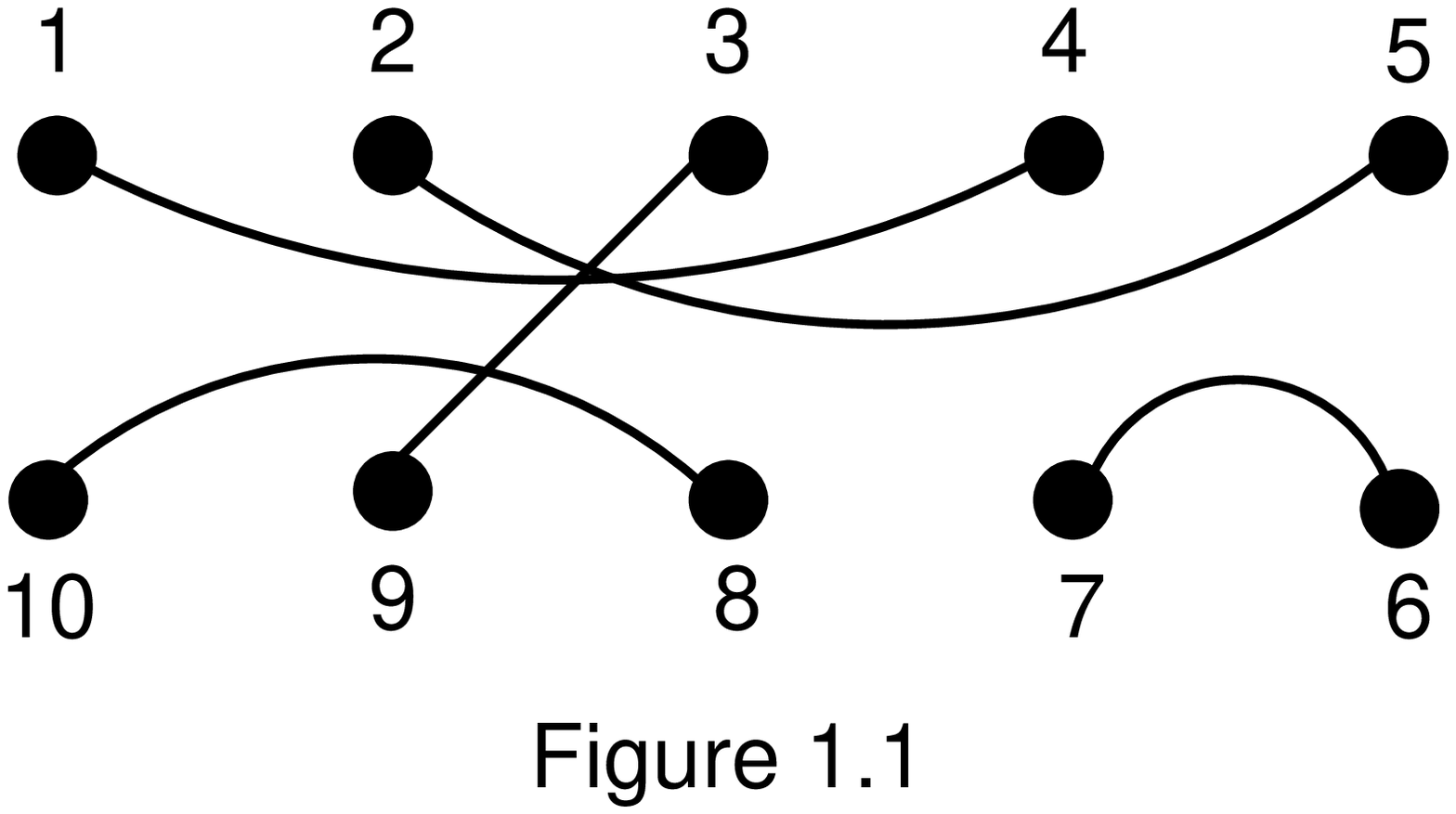}}
\end{center}
\medskip
Let $d'$ be the following Brauer $5$-diagram.
\medskip
\begin{center}
\scalebox{0.3}[0.3]{\includegraphics{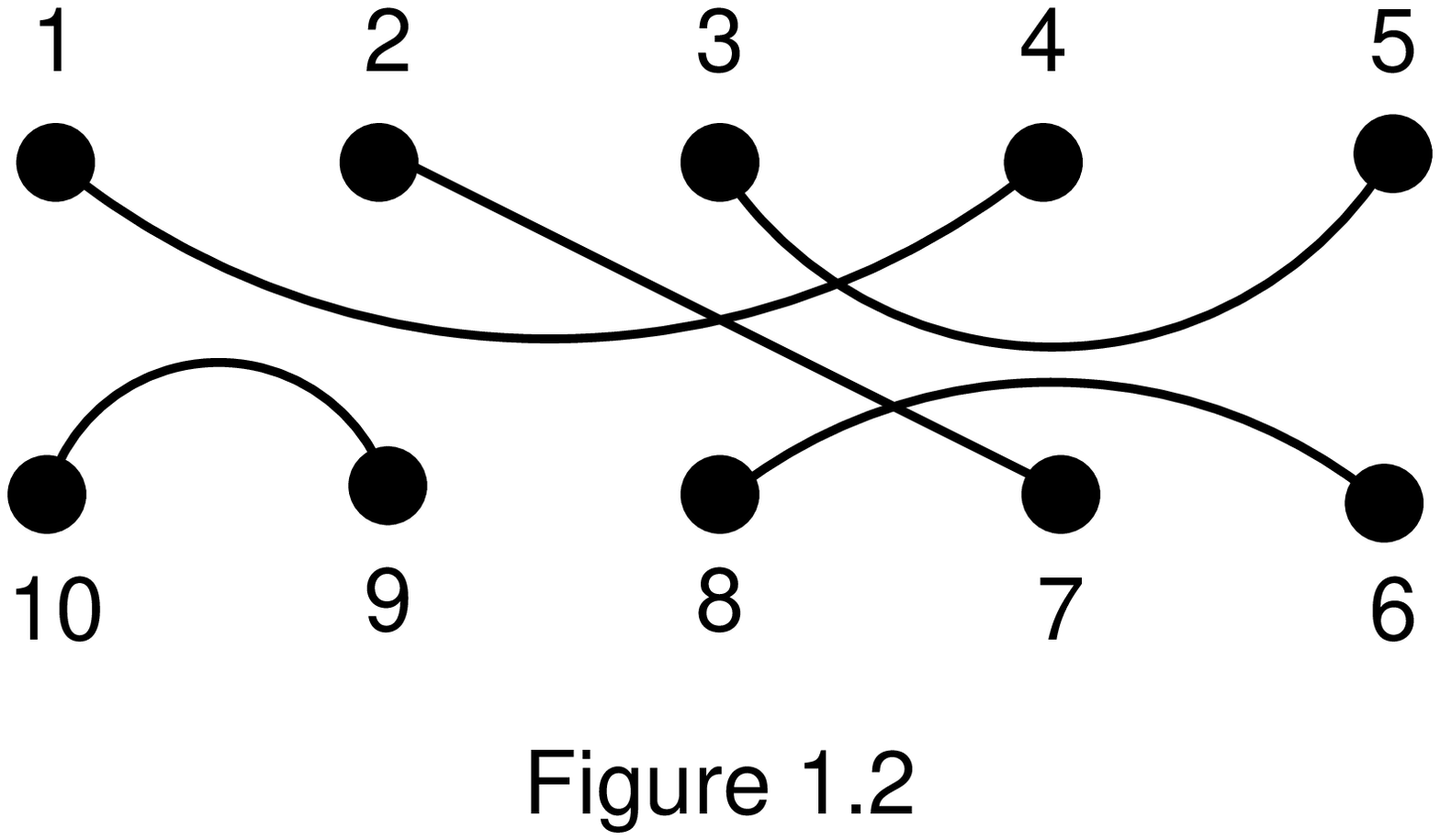}}
\end{center}
\medskip
Then $dd'$ is equal to
\medskip
\begin{center}
\scalebox{0.53}[0.53]{\includegraphics{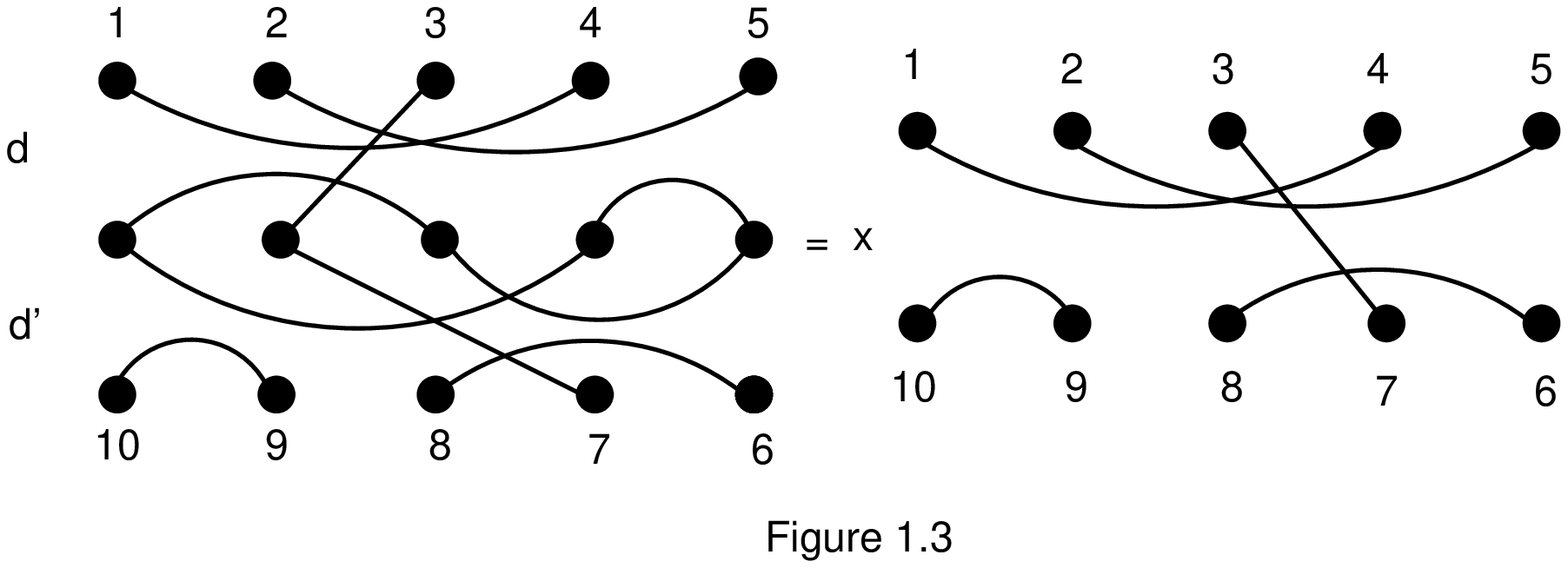}}
\end{center}
\medskip
In general, the multiplication of two elements in $\bb_n(x)$ is
given by the linear extension of a product defined on diagrams.
For each integer $i$ with $1\leq i\leq 2n$, we define $i^{-}:=2n+1-i$.
The Brauer algebra $\bb_n(x)$ is a free $\Z[x]$-module with rank $(2n-1)\cdot
(2n-3)\cdots 3\cdot 1$. For any $\Z[x]$-algebra $R$ with $x$
specialized to $\delta\in R$, we define $\bb_n(\delta)_{R}:=R\otimes_{\Z[x]}\bb_n(x)$.

Now let $K$ be an arbitrary field of characteristic not equal to $2$.
Let $m, n$ be two positive integers and $V$ an $m$ dimensional orthogonal space over
$K$. Let $\bb_n(m)$ be the specialized Brauer algebra
with parameter $m\cdot 1_{K}$. There is a right action of $\bb_n(m)$ on the
$n$-tensor space $V^{\otimes n}$ which commutes with the left action
of the orthogonal group $O(V)$. If $K=\mathbb{C}$, then by a
well-known result of Brauer \cite{Br}, the canonical
homomorphism $\varphi: \bb_n(m)\rightarrow\End_{O(V)}(V^{\otimes
n})$ is surjective. In general, as long as $K$ is an infinite field
of characteristic not equal to $2$, the surjection still holds and
we actually have a characteristic-free version of the Schur--Weyl
duality between $\bb_n(m)$ and $KO(V)$ on $V^{\otimes n}$. For the
proof as well as the symplectic version of these results, we refer the readers to
\cite{DP}, \cite{DDH} and \cite{DH}.\smallskip

The above Schur--Weyl duality is closely related to the second
fundamental theorem in invariant theory for $O(V)$. By \cite{DWH}
and \cite{RS}, $\bb_n(m)$ is semisimple if and only if $m\geq n-1$.
From the representation theoretic point of view, it is desirable to
describe the radical of $\bb_n(m)$ in the non-semisimple case. By
\cite{Ga2}, the kernel of $\varphi$ is closely related to the
radical of $\bb_n(m)$. Therefore, it is important to understand
the kernel of $\varphi$. Note that $\varphi$ is not injective if and
only if $n\geq m+1$. In \cite[Theorem 4.8]{Ga2}, using the invariant
theory for $O(V)$, Gavarini showed that the kernel of $\varphi$ is
spanned by some diagrammatic minors of order $m+1$ (which are
certain alternating sum of some Brauer $n$-diagrams). Note that,
however, those diagrammatic minors are not necessarily $K$-linearly
independent. In \cite[Theorem 1.4, Theorem 6.9]{DH}, an integral
basis for the kernel of $\varphi$ was obtained. The Brauer algebra
$\bb_n(m)$ can be endowed with a right $\Sym_{2n}$-module structure
in a way such that $\Ker\varphi$ is an $\Sym_{2n}$-submodule
(cf. \cite{DH} and \cite{Ga1}). So far, to the best of our
knowledge, it remains an open question on whether or not there
exists a characteristic-free basis for $\Ker\varphi$ which consists
of some diagrammatic minors of order $m+1$.\smallskip

In \cite[Corollary 5.9]{HX1}, we proved in the symplectic case that
$\Ker\varphi$ is always generated by one specific
diagrammatic Pfaffian of order $2m+2$. In the quantized type $C$
case, we proved (in \cite[Proposition 5.6]{HX1}) a similar statement
under the assumption that the quantum parameter $q$ is generic.
Recently, G.I. Lehrer and R.B. Zhang have studied extensively the orthogonal case in \cite{LZ2} by connecting it with  the second fundamental theorem of invariant theory for the orthogonal group.
For each Brauer $n$-diagram $D\in\Bd_n$, the vertices of $D$ are
arranged in two rows: the top and bottom rows.
The vertices in top row are labeled by the indices $1,2,\cdots,n$ from left to right; while
the vertices in bottom row are labeled by the indices
$1^-,\cdots,n^-$ from left to right. The following key definitions are due to them.

\begin{definition} \label{keydfn} {\rm (\cite[Definition 4.2]{LZ2})} Let $a,b\in\N$ such that $1\leq a+b\leq
n$. Let $\Bd(a,b)$ be the set of all Brauer $n$-diagrams $D$ such
that: \begin{enumerate}
\item[(1)] for each integer $s$ with $a+b+1\leq s\leq n$, $D$ connects the vertex
labeled by $s$ with the vertex labeled by $s^-$; and
\item[(2)] for each integer $s$ with $s\in\{1,2,\cdots,a,(a+1)^-,(a+2)^-,\cdots,(a+b)^-\}$, $D$ connects the vertex
labeled by $s$ with the vertex labeled by $t$ for
$t\in\{1^-,2^-,\cdots,a^-,a+1,a+2,\cdots,a+b\}$.
\end{enumerate}
We define $$ E_{a,b}:=\sum_{D\in \Bd(a,b)}\sign(D)D,\quad\,
E_i:=E_{i,m+1-i},\,\,\forall\,\,0\leq i\leq m+1.
$$
\end{definition}
We refer the reader to Definition \ref{dfn000} for the definition of $\sign(D)$.\footnote{At a first look, the definition of $E_i$ which we give here seems to be different with \cite[Definition 4.2]{LZ2}. But they are indeed the same. The equivalence between the two definitions follows
from a simple counting by the proof given in the paragraph directly below \cite[(4.5)]{LZ2}.}
Lehrer and Zhang have proved a number of important properties about those $E_i$. In particular, they have proved the following theorem in \cite{LZ2}.

\begin{theorem} {\rm (\cite[Proposition 4.4, Theorems 4.3, 9.4]{LZ2})} \label{LZ} Assume that $m<n$. Then for each integer $i$ with $0\leq i\leq [(m+1)/2]$, $E_i\in\Ker\varphi$. Furthermore, if $\ch K=0$ or $\ch K>2(m+1)$ then $\Ker\varphi$ is the two-sided ideal of $\bb_n(m)$ generated by $E_{[(m+1)/2]}$.
\end{theorem}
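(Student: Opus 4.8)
\emph{The inclusions $E_i\in\Ker\varphi$.} I would prove the two assertions separately, reducing the second to the universal case $n=m+1$; I begin with the first, showing in fact that $\varphi(E_{a,b})=0$ for every $a,b\ge 0$ with $a+b=m+1$, hence for all $E_i$ with $0\le i\le m+1$. The right $\bb_n(m)$-action on $V^{\otimes n}$ is generated by the place permutations in $\Sym_n$ together with the contraction operators, and the nondegenerate form identifies $V\cong V^{\ast}$ as $KO(V)$-modules, giving for each $c+d=2n$ a duality isomorphism $\End_{O(V)}(V^{\otimes n})\xrightarrow{\ \sim\ }\Hom_{O(V)}(V^{\otimes c},V^{\otimes d})$ compatible with concatenation of Brauer diagrams. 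Under this identification, and using $m<n$, the operator $\varphi(E_{a,b})$ acts as the identity on the last $n-m-1$ tensor slots and, on the remaining $m+1$ of them, as the image of the full antisymmetriser $y_{m+1}=\sum_{\sigma\in\Sym_{m+1}}\sign(\sigma)\sigma$ --- with $a$ slots read in $V^{\otimes n}$ and $b$ of them in the dual copy, and with no closed loop (hence no power of $m$) created, since the diagrams in $\Bd(a,b)$ have none. As $y_{m+1}$ factors through $\Lambda^{m+1}V=0$ (because $\dim V=m$), this vanishes. This proves the first statement, and yields the easy inclusion $\langle E_{[(m+1)/2]}\rangle\subseteq\Ker\varphi$.

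\emph{Reduction to $n=m+1$.} Let $\iota\colon\bb_{m+1}(m)\hookrightarrow\bb_n(m)$ be the unital embedding adjoining $n-m-1$ through-strands; it carries $E_i$ to $E_i$, and $\varphi_n\circ\iota=\varphi_{m+1}\otimes\mathrm{id}$ under $V^{\otimes n}\cong V^{\otimes(m+1)}\otimes V^{\otimes(n-m-1)}$. Using Gavarini's description of $\Ker\varphi$ by diagrammatic minors of order $m+1$ (\cite{Ga2}) together with the integral basis of \cite{DH}, every such minor in $\bb_n(m)$ lies in the two-sided ideal generated by those supported on a fixed set of $m+1$ strands; hence $\Ker\varphi_n$ is the two-sided ideal of $\bb_n(m)$ generated by $\iota(\Ker\varphi_{m+1})$. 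Granting $\Ker\varphi_{m+1}=\langle E_{[(m+1)/2]}\rangle$ in $\bb_{m+1}(m)$ and applying $\iota$, one obtains $\Ker\varphi_n=\langle E_{[(m+1)/2]}\rangle$, so it suffices to treat $n=m+1$.

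\emph{The case $n=m+1$.} Now $\bb_{m+1}(m)$ is split semisimple: over $\mathbb{C}$ this is the criterion quoted above (as $m\ge(m+1)-1$), and over $K$ it persists under our hypothesis, since the primes that could spoil the Gram determinants of the cell forms are bounded by $2(m+1)$. Its simple modules are the cell modules $\Delta(f,\mu)$ with $0\le f\le\lfloor(m+1)/2\rfloor$ and $\mu\vdash m+1-2f$, so both $\Ker\varphi_{m+1}$ and $\langle E_{[(m+1)/2]}\rangle$ are sums of Wedderburn components, the latter contained in the former by the first paragraph. Since $\Delta(f,\mu)$ is annihilated by $\varphi_{m+1}$ exactly when the $O(V)$-module labelled by $\mu$ is zero, i.e. when $\mu'_1+\mu'_2>m$ (here $\mu'$ is the conjugate partition), the whole theorem comes down to the claim that
\begin{equation*}
E_{[(m+1)/2]}\text{ acts as a non-zero endomorphism of }\Delta(f,\mu)\text{ whenever }\mu'_1+\mu'_2>m.
\end{equation*}
To prove this I would write $E_{[(m+1)/2]}=E_{a,b}$ with $a=[(m+1)/2]$, express its action on $\Delta(f,\mu)$ through symmetric-group antisymmetrisers on an auxiliary tensor space, and evaluate it against the cell-module vector indexed by the tableau that fills the columns of $\mu$ consecutively; the hypothesis $\mu'_1+\mu'_2>m$ is precisely what makes the $O(V)$-side contribute non-trivially, and the resulting scalar is a product of integers of absolute value at most $2(m+1)$, hence non-zero in $K$. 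With the easy inclusion this gives equality.

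\emph{The main obstacle.} The decisive and delicate step is this last one: pinning down exactly which cell modules detect $E_{[(m+1)/2]}$ and showing the relevant ``evaluation numbers'' do not vanish. Equivalently, one must match $\dim\Ker\varphi_{m+1}=\sum_f\sum_{\mu\vdash m+1-2f,\ \mu'_1+\mu'_2>m}(\dim\Delta(f,\mu))^2$ against $\dim\langle E_{[(m+1)/2]}\rangle$, which amounts to a combinatorial identity relating dimensions of Specht modules of the symmetric groups $\Sym_{m+1-2f}$ for varying $f$. Establishing that identity, and checking that every prime occurring in it is $\le 2(m+1)$ so that nothing degenerates, is where the characteristic hypothesis $\ch K>2(m+1)$ (or $\ch K=0$) is genuinely used and where the real difficulty lies.
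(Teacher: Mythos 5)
First, a point of order: the paper does not prove this theorem at all --- it is quoted from Lehrer--Zhang \cite[Proposition 4.4, Theorems 4.3, 9.4]{LZ2} as an input, and the body of the paper (Sections 3--4) is devoted to removing the hypothesis $\ch K>2(m+1)$ from its second assertion. So your attempt can only be judged on its own merits and against the machinery the paper later builds. Your first paragraph is essentially the correct and standard argument for $E_i\in\Ker\varphi$: under the identification $V\cong V^{*}$ furnished by the bilinear form, $E_{a,b}$ with $a+b=m+1$ corresponds to the full antisymmetriser on $m+1$ tensor slots, which vanishes because $\Lambda^{m+1}V=0$ when $\dim V=m$; the sign bookkeeping you suppress is exactly what Lemma \ref{lzdfn} (the description $E_{a,b}=b(S_{a,b},S'_{a,b},\beta_{a+b})$) is for. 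That part is fine.

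For the second assertion, however, your proposal is a road map rather than a proof, and the gap sits where you yourself locate it. Three steps are asserted without justification: (i) that $\Ker\varphi_n$ is the two-sided ideal generated by $\iota(\Ker\varphi_{m+1})$ --- this does not follow formally from Gavarini's spanning set (his diagrammatic minors need not be supported on a fixed set of $m+1$ strands up to the two-sided action), and in the present paper it is precisely the content of the Doty--Hu basis argument in Theorem \ref{xx3.6}; (ii) that $\bb_{m+1}(m)$ stays split semisimple for $\ch K>2(m+1)$ --- plausible, but ``the primes spoiling the Gram determinants are bounded by $2(m+1)$'' needs the explicit Gram determinant formulas and is not checked; and (iii) the decisive non-vanishing of $E_{[(m+1)/2]}$ on every cell module $\Delta(f,\mu)$ with $\mu'_1+\mu'_2>m$, which you explicitly defer. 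Note that for $n=m+1$ the condition $\mu'_1+\mu'_2>m$ forces $f=0$ and $\mu$ to have at most two columns, so (iii) amounts to showing that $Y_{(a_0,m+1-a_0)}$ with $a_0=m+1-[(m+1)/2]$ acts nontrivially on each Specht module $S^{(2^k,1^{m+1-2k})}$ for $0\le k\le[(m+1)/2]$, together with the dimension identity $\sum_{k}(\dim S^{(m+1-k,k)})^2=\dim S^{(m+1,m+1)}$ --- which is exactly what Theorems \ref{mainthm} and \ref{mainthm2} of this paper establish (and by a route that works in any characteristic $\neq 2$, which is the whole point of the paper). As written, the proposal correctly identifies the skeleton of the argument but leaves its central computation unperformed.
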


Furthermore, Lehrer and Zhang have conjectured in \cite[Remark 9.5]{LZ2} that the second statement of the above theorem is true for arbitrary field $K$ with $\ch K\neq 2$. The main result in this paper is a proof of this conjecture. In other words, we extend Lehrer and Zhang's theorem to arbitrary field $K$ with $\ch K\neq 2$. That is,

\begin{theorem} \label{conj} Let $K$ be an arbitrary field of characteristic other than two. Then $\Ker\varphi$ is always equal to the two-sided ideal generated by $E_{[(m+1)/2]}$.
\end{theorem}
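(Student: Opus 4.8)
The plan is to reduce Theorem~\ref{conj} to the case $n=m+1$ and then, in that case, to an explicit integral computation. Write $\varphi_n$ for the given map of $\bb_n(m)$ and $\varphi_{m+1}$ for its analogue on $\bb_{m+1}(m)$, and let $\iota_n\colon\bb_{m+1}(m)\hookrightarrow\bb_n(m)$ be the unital algebra embedding that pads an $(m+1)$-diagram by the through-strands $m+2,\dots,n$. From Definition~\ref{keydfn}, $\iota_n(E_{[(m+1)/2]})=E_{[(m+1)/2]}$, and since the padding strands act by the identity we have $\varphi_n\circ\iota_n=\varphi_{m+1}(-)\otimes\operatorname{id}_{V^{\otimes(n-m-1)}}$, whence $\iota_n(\Ker\varphi_{m+1})\subseteq\Ker\varphi_n$. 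The inclusion $\langle E_{[(m+1)/2]}\rangle\subseteq\Ker\varphi_n$ always holds by the first part of Theorem~\ref{LZ}; and by \cite[Theorem~4.8]{Ga2} (see also \cite{DH}) the two-sided ideal $\Ker\varphi_n$ is, over every field $K$ with $\ch K\neq2$, generated by $\iota_n(\Ker\varphi_{m+1})$. Hence, if Theorem~\ref{conj} is known for $n=m+1$, then over any such $K$ we get $\Ker\varphi_n=\langle\iota_n(\Ker\varphi_{m+1})\rangle=\langle\iota_n(\langle E_{[(m+1)/2]}\rangle)\rangle\subseteq\langle E_{[(m+1)/2]}\rangle$, the last inclusion because $\iota_n$ is an algebra homomorphism carrying $E_{[(m+1)/2]}$ to $E_{[(m+1)/2]}$. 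So it is enough to treat $n=m+1$.

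Assume from now on that $n=m+1$ and write $\varphi$ for $\varphi_{m+1}$; then $\bb_{m+1}(m)$ is semisimple over $\Q$ (by \cite{DWH}, \cite{RS}), and $\Ker\varphi_{\Q}$ is the sum of those matrix blocks indexed by cell data $(\ell,\lambda)$ whose cell modules are annihilated by $\varphi$. Since $\dim_K\bb_{m+1}(m)=(2m+1)!!$ and, by \cite{DH}, $\dim_K\Ker\varphi$ are both independent of $K$, and since $\langle E_{[(m+1)/2]}\rangle\subseteq\Ker\varphi$ always, it suffices to prove $\dim_K\langle E_{[(m+1)/2]}\rangle\ge\dim_{\Q}\langle E_{[(m+1)/2]}\rangle_{\Q}$, the right-hand side being equal to $\dim_{\Q}\Ker\varphi_{\Q}$ by Theorem~\ref{LZ}. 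To do this I would pass to $R=\Z[1/2]$ and, inside the free $R$-module $\bb_{m+1}(m)_R$ on the diagram basis, form the $R$-submodule $J$ spanned by all products $D_1E_{[(m+1)/2]}D_2$ with $D_1,D_2\in\Bd_{m+1}$; these lie in $\bb_{m+1}(m)_R$ (the coefficients of $E_{[(m+1)/2]}$ are $\pm1$), they lie in $\Ker\varphi_R$, and $J\otimes_RK$ maps onto $\langle E_{[(m+1)/2]}\rangle$ over any $K$. Because $\bb_{m+1}(m)_R/\Ker\varphi_R$ embeds in $\End(V^{\otimes m+1}_R)$ it is $R$-torsion free, so $\Ker\varphi_R$ is a \emph{pure} submodule; and $J$ has the same $\Q$-span as $\Ker\varphi_R$. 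Hence once $J$ is shown to be pure in $\bb_{m+1}(m)_R$ we get $J=\Ker\varphi_R$, a split injection $J\otimes_RK\hookrightarrow\bb_{m+1}(m)_K$, and therefore $\dim_K\langle E_{[(m+1)/2]}\rangle=\operatorname{rank}_R J=\dim_{\Q}\langle E_{[(m+1)/2]}\rangle_{\Q}$, as required. So the whole theorem reduces to the purity of $J$.

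The main step is to prove this purity, which amounts to realising each element of an integral basis of $\Ker\varphi_R$ as a $\Z$-linear combination of the products $D_1E_{[(m+1)/2]}D_2$, i.e.\ to writing down an $R$-basis of $J$ that is echelon with respect to the diagram basis, with all pivot coefficients $\pm1$. I would construct such a basis from suitably ``straightened'' diagrammatic minors --- products $D_1E_{[(m+1)/2]}D_2$ reindexed by pairs of standard tableaux of the shapes $\lambda$ occurring in $\Ker\varphi_{\Q}$ --- arranged so that distinct basis elements have distinct leading diagrams with unit leading coefficients. Once the echelon property is checked over $\Z$, a dimension count forces the cardinality of the basis to equal $\dim_{\Q}\Ker\varphi_{\Q}$; comparing this with the dimension formula that comes from the integral basis of \cite{DH} (which is recorded through Specht modules of $\Sym_{2m+2}$) yields precisely the promised identity relating dimensions of Specht modules of $\Sym_\ell$ for several values of $\ell\le m+1$, and the basis itself is the new integral basis for the annihilator of $V^{\otimes m+1}$ in $\bb_{m+1}(m)$.

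The step I expect to be the main obstacle is exactly this explicit construction: producing the denominator-free straightened minors and verifying the echelon property over $\Z$ --- equivalently, proving the combinatorial Specht-dimension identity --- is where a genuinely characteristic-free handle on the diagrammatic minors is needed, the known straightening in characteristic zero involving denominators bounded in terms of $2(m+1)$ (which is the source of the hypothesis $\ch K=0$ or $\ch K>2(m+1)$ in Theorem~\ref{LZ}). By contrast, the reduction to $n=m+1$, the reduction of that case to the purity of $J$, and the passage from purity back to Theorem~\ref{conj} are comparatively formal, resting on \cite[Theorem~4.8]{Ga2}, \cite{DH}, and Theorem~\ref{LZ}.
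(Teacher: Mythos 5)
Your high-level strategy --- reduce to $n=m+1$, then control $\dim_K\langle E_{[(m+1)/2]}\rangle$ from below by an integral (purity) argument so that it is forced to coincide with the $\Q$-dimension given by Theorem~\ref{LZ} --- is a reasonable way to organise the problem, and the reformulation in terms of purity of the $\Z[1/2]$-span $J$ of the products $D_1E_{[(m+1)/2]}D_2$ is correct in principle (pure submodules of a free module over the PID $\Z[1/2]$ with the same $\Q$-span coincide). However, the proposal has two genuine gaps.

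First, the reduction to $n=m+1$ is not justified by the citation you give. Gavarini's Theorem~4.8 exhibits a \emph{spanning set} of diagrammatic minors of order $m+1$ for $\Ker\varphi_n$; it does not assert that $\Ker\varphi_n$ is the two-sided ideal of $\bb_n(m)$ generated by $\iota_n(\Ker\varphi_{m+1})$. A general diagrammatic minor in $\bb_n(m)$ need not be of the form $D_1\iota_n(y)D_2$ with $y\in\Ker\varphi_{m+1}$: one must first relocate the $(m+1)$-subsets and the residual pairing $\beta$, and the residual pairing may contain horizontal edges, so the conjugating elements are Brauer diagrams, not just permutations. Establishing that each minor nevertheless lands in the ideal generated by some $E_{a,b}$ is exactly the content of Lemma~\ref{xx3.3} and Theorem~\ref{xx3.6} of the paper, and this argument cannot be skipped. (The statement you use is in fact \emph{true}, but it is a consequence of Theorem~\ref{xx3.6}, not of Gavarini alone.)

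Second, and more seriously, the step that your proposal flags as ``the main obstacle'' --- constructing a $\Z$-echelon basis of $J$ with unit pivots, i.e.\ a denominator-free straightening of the diagrammatic minors --- is in fact the entire content of the theorem, and you do not supply any argument for it. This is precisely where the paper's new idea lies. The paper's mechanism is quite different from echelonizing diagrams: it passes to the symmetric group quotient by observing $E_{a,b}\equiv Y_{(a,b)}\pmod{\bb_n^{(1)}}$, and then proves (Theorem~\ref{mainthm}) that $\langle X_{n-a}\rangle_0=(K\Sym_n)^{\unrhd(n-a,a)}$ for every field with $\ch K\neq2$. The engine of that proof is the seminormal-form computation in Lemma~\ref{keylem}: for a two-row partition $\lambda=(n-k,k)$ one has $z_\lambda=\gamma_{\t^{\lambda'}}f_{\tlam\t_\lambda}$ with $\gamma_{\t^{\lambda'}}=[\lambda']!=2^k$. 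Thus the only denominator that can ever appear in the relevant straightening is a power of $2$, which explains structurally why the hypothesis improves from $\ch K>2(m+1)$ to $\ch K\neq2$. Your proposal never isolates the two-row structure nor the $2^k$ factor, and without it there is no way to see why the echelon pivots should be units away from $2$. In short, the proposal correctly reduces the theorem to the right kind of integrality statement, but then stops at the point where the actual proof begins.
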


As a byproduct, we discover (in Theorem \ref{mainthm2} and Corollary \ref{maincor}) a combinatorial identity which connects to the dimensions of some Specht modules over the symmetric group $\Sym_{2m+2}$ and the symmetric group $\Sym_{m+1}$. We get (in Corollary \ref{maincor2}) a new integral basis for the annihilator of $V^{\otimes m+1}$ in $\bb_{m+1}(m)$. The content is organized as follows. In Section 2 we recall some basic knowledge
about the structure and representation theory of the Brauer algebras
as well as some related combinatorics which are needed later. In Section 3 we prove that the annihilator of $V^{\otimes n}$ in $\bb_n(m)$ is equal to the two-sided ideal generated by $E_0,E_1,\dots,E_{[(m+1)/2]}$.
The proof makes essential use of the integral basis of $\Ker\varphi$
obtained in \cite{DH}. In Section 4 we prove our main result Theorem \ref{conj}.
The proof will proceed in three steps. The main strategy to
prove Theorem \ref{conj} is to transform it into a statement about
identification between certain two-sided ideals in the symmetric group
algebra $K\Sym_n$. For the latter, we make use of the Young
seminormal basis and the Murphy basis theory of the symmetric group
algebras as well as the first main result obtained in Section 3.

\section{The Brauer algebra}

The Brauer algebra $\bb_n(x)$ can be alternatively defined as the
unital associative $\Z[x]$-algebra with generators
$s_1,\cdots,s_{n-1},e_1,$ $\cdots,e_{n-1}$ and relations (see
\cite{En}):
$$\begin{matrix}s_i^2=1,\,\,e_i^2=xe_i,\,\,e_is_i=e_i=s_ie_i,
\quad\forall\,1\leq i\leq n-1,\\
s_is_j=s_js_i,\,\,s_ie_j=e_js_i,\,\,e_ie_j=e_je_i,\quad\forall\,1\leq
i<j-1\leq n-2,\\ s_is_{i+1}s_i=s_{i+1}s_is_{i+1},\,\,
e_ie_{i+1}e_i=e_i,\,\, e_{i+1}e_ie_{i+1}=e_{i+1},\,\,\forall\,1\leq
i\leq n-2,\\
s_ie_{i+1}e_i=s_{i+1}e_i,\,\,e_{i+1}e_is_{i+1}=e_{i+1}s_i,\quad\forall\,1\leq
i\leq n-2.\end{matrix}
$$
 Note that the
subalgebra of $\bb_n(x)$ generated by $s_1,\cdots,s_{n-1}$ is
isomorphic to the symmetric group algebra $\Z[x]\Sym_n$ of $\Sym_n$
over $\Z[x]$ and $s_1,\cdots,s_{n-1}$ are the standard Coxeter
generators. Let $\ell:{\Sym_n}\rightarrow\N$ be the length function
on $\Sym_n$ so that $\ell(w)=k$ if $k$ is minimal such that
$w=s_{i_1}\dots s_{i_k}$, for some $s_{i_j}$ with $1\le i_j<n$.

For each integer $1\leq
j<n$, the generator $s_j$ corresponds to the Brauer $n$-diagram with
edges connecting the vertices $j$ (respectively, $j+1$) on the top row
with $(j+1)^-$ (respectively, $j^-$) on the bottom row, and all other edges
are vertical, connecting the vertices $k$ and $k^-$ on the top and bottom rows for
all $k\neq i,i+1$; the generator $e_j$ corresponds to the Brauer
$n$-diagram with horizontal edges connecting the vertices $j, j+1$ (resp., $j^-, (j+1)^-$) on the
top rows (resp., bottom rows), and all other edges are vertical, connecting the
vertices $k$ and $k^-$ on the top and bottom rows for all $k\neq j,j+1$.

Let $R$ be a commutative integral domain which is an $\Z[x]$-algebra
such that $x$ is specialized to $\delta\in R$. Then both the
symmetric group algebra $R\Sym_n$ and the Brauer algebra
$\bb_n(\delta)_R$ are cellular algebras over $R$ (see \cite{Murphy:basis} and \cite{GL}). To recall their
cellular structures we need some combinatorics. A composition of $n$
is a sequence of nonnegative integer $\lam=(\lam_1,\lam_2,\cdots)$
with $\sum_{i\geq 1}\lam_i=n$. A composition
$\lam=(\lam_1,\lam_2,\cdots)$ of $n$ is said to be a partition if
$\lam_1\geq\lam_2\geq\cdots$. In this case, we write $\lam\vdash n$
and $|\lam|=n$. We use $\mathcal{P}_n$ to denote the set of all the
partitions of $n$. For any composition $\lam$ of $n$, the conjugate
of $\lam$ is defined to be a partition
$\lam'=(\lam'_1,\lam'_2,\cdots)$, where $\lam'_j:=\#\{i|\lam_i\geq
j\}$ for each $j\geq 1$. We use $\Sym_{\lam}$ to denote the standard
Young subgroup of $\Sym_n$ corresponding to $\lam$. That is $$
\Sym_{\lam}:=\Sym_{\{1,2,\cdots,\lam_1\}}\times\Sym_{\{\lam_1+1,\lam_1+2,\cdots,\lam_1+\lam_2\}}\times\cdots.
$$

Let $\lambda$ be a composition of $n$. The Young diagram of $\lam$
is defined to be the set
$$ [\lam]:=\bigl\{(i,j)\bigm|1\leq j\leq \lam_i\bigr\}.
$$
The elements of $[\lam]$ are called nodes of $\lam$. A
$\lam$-tableau is a bijection $\t:
[\lam]\rightarrow\{1,2,\cdots,n\}$. The symmetric group $\Sym_n$
acts on the set of $\lam$-tableaux from the right hand side by
letter permutations. If $\lam$ is a partition, then the conjugate of
$\t$ is define to be the $\lam'$-tableau $\t'$ such that
$\t'(i,j):=\t(j,i)$ for any $(i,j)\in[\lam']$. The $\lam$-tableau
$\t$ is row standard if $ \t(i,j)\leq\t(i,k)$ whenever $j\leq k$.
$\t$ is standard if both $\t$ and $\t'$ are row-standard. Let
$\std(\lam)$ be the set of standard $\lam$-tableaux. We denote by
$\t^{\lam}$ (respectively, $\t_{\lam}$) the standard $\lam$-tableau
in which the numbers $1,2,\cdots,n$ appear in order along successive
rows (respectively, columns). If $\t$ is a $\lam$-tableau then let
$d(\t)\in\Sym_n$ such that $\t^{\lam}d(\t)=\t$ and we shall write
$\Shape(\t)=\lam$. Note that $\Sym_{\lam}$ is the row stabilizer of
$\tlam$. We use $\mathcal{D}_{\lam}$ to denote the set of
distinguished right coset representatives of $\Sym_{\lam}$ in
$\Sym_n$. Then for any $d\in\mathcal{D}_{\lam}$ and
$w\in\Sym_{\lam}$ we have that $\ell(wd)=\ell(w)+\ell(d)$. Let
$w_{\lam}\in\Sym_n$ such that $\t^{\lam}w_{\lam}=\t_{\lam}$. Then
$w_{\lam}\in\mathcal{D}_{\lam}$.

We define
$$ X_{\lam}:=\sum_{w\in\Sym_{\lam}}w,\quad
Y_{\lam}:=\sum_{w\in\Sym_{\lam}}(-1)^{\ell(w)}w.
$$
Let $\tau$ be the $R$-algebra automorphism of $R\Sym_n$ which is
defined on generators by $\tau(s_i)=-s_i$ for any $1\leq i<n$. It is
clear that $\tau^2=\id$ and $\tau(Y_{\lam})=X_{\lam}$.

Let $\lam\vdash n$. For any $\s,\t\in\std(\lam)$, we define
$X_{\s\t}:=d(\s)^{-1}X_{\lam}d(\t)$. By a well-known result of
Murphy \cite{Murphy:basis}, the set $\{X_{\s\t}|\lam\vdash n,
\s,\t\in\std(\lam)\}$ is a basis of $R\Sym_n$. We call it the {\it
Murphy basis} of $R\Sym_n$. It is a cellular basis of $R\Sym_n$ in
the sense of \cite{GL}. Note also that the set $\{Y_{\s\t}:=d(\s)^{-1}Y_{\lam}d(\t)|\lam\vdash n,
\s,\t\in\std(\lam)\}$ is a cellular basis of $R\Sym_n$ too. We call it the {\it Y
Murphy basis} of $R\Sym_n$. For both cellular bases the cell modules (i.e., Specht modules) of $R\Sym_n$ are
labeled by the partitions in $\mathcal{P}_n$.

For any $\lambda,\mu\in\mathcal{P}_n$, we write $\lambda\unrhd\mu$
if $\sum_{j=1}^i\lambda_j\geq \sum_{j=1}^i\mu_j$ for any $i\geq 1$.
If $\lambda\unrhd\mu$ and $\lambda\neq\mu$, then we write
$\lambda\rhd\mu$. We use $(R\Sym_n)^{\unrhd\lam}$ (respectively,
$(R\Sym_n)^{\rhd\lam}$) to denote the free $R$-submodule of
$R\Sym_n$ spanned by the Murphy basis elements of the form
$X_{\u\v}$ with $\u,\v\in\std(\mu)$ and $\mu\unrhd\lam$
(respectively, $\mu\rhd\lam$). Then both $(R\Sym_n)^{\unrhd\lam}$
and $(R\Sym_n)^{\rhd\lam}$ are two-sided ideals of $R\Sym_n$.
\smallskip

We now recall the cellular structure of the Brauer algebra
$\bb_n(m)$. Let $f$ be an integer with $0\leq f\leq [n/2]$, where
$[n/2]$ is the largest non-negative integer not bigger than $n/2$.
We define $$
\mathcal{D}_{f}:=\Biggl\{d\in\Sym_n\Biggm|\begin{matrix}\text{$(2j-1)d<(2j)d$
for $1\leq j\leq f$}\\
\text{$(1)d<(3)d<\cdots<(2f-1)d$}\\
\text{$(2f+1)d<(2f+2)d<\cdots<(n)d$}\\
\end{matrix}\Biggr\}.
$$ For each $\lambda\in\pp_{n-2f}$, we denote by
$\std_{2f}(\lambda)$ the set of all the standard $\lambda$-tableaux
with entries in $\{2f+1,\cdots,n\}$. The initial tableau
$\t_f^{\lam}$ in this case has the numbers $2f+1,\cdots,n$ in order
along successive rows. Again, for each $\t\in\Std_{2f}(\lambda)$,
let $d(\t)$ be the unique element in
$\Sym_{\{2f+1,\cdots,n\}}\subseteq\Sym_n$ with
$\t_f^{\lam}d(\t)=\t$. Let $\sigma\in\Sym_{\{2f+1,\cdots,n\}}$ and
$d_1,d_2\in\mathcal{D}_{f}$. Then $d_1^{-1}e_1e_3\cdots
e_{2f-1}\sigma d_2$ corresponds to the Brauer $n$-diagram where the
top horizontal edges connect $(2i-1)d_1$ and $(2i)d_1$, the bottom
horizontal edges connect $\bigl((2i-1)d_2\bigr)^{-}$ and $\bigl((2i)d_2\bigr)^{-}$, for
$i=1,2,\cdots,f$, and the vertical edges connects $(j)d_1$ with $
\bigl((j)d_2\bigr)^-$ for $j=2f+1,2f+2,\cdots,n$.

\begin{lemma} {\rm (\cite[Corollary 3.3]{DDH})} With the above notations, the set
$$ \biggl\{d_1^{-1}e_1e_3\cdots e_{2f-1}\sigma
d_2\biggm|\text{$0\leq f\leq [n/2]$,
$\sigma\in\Sym_{\{2f+1,\cdots,n\}}$, $d_1,
d_2\in\mathcal{D}_{f}$}\biggr\}.$$ is a basis of the Brauer algebra
$\bb_{n}(x)_R$, which coincides with the natural basis given by
Brauer $n$-diagrams.
\end{lemma}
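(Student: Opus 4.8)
The plan is to check, by the diagram calculus, that each element $d_1^{-1}e_1e_3\cdots e_{2f-1}\sigma d_2$ of the displayed set equals a single Brauer $n$-diagram --- with no interior loops, hence with no factor $x^{k}$ for $k>0$ --- namely the diagram exhibited in the paragraph immediately preceding the statement, and then to verify that the resulting assignment
$$(f,d_1,\sigma,d_2)\ \longmapsto\ d_1^{-1}e_1e_3\cdots e_{2f-1}\sigma d_2$$
is a bijection from the indexing set onto the set $\Bd_n$ of all Brauer $n$-diagrams. Since $\Bd_n$ is, by construction, an $R$-basis of $\bb_n(x)_R$, both assertions of the statement then follow at once: the displayed set \emph{is} $\Bd_n$.

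For the first point, recall that $d_1^{-1}$ and $d_2$ are permutation diagrams, all of whose edges are vertical; concatenating a Brauer diagram with such a diagram on either side never produces an interior loop, since an interior loop is a cycle through identified vertices and so requires both a horizontal edge in the bottom row of the left factor and a horizontal edge in the top row of the right factor. Similarly $e_1,e_3,\dots,e_{2f-1}$ have pairwise non-adjacent indices, hence commute, and their product is the single diagram whose top (resp.\ bottom) horizontal edges are $\{2i-1,2i\}$ (resp.\ $\{(2i-1)^-,(2i)^-\}$) for $1\le i\le f$ and whose vertical edges join $k$ to $k^-$ for $2f<k\le n$; right multiplication by $\sigma\in\Sym_{\{2f+1,\dots,n\}}$, whose support is disjoint from the horizontal part, likewise creates no interior loop and merely permutes the lower ends of those vertical edges. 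Finally, multiplying on the left by $d_1^{-1}$ and on the right by $d_2$ relabels the top, respectively the bottom, row, and a direct bookkeeping --- keeping in mind that the bottom vertices are indexed by $i^-=2n+1-i$ --- identifies the outcome with precisely the Brauer $n$-diagram described before the statement; call it $D$.

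It remains to see that $(f,d_1,\sigma,d_2)\mapsto D$ is a bijection onto $\Bd_n$, which I would establish by writing down the inverse. Given $D\in\Bd_n$, let $f$ be the number of horizontal edges in the top row of $D$; this equals the number of horizontal edges in the bottom row, since each of them equals $\tfrac12\bigl(n-(\text{number of vertical edges of }D)\bigr)$, so $0\le f\le[n/2]$. The $f$ unordered pairs of top vertices joined horizontally, together with the remaining $n-2f$ top vertices, determine a \emph{unique} $d_1\in\mathcal{D}_f$: the three conditions defining $\mathcal{D}_f$ force the pairs $\{(2i-1)d_1,(2i)d_1\}$ $(1\le i\le f)$ to enumerate those horizontal pairs --- each with its smaller element first, the pairs themselves ordered by their minima --- and force $(2f+1)d_1<\cdots<(n)d_1$ to enumerate the remaining vertices in increasing order. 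Symmetrically, the bottom horizontal pairs determine a unique $d_2\in\mathcal{D}_f$, and then the matching between the non-horizontal top vertices and the non-horizontal bottom vertices of $D$ determines a unique $\sigma\in\Sym_{\{2f+1,\dots,n\}}$. These reconstructions invert the construction of the preceding paragraph, so the assignment is a bijection; as a numerical check, the same counting yields $|\mathcal{D}_f|=\binom{n}{2f}(2f-1)!!$, and indeed $\sum_{f=0}^{[n/2]}|\mathcal{D}_f|^{2}(n-2f)!=(2n-1)!!=|\Bd_n|$. Hence the displayed set coincides with $\Bd_n$, which is the natural diagram basis of $\bb_n(x)_R$. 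The only delicate part of the argument is the first step --- fixing the action and labelling conventions carefully enough to confirm both the explicit shape of $D$ and the absence of any interior-loop factor; granting that piece of diagram bookkeeping, the bijectivity is immediate from the constraints built into the definition of $\mathcal{D}_f$.
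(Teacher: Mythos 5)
Your proof is correct. The paper gives no proof of this lemma at all --- it is simply quoted from \cite[Corollary 3.3]{DDH} --- so there is nothing to compare against; your argument is the standard one and is complete: each product $d_1^{-1}e_1e_3\cdots e_{2f-1}\sigma d_2$ is a single loop-free diagram because an interior loop would need a bottom horizontal edge in the left factor \emph{and} a top horizontal edge in the right factor, and the assignment $(f,d_1,\sigma,d_2)\mapsto D$ is inverted by reading off the top and bottom horizontal pairs and the vertical matching, with the three conditions defining $\mathcal{D}_f$ forcing uniqueness of $d_1,d_2$; the check $\sum_{f}|\mathcal{D}_f|^2(n-2f)!=(2n-1)!!$ is a welcome confirmation.
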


\begin{definition} \label{dfn000} Let $D=d_1^{-1}e_1e_3\cdots e_{2f-1}\sigma d_2\in\Bd_n$, where
$0\leq f\leq [n/2]$, $\sigma\in\Sym_{\{2f+1,\cdots,n\}}$, $d_1,
d_2\in\mathcal{D}_{f}$. Then we define
$\ell(D):=\ell(d_1)+\ell(d_2)+\ell(\sigma)$ and
$\sign(D):=(-1)^f(-1)^{\ell(D)}$.
\end{definition}

\begin{remark} \label{rm} 1) We can always draw the Brauer diagram
as a ``nice diagram" (i.e., in a way such that two edges intersect
at most once and there are no self-intersections and no three edges
intersect at one point, etc, see \cite[1.1]{FG}). If $D$ is
represented by a ``nice diagram" with $f$ horizontal edges in each
row and $n(D)$ is the number of crossings of edges, then
$\sign(D)=(-1)^{f+n(D)}$. Moreover, if $D\in\Sym_n$ then $n(D)$
coincides with the length function on $\Sym_n$ which we introduced
before.

2) For any $D_1,D_2\in\Bd_n$, note that in general
$$\sign(D_1D_2)\neq\sign(D_1)\sign(D_2). $$

3) Our definition of $\sign(D)$ coincides with that of
$\varepsilon(D)$ in \cite[1.4]{Ga2}.
\end{remark}
\smallskip

Note that, however, the above basis is not a cellular basis for
$\bb_n(x)$. But if we replace the $\sigma$ in the above basis by a
Murphy basis element of $R\Sym_{\{2f+1,\cdots,n\}}$ then we will get
a cellular basis of $\bb_n(x)$. Precisely, the set $$
\Biggl\{d_1^{-1}e_1e_3\cdots
e_{2f-1}\bigl(d(\s)^{-1}X_{\lam}^{(f)}d(\t)\bigr)
d_2\Biggm|\begin{matrix}\text{$0\leq f\leq [n/2]$,
$\lam\in\pp_{n-2f}$}\\
\text{$\s,\t\in\std_{2f}(\lam)$,$d_1,
d_2\in\mathcal{D}_{f}$}\end{matrix}\Biggr\},$$ where
$X_{\lam}^{(f)}:=\sum_{w\in\Sym^{(f)}_{\lam}}w$ and
$$\Sym^{(f)}_{\lam}:=\Sym_{\{2f+1,\cdots,2f+\lam_1\}}\times\Sym_{\{2f+\lam_1+1,\cdots,2f+\lam_1+\lam_2\}}\times\cdots,
$$ is a cellular basis of the Brauer algebra $\bb_{n}(x)_R$. The cell modules of $\bb_{n}(x)_R$ are labeled by the set of pairs $(f,\lam)$, where $0\leq f\leq [n/2]$ and $\lam\vdash n-2f$. For any
two pairs $(f,\lam), (g,\mu)$ with $0\leq f,g\leq [n/2]$ and
$\lam\in\pp_{n-2f},\mu\in\pp_{n-2g}$, we define $(f,\lam)\unrhd
(g,\mu)$ if either $f>g$ or $f=g$ and $\lam\unrhd\mu$. If
$(f,\lambda)\unrhd(g,\mu)$ and $(f,\lambda)\neq(g,\mu)$, then we
write $(f,\lambda)\rhd(g,\mu)$. We use $(\bb_n(x))^{\unrhd(f,\lam)}$
(respectively, $(\bb_n(x))^{\rhd(f,\lam)}$) to denote the free
$R$-submodule of $\bb_n(x)$ spanned by the cellular basis elements
corresponding to those $(g,\mu,d_1,d_2,\s,\t)$ with
$\mu\in\pp_{n-2g}$, $d_1,d_2\in\mathcal{D}_g$, $\s,\t\in\std(\mu)$
and $(g,\mu)\unrhd(f,\lam)$ (respectively, $(g,\mu)\rhd(f,\lam)$).
Then both $(\bb_n(x))^{\unrhd(f,\lam)}$ and
$(\bb_n(x))^{\rhd(f,\lam)}$ are two-sided ideals of $\bb_n(x)$. In
particular, if we denote by $\bb_n(x)^{(f)}$ the two-sided ideal of
$\bb_n(x)$ generated by $e_1e_3\cdots e_{2f-1}$, then $$
\bb_n(x)^{(f)}=\sum_{\lam\vdash n-2f}(\bb_n(x))^{\unrhd(f,\lam)}
$$
is spanned by the cellular basis elements which it contains.
Henceforth, we shall write $\bb_n^{(f)}$ instead of $\bb_n(x)^{(f)}$
for simplicity.

The Brauer algebra $\bb_n(x)$ and its specialized version have been
studied in a number of references, e.g., \cite{Br}, \cite{B1},
\cite{B2}, \cite{CV}, \cite{CVM1}, \cite{CVM2}, \cite{DP},
\cite{DDH}, \cite{DWH}, \cite{FG}, \cite{Ga1}, \cite{Ga2},
\cite{Hu}, \cite{HX1}, \cite{Ma}, \cite{RS} and \cite{W}. In the set up of
Schur--Weyl duality for orthogonal groups, we only need certain
specialized Brauer algebras. Let $K$ be a field of characteristic
not equal to $2$. Let $m\in\N$ and $V$ an $m$-dimensional orthogonal
space over $K$. Let $\bb_n(m)_{\Z}:=\Z\otimes_{\Z[x]}\bb_n(x)$,
where $\Z$ is regarded as $\Z[x]$-algebra by specifying $x$ to $m$.
Let $\bb_n(m):=K\otimes_{\Z}\bb_n(m)_{\Z}$, where $K$ is regarded as
$\Z$-algebra in the natural way. Then there is a right action of
$\bb_n(m)$ on the $n$-tensor space $V^{\otimes n}$ which commutes
with the natural left action of $O(V)$. We recall the definition of
this action. Let $\delta_{i,j}$ denote the value of the usual
Kronecker delta. We fix an ordered basis
$\bigl\{v_1,v_2,\cdots,v_{m}\bigr\}$ of $V$ such that
$$ (v_i, v_{j})=\delta_{i, m+1-j},\quad\forall\,\,1\leq i, j\leq m.$$
The right action of $\bb_n(m)$ on $V^{\otimes n}$ is defined on
generators by
$$\begin{aligned} (v_{i_1}\otimes\cdots\otimes
v_{i_n})s_j&:=v_{i_1}\otimes\cdots\otimes v_{i_{j-1}}\otimes
v_{i_{j+1}}\otimes v_{i_{j}}\otimes v_{i_{j+2}}
\otimes\cdots\otimes v_{i_n},\\
(v_{i_1}\otimes\cdots\otimes
v_{i_n})e_j&:=\delta_{i_{j},m+1-i_{j+1}} v_{i_1}\otimes\cdots\otimes
v_{i_{j-1}}\otimes\biggl(
\sum_{k=1}^{m}v_{k}\otimes v_{m+1-k}\biggr)\\
& \qquad\qquad\otimes v_{i_{j+2}}\otimes\cdots\otimes
v_{i_n}.\end{aligned}
$$

Let $\overline{K}$ be the algebraic closure of $K$. Set
$V_{\overline{K}}:=\overline{K}\otimes_{K}V$. Then by the main
results in \cite{Br}, \cite{B1}, \cite{B2}, \cite{DP}, \cite{DH} and
\cite{We}, we know that there is a Schur--Weyl duality between
$\bb_n(m)_{\overline{K}}$ and $O(V_{\overline{K}})$ on
$V_{\overline{K}}^{\otimes n}$. In particular, we have two
surjective homomorphisms as follows: $$ \varphi_{\overline{K}}:
\bb_n(m)_{\overline{K}}\rightarrow\End_{O(V_{\overline{K}})}(V_{\overline{K}}^{\otimes
n}),\quad \psi_{\overline{K}}:
\overline{K}O(V_{\overline{K}})\rightarrow\End_{\bb_n(m)_{\overline{K}}}(V_{\overline{K}}^{\otimes
n}).
$$
Furthermore, $\varphi_{\overline{K}}$ is injective if and only if
$m\geq n$. If $m<n$ then $\dim\Ker\varphi_{\overline{K}}$ is
independent of the characteristic of the field $K$ (as long as $\ch
K\neq 2$).

\smallskip
{\it From now on until the end of this section, we assume that
$m<n$}. The main results in \cite{DH} actually implies that
$\dim_{\overline{K}}\Ker{\varphi_{\overline{K}}}=\dim_{K}\Ker{\varphi}$
and
$\Ker{\varphi_{\overline{K}}}=\overline{K}\otimes_{K}\Ker{\varphi}$
because \cite[Theorem 1.4, Theorem 6.9]{DH} gave an integral basis
for $\Ker{\varphi_{\overline{K}}}$. In particular, $\dim\Ker\varphi$
is independent of the characteristic of the field $K$ (as long as
$\ch K\neq 2$). In the following sections we shall sometimes use
$\Ann_{\mathfrak{B}_n(m)}\bigl(V^{\otimes n}\bigr)$ to denote the
annihilator of $V^{\otimes n}$ in $\bb_n(m)$. By definition,
$\Ann_{\mathfrak{B}_n(m)}(V^{\otimes n})=\Ker\varphi$.\medskip

\section{The annihilator of $n$-tensor space}\label{xxsec3}

In this section, we shall prove that the annihilator of $V^{\otimes n}$ in
$\bb_n(m)$ is equal to the two-sided ideal generated by
$E_0, E_1,\dots,E_{[(m+1)/2]}$. This generalizes the earlier
result (for the case $\ch K=0$) of Lehrer--Zhang \cite[Theorem
6.1]{LZ2}.\medskip

We first recall a definition and a result given in \cite{LZ2}.

\begin{definition} {\rm (\cite[Lemma 4.1]{LZ2})} \label{lzdfn1} Let $S=(i_1,\cdots,i_N), S'=(j_1,\cdots,j_N)$ be two $N$-tuples of integers such that $\{i_1,\cdots,i_N\}, \{j_1,\cdots,j_N\}$ are two disjoint subsets of $\{1,2,\cdots,2n\}$. Let $\beta$ be any pairing of the vertices $\bigl\{1,2,\cdots,2n\bigr\}\setminus\{i_1,\cdots,i_N,j_1,\cdots,j_N\}$. For $w\in\mathfrak{S}_N$, let $D_{w}(S,S',\beta)$ be the Brauer diagram with edges $\bigl\{(i_k,j_{\pi(k)})\bigm|k=1,2,\cdots,N\bigr\}\sqcup\beta$. We define $$
b(S,S',\beta):=\sum_{w\in\mathfrak{S}_N}\sign(w)D_{w}(S,S',\beta)\in\bb_n(m).
$$
\end{definition}

\begin{lemma} {\rm (\cite[(4.5)]{LZ2})} \label{lzdfn} Let $S_{a,b}:=(1,2,\cdots,a,(a+1)^-,(a+2)^-,\cdots,(a+b)^-)$,
$S'_{a,b}:=(a+1,a+2,\cdots,a+b,1^-,2^-,\cdots,a^-)$, $\beta_{a+b}$ be the pairing $(a+b+1,(a+b+1)^-)$, $(a+b+2,(a+b+2)^-),\cdots,(n,n^-)$.
Then we have that $$
E_{a,b}=b(S_{a,b}, S'_{a,b}, \beta_{a+b}).
$$
\end{lemma}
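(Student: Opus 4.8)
The plan is to prove Lemma~\ref{lzdfn} by directly unwinding both sides in terms of the concatenation-free diagrammatic description of the relevant Brauer $n$-diagrams, and checking that the signed sums agree term by term. Recall from Definition~\ref{keydfn} that a diagram $D\in\Bd(a,b)$ is uniquely determined by how it matches the vertices in $\{1,2,\dots,a,(a+1)^-,\dots,(a+b)^-\}$ (as a source set) with the vertices in $\{1^-,2^-,\dots,a^-,a+1,\dots,a+b\}$ (as a target set), since conditions (1) and (2) force every remaining vertex $s$ with $a+b+1\le s\le n$ to be joined to $s^-$. Writing $S_{a,b}$ and $S'_{a,b}$ as in the statement of Lemma~\ref{lzdfn}, both are $N$-tuples with $N=a+b$ whose underlying sets are exactly these source and target sets, and they are disjoint subsets of $\{1,\dots,2n\}$; moreover $\beta_{a+b}$ is precisely the pairing of the complementary $2(n-a-b)$ vertices dictated by condition (1). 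Hence the map $w\mapsto D_w(S_{a,b},S'_{a,b},\beta_{a+b})$ is a bijection from $\Sym_{a+b}$ onto $\Bd(a,b)$, and the set of diagrams appearing in $E_{a,b}$ coincides with the set appearing in $b(S_{a,b},S'_{a,b},\beta_{a+b})$.

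It then remains to show the coefficients match: for each $w\in\Sym_{a+b}$ we must verify $\sign(D_w(S_{a,b},S'_{a,b},\beta_{a+b}))=\sign(w)$, where $\sign(w)=(-1)^{\ell(w)}$ is the usual sign character and $\sign(D)=(-1)^f(-1)^{\ell(D)}$ is as in Definition~\ref{dfn000}. The idea is to use part~1 of Remark~\ref{rm}: represent $D:=D_w(S_{a,b},S'_{a,b},\beta_{a+b})$ by a ``nice diagram'' so that $\sign(D)=(-1)^{f+n(D)}$ where $f$ is the number of horizontal edges in one row and $n(D)$ the number of crossings. For this particular family of diagrams, observe that the $a$ vertices $1,\dots,a$ on the top row and the $b$ vertices $(a+1)^-,\dots,(a+b)^-$ on the bottom row are all joined to vertices in the ``cross'' target set $\{1^-,\dots,a^-,a+1,\dots,a+b\}$; the $a$ top-to-bottom strands among these contribute crossings governed by a permutation, while the top-to-top and bottom-to-bottom strands (which account for the horizontal edges) contribute the remaining crossings. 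One writes $D$ in the standard normal form $d_1^{-1}e_1e_3\cdots e_{2f-1}\sigma d_2$ of Definition~\ref{dfn000} and reads off $f$, $\ell(d_1)$, $\ell(d_2)$, $\ell(\sigma)$ as explicit functions of the combinatorics of $w$; alternatively, and more cleanly, one deforms continuously from the identity diagram $D_{\id}$ and tracks how $\ell(D)+f$ changes mod $2$ as one builds up $w$ from adjacent transpositions, showing each such elementary move changes $(-1)^{\ell(D)+f}$ by exactly $-1$, which is precisely the behaviour of $\sign(w)$.

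The step I expect to be the genuine obstacle is the parity bookkeeping in the previous paragraph: the number of horizontal edges $f$ and the quantity $\ell(D)$ both depend on $w$ in a way that is not individually additive under multiplication by transpositions (this is exactly the content of part~2 of Remark~\ref{rm}, that $\sign$ is not multiplicative), so one must show that the \emph{combination} $f+n(D)\pmod 2$ is well-behaved. The cleanest route is probably to fix a single convenient nice-diagram representative of $D_w(S_{a,b},S'_{a,b},\beta_{a+b})$ — e.g.\ draw the complementary vertical strands $\beta_{a+b}$ with no crossings at all, and draw the $a+b$ ``active'' strands in a rectangular region where the only crossings come from realizing the permutation $w$ by a reduced word — and then argue that in this representative $f$ is a fixed constant (namely $\min(a,b)$ together with a contribution from fixed points of the induced matching, but in any case independent of the bookkeeping) plus a correction whose parity exactly cancels the discrepancy between $n(D)$ and $\ell(w)$. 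Once the representative is pinned down this reduces to a finite check of how a single adjacent transposition in the source or target tuple changes the picture, which is the ``simple counting'' the footnote in the excerpt alludes to. With the bijection and the sign-matching in hand, $E_{a,b}=b(S_{a,b},S'_{a,b},\beta_{a+b})$ follows immediately by comparing the two signed sums.
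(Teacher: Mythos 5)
This lemma is not proved in the paper: it is quoted from \cite[(4.5)]{LZ2}, and the footnote attached to Definition \ref{keydfn} merely records that the equivalence with Lehrer--Zhang's formulation follows ``from a simple counting'' done in that reference, so there is no argument in the paper to compare against. Your first step --- that $w\mapsto D_w(S_{a,b},S'_{a,b},\beta_{a+b})$ is a bijection from $\Sym_{a+b}$ onto $\Bd(a,b)$, since conditions (1) and (2) of Definition \ref{keydfn} force all remaining vertices onto $\beta_{a+b}$ and leave a free perfect matching of the two $(a+b)$-element sets --- is correct and cleanly stated.

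The sign-matching step, where all the content lies, you never actually carry out, and there are two concrete gaps in your plan. First, the deformation argument you sketch would at best yield $\sign(D_w)=\sign(D_{\id})\cdot(-1)^{\ell(w)}$, so you still must pin down the base constant $\sign(D_{\id})$, and you do not. This is not innocuous: with $S_{a,b}$ and $S'_{a,b}$ ordered exactly as written, take $a=b=1$, $n=2$; then $D_{\id}$ has edges $(1,2)$ and $(2^-,1^-)$, i.e.\ $D_{\id}=e_1$, whose sign under Definition \ref{dfn000} is $(-1)^1(-1)^0=-1$. In general $\sign(D_{\id})=(-1)^{ab}$ with these orderings, because reordering $S'_{a,b}$ to the form $(1^-,\dots,a^-,a+1,\dots,a+b)$ (for which $D_{\id}$ is the identity diagram) costs $ab$ inversions. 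So the identity as literally stated holds only up to the factor $(-1)^{ab}$ --- harmless for every use the paper makes of it, since $b(S,S',\beta)$ is only well defined up to a sign depending on the chosen orderings, but your proof should surface this rather than silently assume the base case equals $+1$. Second, your interior assertion that ``$f$ is a fixed constant'' as $w$ varies is false: already for $a=b=1$ one has $f(D_{\id})=1$ but $f(D_{s_1})=0$. That $f$ jumps is precisely the reason Remark \ref{rm}(2) warns that $\sign$ is not multiplicative, and the whole point of the ``simple counting'' is to check, case by case (a transposition turning a vertical--vertical pair into a horizontal--horizontal pair changes $f$ by one, one that keeps the pair vertical--vertical or horizontal--horizontal changes the crossing number by $\pm1$, and so on), that the combination $f+n(D)$ always flips parity. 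That finite verification, together with the base-case sign, is exactly what is missing from your write-up.
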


The advantage of the above alternative description of $E_{a,b}$ lies in that the sign $\sign(w)$ before $D_{w}(S,S',\beta)$ depends only on $w$ which is more easier to be handled than the sign $\sign(D_{w}(S,S',\beta))$. More precisely, up to a sign, $b(S,S',\beta)$ depends only on $\beta$ and the two subsets $\{i_1,\cdots,i_N\}$, $\{j_1,\cdots,j_N\}$ but not on the orderings on these two subsets.
\medskip

For any $h\in\bb_n(m)$, we use $\<h\>$ to denote the two-sided ideal
of $\mathfrak{B}_n(m)$ generated by $h$. For any finite set $S$, we use $|S|$ to denote the cardinality of $S$.

\begin{lemma} \label{lm00} Let $a,b\in\N$ such that $1\leq a+b\leq n$. Then there exist two elements $w_1, w_2\in
\Sym_{a+b}$ such that
$E_{b,a}=\pm w_1E_{a,b}w_2$. In particular,
$\<E_{a,b}\>=\<E_{b,a}\>$.
\end{lemma}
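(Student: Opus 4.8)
The plan is to use the alternative description of $E_{a,b}$ given in Lemma \ref{lzdfn}, namely $E_{a,b}=b(S_{a,b},S'_{a,b},\beta_{a+b})$, and exploit the fact highlighted in the paragraph following it: up to sign, $b(S,S',\beta)$ depends only on $\beta$ and on the two \emph{sets} underlying $S$ and $S'$, not on their orderings. First I would observe that $E_{b,a}=b(S_{b,a},S'_{b,a},\beta_{a+b})$, where $S_{b,a}=(1,\dots,b,(b+1)^-,\dots,(a+b)^-)$ and $S'_{b,a}=(b+1,\dots,a+b,1^-,\dots,b^-)$, and that $\beta_{a+b}$ is literally the same pairing in both cases. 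So the only difference between $E_{a,b}$ and $E_{b,a}$ is which of the two "moving" sets of $a+b$ vertices sits among the top-left/bottom-right positions versus the top-right/bottom-left positions. Since $E_{a,b}$ involves the sets $\{1,\dots,a\}\cup\{(a+1)^-,\dots,(a+b)^-\}$ and $\{a+1,\dots,a+b\}\cup\{1^-,\dots,a^-\}$, while $E_{b,a}$ swaps the roles of the "first $a$" and "last $b$" blocks in the top row and correspondingly in the bottom row.

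Next I would introduce the permutation $w\in\Sym_{a+b}$ (viewed inside $\Sym_n$ via $\Sym_{\{1,\dots,a+b\}}\subseteq\Sym_n$) that cyclically shifts the block $\{1,\dots,a+b\}$ so as to send $\{1,\dots,a\}$ onto $\{b+1,\dots,a+b\}$ and $\{a+1,\dots,a+b\}$ onto $\{1,\dots,b\}$; concretely $w$ is the permutation with $i\mapsto i+b$ for $1\le i\le a$ and $i\mapsto i-a$ for $a+1\le i\le a+b$. Left multiplication by $w$ relabels the top-row vertices of every diagram in $\Bd(a,b)$, and right multiplication by $w^{-1}$ (or by the analogous permutation on the bottom row, which for Brauer diagrams is again implemented by a symmetric-group element acting on the right) relabels the bottom-row vertices; after this relabelling a diagram of $\Bd(a,b)$ becomes a diagram of $\Bd(b,a)$, and conversely, so $w\,\Bd(a,b)\,w^{-1}=\Bd(b,a)$ as sets. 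It remains only to track how the coefficient $\sign(D)$ transforms. Here the clean route is to pass through the $b(S,S',\beta)$ description: since $w$ permutes the entries of the tuples $S_{a,b},S'_{a,b}$ and $\beta_{a+b}$ is fixed, conjugating $E_{a,b}=\sum_{u\in\Sym_{a+b}}\sign(u)D_u(S_{a,b},S'_{a,b},\beta_{a+b})$ by $w$ just reindexes the sum over $\Sym_{a+b}$ and, because each $\sign(u)$ there depends only on the abstract permutation $u$ and not on the diagram, the whole expression is multiplied by a single global sign $\pm1$ (the sign picked up from reordering $S_{a,b}$ into $S_{b,a}$). This yields $wE_{a,b}w^{-1}=\pm E_{b,a}$, hence $E_{b,a}=\pm w_1E_{a,b}w_2$ with $w_1=w$, $w_2=w^{-1}\in\Sym_{a+b}$, and since $w_1,w_2$ are units in $\bb_n(m)$ the equality $\<E_{a,b}\>=\<E_{b,a}\>$ follows immediately.

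The main obstacle I anticipate is the bookkeeping of signs: the naive conjugation identity $\sign(wDw^{-1})=\sign(D)$ is \emph{false} for Brauer diagrams (Remark \ref{rm}(2) warns that $\sign$ is not multiplicative), so one cannot argue diagram-by-diagram with the Definition \ref{dfn000} sign. The point is to do the sign computation at the level of the $b(S,S',\beta)$ formula, where the only sign present is $\sign(w)$ for $w$ a genuine symmetric-group element, and to check that relabelling by $w$ converts the summation variable $u\mapsto$ (something of the form $u$ pre/post-composed with a fixed permutation), so the new sum is $\pm\sum_u\sign(u)D_u(S_{b,a},S'_{b,a},\beta_{a+b})$; the overall $\pm$ is then just the parity of the permutation of the index set $\{1,\dots,a+b\}$ needed to carry the tuple $S_{a,b}$ to $S_{b,a}$ (a shuffle of an $a$-block past a $b$-block, contributing $(-1)^{ab}$), and similarly on the primed side, and these combine to a definite sign which I would record but whose exact value is irrelevant for the ideal statement. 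Once this is set up carefully the rest is routine.
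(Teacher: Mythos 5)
Your proposal is correct and takes essentially the same approach as the paper: both exploit the $b(S,S',\beta)$ description from Lemma \ref{lzdfn} to see that $E_{b,a}$ arises from $E_{a,b}$ by conjugation with the block-swap permutation in $\Sym_{\{1,\dots,a+b\}}$, with the sign handled at the level of $\sign(u)$ for genuine symmetric-group elements rather than $\sign(D)$ for Brauer diagrams. (A cosmetic point: with the paper's convention that $s_j$ has edges $\{j,(j+1)^-\},\{j+1,j^-\}$, the relevant identity is $w^{-1}E_{a,b}w=\pm E_{b,a}$ rather than $wE_{a,b}w^{-1}$; this swap of $w$ and $w^{-1}$ is immaterial to the lemma's conclusion.)
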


\begin{proof} This is clear by Lemma \ref{lzdfn}. In fact, we can take $w_1$ to be the Brauer
$n$-diagram which has $$\begin{aligned}
&\{1,(a+1)^-\},\hspace{6pt} \{2,(a+2)^-\},\cdots,\{b,(a+b)^-\},\\
&\{b+1,1^-\},\hspace{6pt} \{b+2,2^-\},\cdots,\{b+a,a^-\},\\
&\{r,r^-\},\hspace{10pt}\text{for all}\,\,r\in\{a+b+1,a+b+2,\cdots,n\},
\end{aligned}$$
as its (vertical) edges; and $w_2$ to be the Brauer
$n$-diagram which has $$\begin{aligned}
&\{1,(b+1)^-\},\hspace{6pt} \{2,(b+2)^-\},\cdots,\{a,(b+a)^-\},\\
&\{a+1,1^-\},\hspace{6pt} \{a+2,2^-\},\cdots,\{a+b,b^-\},\\
&\{r,r^-\},\hspace{10pt}\text{for all}\,\,r\in\{a+b+1,a+b+2,\cdots,n\},
\end{aligned}$$
as its vertical edges.
\end{proof}

The proof of the next lemma uses the original definition of $E_{a,b}$.

\begin{lemma}\label{xx3.1} For any positive integers $a, b$ with $1\leq a+b\leq n$, we have that $E_{a,b}\in
\langle E_{a,b-1}\rangle\cap\langle E_{a-1,b}\rangle$.
\end{lemma}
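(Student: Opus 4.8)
The plan is to deduce both containments from a single one using the symmetry of Lemma \ref{lm00}, and to prove that one by a Laplace-type (row) expansion of the ``generalized determinant'' presentation $E_{a,b}=b(S_{a,b},S'_{a,b},\beta_{a+b})$ supplied by Lemma \ref{lzdfn}. First I would record the reduction: by Lemma \ref{lm00} there are $w_1,w_2\in\Sym_{a+b}$ with $E_{b,a}=\pm w_1E_{a,b}w_2$, so that $E_{a,b}\in\langle E_{b,a}\rangle$, and also $\langle E_{b,a-1}\rangle=\langle E_{a-1,b}\rangle$. Hence it suffices to prove the single assertion
\[
(\ast)\qquad E_{c,d}\in\langle E_{c,d-1}\rangle\quad\text{for all }c\in\N,\ d\geq1\text{ with }c+d\leq n.
\]
Indeed, applying $(\ast)$ to $(c,d)=(a,b)$ gives $E_{a,b}\in\langle E_{a,b-1}\rangle$, while applying it to $(c,d)=(b,a)$ gives $E_{b,a}\in\langle E_{b,a-1}\rangle=\langle E_{a-1,b}\rangle$ and therefore $E_{a,b}\in\langle E_{b,a}\rangle\subseteq\langle E_{a-1,b}\rangle$.

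To prove $(\ast)$ I would argue as follows. Since $b(S,S',\beta)$ is, up to a sign, independent of the orderings of its two index sets (the remark after Definition \ref{lzdfn1}), reorder so that $S_{c,d}$ has last entry $(c+d)^-$ with first $c+d-1$ entries $S_{c,d-1}$, and replace $S'_{c,d}$ by $\widetilde S'=(c+1,\dots,c+d-1,1^-,\dots,c^-,c+d)$ whose first $c+d-1$ entries are $S'_{c,d-1}$. Writing $\Sym_{c+d}=\bigsqcup_{j=1}^{c+d}c_j\Sym_{c+d-1}$, with $\Sym_{c+d-1}$ the stabiliser of the last position and $c_j$ the distinguished representative sending it to position $j$ (so $\sign(c_j)=(-1)^{c+d-j}$), and grouping the sum $\sum_{w}\sign(w)D_w(S_{c,d},\widetilde S',\beta_{c+d})$ by the value $j=w(c+d)$ — equivalently, by the edge incident to the vertex $(c+d)^-$ — one obtains $E_{c,d}=\pm\sum_{j}(-1)^{c+d-j}\Theta_j$, where $\Theta_j$ is the extra edge $\{(c+d)^-,\widetilde S'_j\}$ adjoined to $b\bigl(S_{c,d-1},\widetilde S'\setminus\{\widetilde S'_j\},\beta_{c+d}\bigr)$. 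The block $j=c+d$ is exactly $E_{c,d-1}$: there the extra edge is the vertical strand $\{c+d,(c+d)^-\}$, which upgrades $\beta_{c+d}$ to $\beta_{c+d-1}$ (this block reflects the inclusion $\Bd(c,d-1)\subseteq\Bd(c,d)$, compatibly with signs by Remark \ref{rm}). For $j<c+d$ the vertex $\widetilde S'_j$ lies in $S'_{c,d-1}$ and $\widetilde S'\setminus\{\widetilde S'_j\}=(S'_{c,d-1}\setminus\{\widetilde S'_j\})\cup\{c+d\}$; I would then show $\Theta_j=\pm g_j E_{c,d-1}h_j$ for suitable Brauer diagrams $g_j,h_j$ — when $\widetilde S'_j$ is a top vertex, $h_j$ may be taken as the transposition exchanging it with $c+d$ on the bottom row, and when $\widetilde S'_j\in\{1^-,\dots,c^-\}$ one uses instead a diagram carrying one horizontal edge, which may produce an interior loop (a factor $m\cdot 1_K$). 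Each $\Theta_j$ thus lies in $\langle E_{c,d-1}\rangle$, giving $(\ast)$.

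The hard part will be the last step: verifying that every block $\Theta_j$ with $j<c+d$ is a genuine two-sided multiple of $E_{c,d-1}$. Relocating the top vertex $c+d$ into a position formerly occupied by a bottom vertex (or the reverse) cannot be realised by a permutation, so the relevant diagram composition may create an interior loop or collapse a horizontal edge; moreover the diagram-sign $\sign(D_w)$ need not equal $\sign(w)$ (cf.\ Remark \ref{rm}(2)). Keeping track of these loop factors and signs through the compositions, and confirming that the ``shifted minor'' $b\bigl(S_{c,d-1},\widetilde S'\setminus\{\widetilde S'_j\},\beta_{c+d}\bigr)$ really is $E_{c,d-1}$ up to an explicit diagram, is the technical core of the argument.
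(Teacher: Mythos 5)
Your overall strategy is the same as the paper's: reduce both containments to the single inclusion $E_{c,d}\in\langle E_{c,d-1}\rangle$ via the symmetry in Lemma \ref{lm00}, and then obtain that inclusion by a Laplace-type expansion of the diagrammatic minor over the edge emanating from one distinguished vertex. (The paper expands on the vertex $a+b\in S'_{a,b}$; you expand on $(c+d)^-\in S_{c,d}$; this is a transposed but equivalent choice.) The identification of the ``principal'' block with $E_{c,d-1}$ itself is also the same observation in both arguments. Where you differ cosmetically is that you organize the expansion through the $b(S,S',\beta)$ presentation of Lemma \ref{lzdfn}, while the paper works from the original definition in terms of $\Bd(a,b)$; the former has the mild advantage that the sign attached to each summand is a pure permutation sign $\sign(w)$, which is exactly the feature the paper advertises after Lemma \ref{lzdfn}.

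However, as you yourself flag, the proof stops short at precisely the step that carries all the content. You assert $\Theta_j=\pm g_jE_{c,d-1}h_j$ for ``suitable'' Brauer diagrams but do not produce $g_j,h_j$ explicitly, do not show that the product generates no interior loop (and hence no spurious factor $m\cdot1_K$, which would be fatal when $\ch K\mid m$), and do not verify that the signs on the two sides agree term by term. This is not a cosmetic omission: Remark \ref{rm}(2) makes explicit that $\sign$ is \emph{not} multiplicative on $\Bd_n$, so the equality of signed sums cannot be taken for granted. In fact one can check that no loop ever forms here — the relevant strand $\{c+d,(c+d)^-\}$ in each summand of $E_{c,d-1}$ blocks the only potential closed cycle — so your worry about a factor $m$ is unfounded; but the sign matching is genuinely the crux. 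The paper resolves it by a geometric ``nice diagram'' argument: concatenating the auxiliary diagram $d$ with $D''\in\Bd(a,b-1)$ and tracking how eliminating the one self-intersection changes the crossing number in pairs, establishing $\sign(dD'')=\sign(d)\sign(D'')$ in the relevant configuration (equation (\ref{2sign})), and then arguing symmetrically for the permutation multiplier $d'$. To close your version you would need an analogous verification — for instance, showing directly that left/right multiplication by the explicit transposition or horizontal-edge diagram carries the $b(S,S',\beta)$ expansion of $E_{c,d-1}$ onto $\Theta_j$ with a uniform sign, in the spirit of Lemma \ref{xx3.3}. As written, the proposal correctly identifies the route but leaves its hardest step unproved.
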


\begin{proof} For each $k\in\{1,2,\cdots,a,(a+1)^-,\cdots,(a+b)^-\}$, we use $\Bd(k;a,b)$ to denote the subset of
the Brauer diagrams in $\Bd(a,b)$ which have the edge $\{k,a+b\}$.

If $k=i\in\{1,2,\cdots,a\}$ then we use $d$ to denote the Brauer
$n$-diagram which has $\{k,a+b\}$ and $\{k^-,(a+b)^-\}$ as its only
horizontal edges and
$$
\{r,r^-\},\hspace{10pt}\text{for all}\,\,r\in\{1,2,\cdots,n\}\backslash
\{k,a+b\},
$$
as its vertical edges. It is clear that $\sign(d)=-1$. By the
concatenation rule of Brauer diagrams, it is easy to see that $$
\bigl\{D\bigm|D\in\Bd(k;a,b)\bigr\}=\bigl\{dD''\bigm|D''\in\Bd(a,b-1)\bigr\}.
$$
We claim that $$
\sum_{D\in\Bd(k;a,b)}\sign(D)D=-dE_{a,b-1}\in\langle
E_{a,b-1}\rangle.
$$
To prove this claim, it suffices to show that for each
$D''\in\Bd(a,b-1)$,
\begin{equation}\label{2sign}\sign(dD'')=\sign(d)\sign(D'')=-\sign(D'').
\end{equation}

Note that when concatenating a ``nice diagram" for $d$ with a ``nice diagram" for $D''$ and transforming it into a ``nice diagram" for $dD''$, the
only transformation which can possibly change the parity of $\ell(d)+1+\ell(D'')+f$ (where $2f$ is the number of horizontal edges of $D''$) is for the following type of edge which was drawn in red color
(where $D''\in\Bd(a,b-1)$):
\medskip
\begin{center}
\scalebox{0.5}[0.5]{\includegraphics{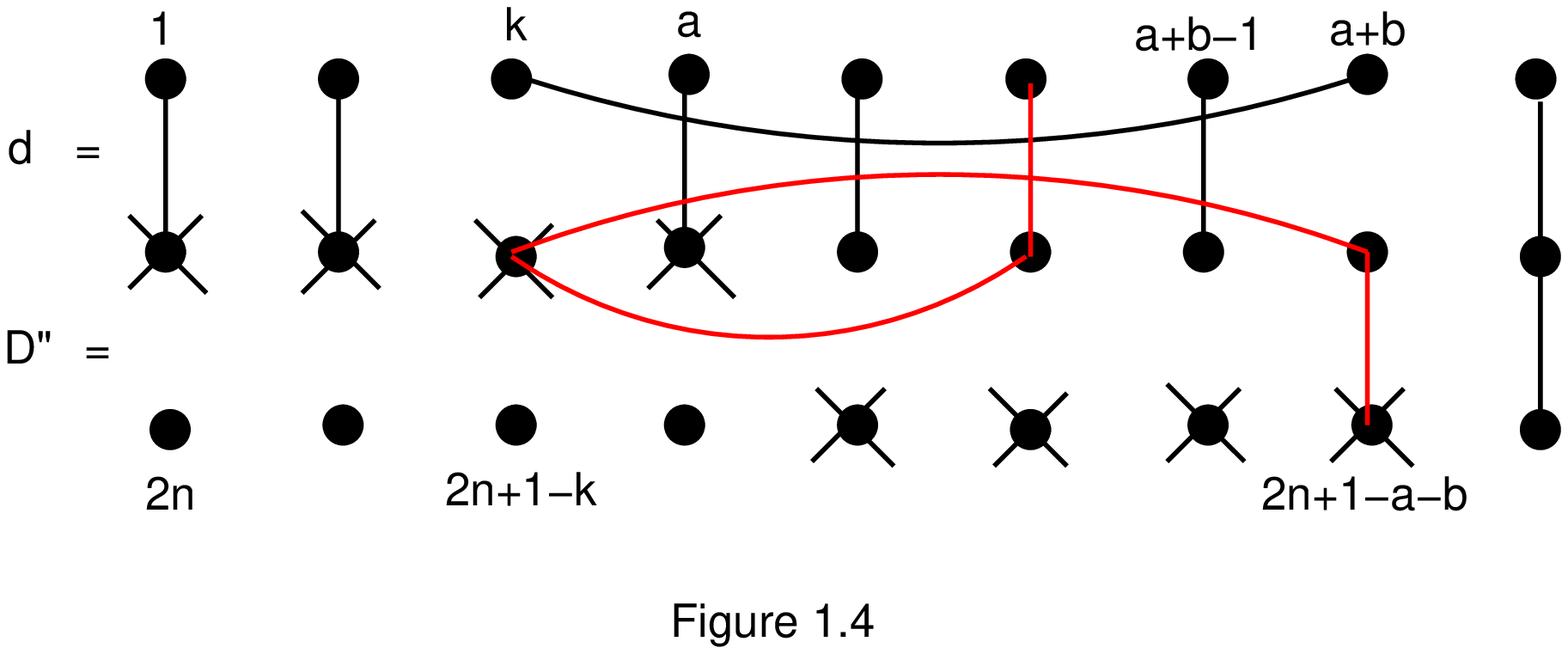}}
\end{center}
\medskip
That is, we need to eliminate the self-intersection in the following picture.
\medskip
\begin{center}
\scalebox{0.28}[0.28]{\includegraphics{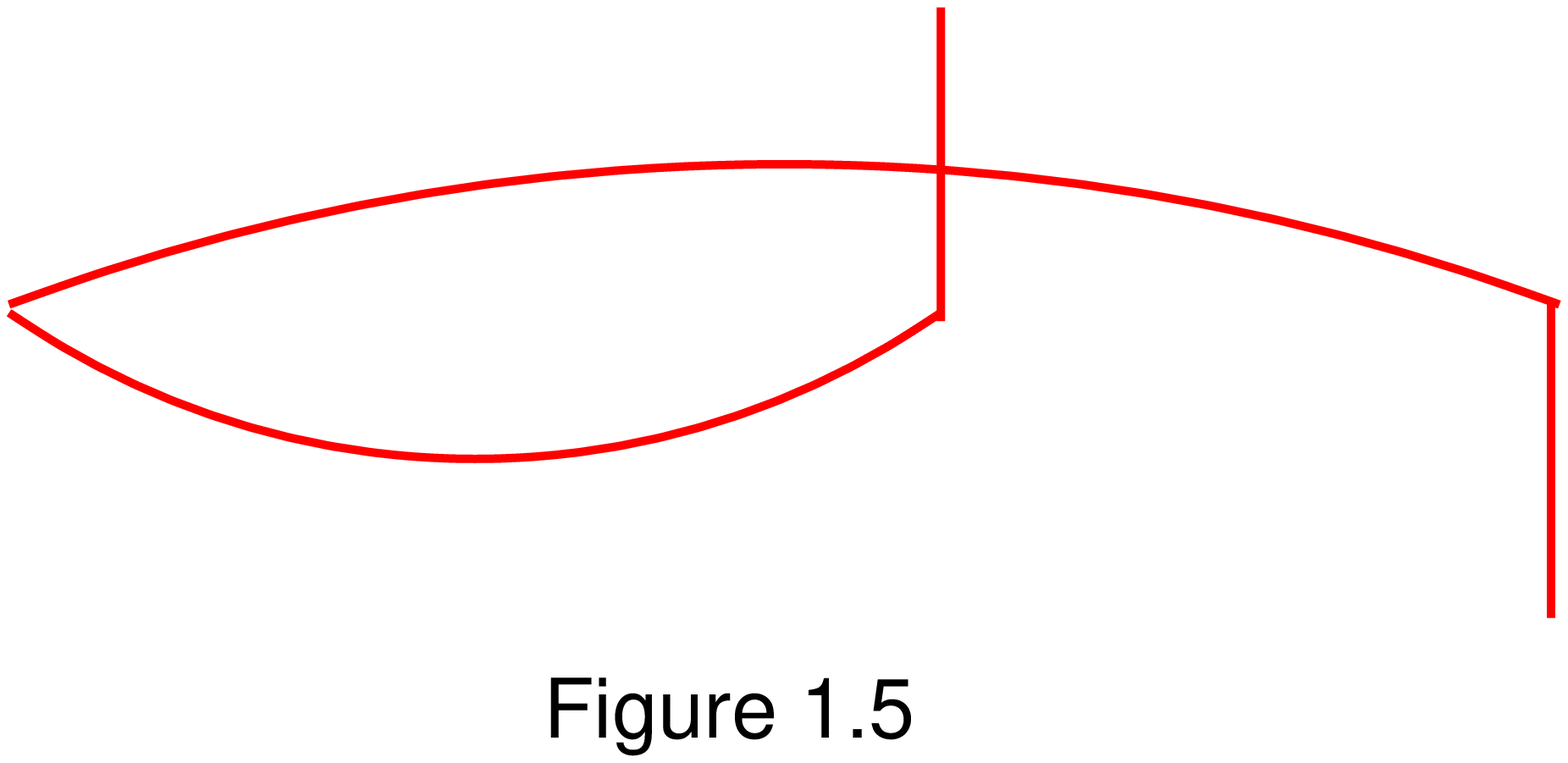}}
\end{center}
\medskip
However, by eliminating the above self-intersection and making it into an edge in a ``nice diagram" for $dD''$
has the effect of removing one horizontal edge on the top rows of $D''$ together with eliminating
$2k-1$ crossing on this concatenation diagram for some $k\in\N$. To be more precise, when we eliminate the self-intersection in Figure 1.5, 
the immediate effect is that we will remove one top horizontal edge of $D''$ as well as one crossing from the concatenation diagram. However, there are possibly some more crossings which will be removed. These are the crossings arising from the vertices inside the area circled
by the edge in Figure 1.5. If a vertex $\gamma$ inside the area connects with another vertex which is also inside the circled area then
these two interior vertices will contribute two crossings with the red line which will finally be eliminated; otherwise
$\gamma$ itself will connect with two different vertices outside the circled area and hence will produce two crossings with
the red line which will finally be eliminated. To sum all, the sign finally remains unchanged. This proves (\ref{2sign}). \smallskip

If $k=i^-\in\{(a+1)^-,(a+2)^-,\cdots,(a+b)^-\}$ then we use $d'$ to denote the Brauer $n$-diagram which has
$$\begin{aligned}
&\{i,(i+1)^-\},\hspace{6pt} \{i+1,(i+2)^-\},\cdots,\{a+b-1,(a+b)^-\},\hspace{6pt}\{a+b,i^-\},\\
&\{r,r^-\},\hspace{10pt}\text{for all}\,\,r\in\{1,2,\cdots,i-1\}\sqcup \{a+b+1,a+b+2,\cdots,n\},
\end{aligned}$$
as its (vertical) edges. It is clear that $\sign(D_2)=(-1)^{a+b-i}$. Then by a similar argument as in the case $k=i$, we can deduce that
$$
\sum_{D\in\Bd(k;a,b)}\sign(D)D=(-1)^{a+b-i}E_{a,b-1}d'\in\langle
E_{a,b-1}\rangle.
$$
Therefore, we have that
$$
E_{a,b}=\sum_{k\in\{1,2,\cdots,a,(a+1)^-,\cdots,(a+b)^-\}}\sum_{D\in\Bd(k;a,b)}\sign(D)D\in\langle
E_{a,b-1}\rangle.
$$
This proves $E_{a,b}\in \langle E_{a,b-1}\rangle$. It remains to
prove that $E_{a,b}\in \langle E_{a-1,b}\rangle$.

Exchanging the roles of $a$ and $b$ and using Lemma \ref{lm00}, we see that $$
\<E_{a,b}\>=\<E_{b,a}\>\subseteq\<E_{b,a-1}\>=\<E_{a-1,b}\>.
$$
as required. This completes the proof of the lemma.
\end{proof}

Note that if $a\geq 1$ and $b=0$ (respectively, if $a=0$ and $b\geq 1$) then, by the theory of
symmetric group, we have that $E_{a,0}\in\<E_{a-1,0}\>$ (respectively, $E_{0,b}\in\<E_{0,b-1}\>$).
\medskip

Let $A^{(1)}, A^{(2)}, A^{(3)}, A^{(4)}$ be four subsets of indices such that \begin{enumerate}
\item $A^{(i)}\cap A^{(j)}=\emptyset$ for any $1\leq i\neq j\leq 4$; and
\item $A^{(1)}\sqcup A^{(3)}\subseteq \{1,2,\cdots,n\}$, $A^{(2)}\sqcup A^{(4)}\subseteq \{1^{-},2^{-},\cdots,n^{-}\}$; and
\item $|A^{(1)}|+|A^{(2)}|=|A^{(3)}|+|A^{(4)}|$.
\end{enumerate}
Recall that $i^{-}=2n+1-i$ for each $1\leq i\leq 2n$. We set $n_0:=|A^{(1)}|+|A^{(2)}|$, and
$$
\bigl\{a_1,a_2,\cdots,a_{2n-2n_0}\bigr\}:=\bigl\{1,2,\cdots,2n\bigr\}\setminus \bigsqcup_{k=1}^4 A^{(k)}.
$$
Let $(i_1,j_1,i_2,j_2,\cdots,i_{n-n_0},j_{n-n_0})$ be a fixed
permutation of $\{a_1,\cdots,a_{2n-2n_0}\}$. Set
$$\begin{aligned}
& \mathbf{i}:=(i_1,i_2,\cdots,i_{n-n_0}),\hspace{10pt} \mathbf{j}:=(j_1,j_2,\cdots,j_{n-n_0}).\\
& a_{11}:=|A^{(1)}|,\quad a_{12}:=|A^{(2)}|.
\end{aligned}
$$
Let $\beta_{\mathbf{i},\mathbf{j}}$ be the pairing $(i_1,j_1),(i_2,j_2),\cdots,(i_{n-n_0},j_{n-n_0})$. We fix an ordering on $A^{(1)}\sqcup A^{(2)}$ and an ordering on $A^{(3)}\sqcup A^{(4)}$ respectively. We define $S_A, S'_A$ to be the corresponding $n_0$-tuples with respect to the two orderings. As we said before, for different choices of orderings, $b(S_A,S'_A,\beta_{\mathbf{i},\mathbf{j}})$ differs only by a sign.

\begin{lemma}\label{xx3.3}
With notations as above, we have that $$
\sum_{w\in\mathfrak{S}_{n_0}}b(S_A,S'_{A},\beta_{\mathbf{i},\mathbf{j}})\in\langle E_{a_{11},a_{12}}\rangle=\langle E_{a_{12},a_{11}}\rangle.
$$
\end{lemma}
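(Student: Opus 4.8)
The plan is to reduce the statement to a single diagrammatic identity
$$b(S_A,S'_A,\beta_{\mathbf i,\mathbf j})=\pm\, D_1\,E_{a_{11},a_{12}}\,D_2$$
for suitable Brauer $n$-diagrams $D_1,D_2$; since $\langle E_{a_{11},a_{12}}\rangle$ is a two-sided ideal this yields the membership at once. First, the asserted equality $\langle E_{a_{11},a_{12}}\rangle=\langle E_{a_{12},a_{11}}\rangle$ is exactly Lemma~\ref{lm00}, so only one containment is at issue; and since a two-sided ideal is in particular an additive subgroup, it suffices to show that the single element $b(S_A,S'_A,\beta_{\mathbf i,\mathbf j})$ lies in $\langle E_{a_{11},a_{12}}\rangle$ (the displayed sum over $w\in\Sym_{n_0}$ being an integer multiple of it). By Lemma~\ref{lzdfn} we may write $E_{a_{11},a_{12}}=b(S_{a_{11},a_{12}},S'_{a_{11},a_{12}},\beta_{a_{11}+a_{12}})$, so the task is to transport this ``standard'' triple to the arbitrary triple $(S_A,S'_A,\beta_{\mathbf i,\mathbf j})$ by left and right multiplication by Brauer diagrams.

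The diagram $D_1$ is built to act on the top row: it carries the vertices of $A^{(1)}$ (the top part of $S_A$) and of $A^{(3)}$ (the top part of $S'_A$) onto the top vertices of $S_{a_{11},a_{12}}$ and $S'_{a_{11},a_{12}}$, permutes the remaining top vertices, and carries whatever cap edges of $\beta_{\mathbf i,\mathbf j}$ lie in the top row; symmetrically $D_2$ handles $A^{(2)},A^{(4)}$ and the cup edges of $\beta_{\mathbf i,\mathbf j}$ in the bottom row, the top-to-bottom edges of $\beta_{\mathbf i,\mathbf j}$ being accounted for by $D_1$ and $D_2$ jointly. At this point one invokes the observation recorded after Lemma~\ref{lzdfn}: $b(S,S',\beta)$ depends, up to an overall sign, only on the two underlying sets and on $\beta$, not on the chosen orderings; this makes the transport well defined and lets us absorb all sign discrepancies into the ``$\pm$''. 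When $\beta_{\mathbf i,\mathbf j}$ is entirely top-to-bottom and the parts of $S_A,S'_A$ have the same top/bottom splits as $S_{a_{11},a_{12}},S'_{a_{11},a_{12}}$, the diagrams $D_1,D_2$ can be taken to be permutation Brauer diagrams and the identity is immediate; in general one must in addition insert a bounded number of $e$-type diagrams, e.g.\ of the kind occurring in a product $E_{a,b}\cdot e_{a+b}$, each of which simultaneously shifts a top/bottom split by one and produces one cap or one cup, the requisite caps and cups of $\beta_{\mathbf i,\mathbf j}$ being produced this way. Because we begin with $E_{a_{11},a_{12}}$ itself, the resulting element is automatically in $\langle E_{a_{11},a_{12}}\rangle$, so there is no danger of landing in the ``wrong'' ideal.

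The crux --- and the step I expect to be the main obstacle --- is the no-loop bookkeeping: $D_1$ and $D_2$ must be chosen so that in every concatenation $D_1\circ D$ and $D\circ D_2$, with $D$ running over the Brauer diagrams in the expansion of $E_{a_{11},a_{12}}$, no interior loop is created. Concretely, each cap of $D_1$ must meet two \emph{distinct} through-strands of $D$, never a cup of $D$ and never the two ends of a single edge of $D$, and dually for the cups of $D_2$. This matters precisely because the paper works over an arbitrary field $K$ with $\ch K\neq 2$ in which $m\cdot 1_K$ may vanish (namely when $\ch K\mid m$): a single interior loop would contribute a factor $m\cdot 1_K$, and a diagram carrying such a factor cannot reproduce $b(S_A,S'_A,\beta_{\mathbf i,\mathbf j})$. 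Once a loop-free choice is made, concatenation merely re-routes edges and, at the level of the antisymmetrised sums, puts the $n_0!$ diagrams of $E_{a_{11},a_{12}}$ into sign-compatible bijection with the $n_0!$ diagrams of $b(S_A,S'_A,\beta_{\mathbf i,\mathbf j})$, yielding the identity. I would carry out this verification by exhibiting $D_1,D_2$ explicitly --- in the spirit of the diagrams written down in Lemma~\ref{lm00} and in the proof of Lemma~\ref{xx3.1} --- and repeating there the ``nice diagram'' self-intersection and crossing analysis already used in the proof of Lemma~\ref{xx3.1}.
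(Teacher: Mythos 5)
Your high-level strategy coincides with the paper's: express $b(S_A,S'_A,\beta_{\mathbf i,\mathbf j})$ as $\pm\,D_1 E_{a_{11},a_{12}} D_2$ for suitable Brauer diagrams $D_1,D_2$, and then note that membership in the ideal is immediate (the paper writes $b(S_A,S'_A,\beta_{\mathbf i,\mathbf j})=\pm\,\sigma_1^{-1}D'_1\,E_{a_{11},a_{12}}\,D'_2\,\sigma_2^{-1}$). You also correctly isolate the real obstruction for an arbitrary field --- one must not create an interior loop, since $m\cdot 1_K$ may vanish --- which is exactly the point the paper's construction is designed to handle. Where your proposal diverges is in \emph{how} the pair $D_1,D_2$ is to be built. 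The paper's construction factors $D_1,D_2$ into two pieces with structurally disjoint supports: first a pure permutation $\sigma_i^{-1}\in\Sym_n$ that re-labels the active vertices $A^{(1)}\sqcup A^{(3)}$ and $A^{(2)}\sqcup A^{(4)}$ to $\{1,\dots,n_0\}$ and $\{1^-,\dots,n_0^-\}$, and then a Brauer diagram $D'_i$ whose caps and cups lie entirely among the vertices $\{n_0+1,\dots,n\}$ --- precisely where every term of $E_{a_{11},a_{12}}$ is a through-strand $(k,k^-)$. The no-loop property you call the ``crux'' is then automatic: permutations never create loops, and the caps/cups of $D'_i$ can only meet through-strands of $E_{a_{11},a_{12}}$, never cups or caps. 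Your suggestion of inserting ``$e$-type diagrams of the kind occurring in $E_{a,b}\cdot e_{a+b}$'' is the weak point: $e_{a+b}$ touches the \emph{active} vertex $a+b$, so it mixes with the antisymmetrised part of $E_{a,b}$ rather than staying on the identity part, and the alleged ``shifting of top/bottom splits'' is not needed once one notes that the active parts of $b(S_A,S'_A,\beta_{\mathbf i,\mathbf j})$ and of $E_{a_{11},a_{12}}$ already have matching splits in the setup the paper uses (with $a_{11}=|A^{(1)}|$, $a_{12}=|A^{(2)}|$ and the assumed shape of $S_A,S'_A$). So the proposal is not wrong in direction, but it leaves the decisive construction unperformed and gestures toward a route ($e$-insertions acting near active vertices) that would require its own loop analysis rather than inheriting it for free, as the paper's factorisation does.
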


\begin{proof} Assume that $$\begin{aligned}
& S_A=(q_1,\cdots,q_{a_{11}},p_{a_{11}+1}^-,\cdots,p_{a_{11}+a_{12}}^-),\\
& S'_A=(q_{a_{11}+1},\cdots,q_{a_{11}+a_{12}},p_1^-,\cdots,p_{a_{11}}^-),\\
& \{q_{n_0+1},\cdots,q_n\}=\{i_1,\cdots,i_{n-n_0},j_1,\cdots,j_{n-n_0}\}\cap\{1,2,\cdots,n\},\\
& \{p_{n_0+1}^-,\cdots,p_n^-\}=\{i_1,\cdots,i_{n-n_0},j_1,\cdots,j_{n-n_0}\}\cap\{1^-,2^-,\cdots,n^-\}.
\end{aligned}$$

We use $\sigma_1$ to denote the Brauer diagram which has the following edges $$\begin{aligned}
& \{k, q_k^-\},\quad\,\text{for $1\leq k\leq n_0=a_{11}+a_{12}$},\\
& \{n_0+l, q_{n_0+l}^-\},\quad\,\text{for $1\leq l\leq n-n_0$,}
\end{aligned}
$$
and use $\sigma_2$ to denote the Brauer diagram which has the following edges $$\begin{aligned}
& \{p_k, k^-\},\quad\,\text{for $1\leq k\leq n_0=a_{11}+a_{12}$},\\
& \{p_{n_0+l},(n_0+l)^-\},\quad\,\text{for $1\leq l\leq n-n_0$}.
\end{aligned}
$$
Then $\sigma_1, \sigma_2$ are both elements in the symmetric group $\mathfrak{S}_n$.

The pairing $\beta_{\mathbf{i},\mathbf{j}}$ and the elements $\sigma_1, \sigma_2$ determine a pairing $\beta$ on the set of vertices $\{n_0+1,n_0+2,\cdots,n,(n_0+1)^-,(n_0+2)^-,\cdots,n^-\}$, and hence a Brauer $(n-n_0)$-diagram $D$. Since the number of top horizontal edges of $D$ is the same as the number of the bottom horizontal edges of $D$, we can clearly write $D=D_1D_0D_2$ such that $D_1,D_0,D_2\in\Bd_{n-n_0}$ and
$D_0$ contains only the vertical edges of the form $(k,k^-)$ with $n_0+1\leq k\leq n$. We extend the Brauer diagrams $D_1,D_2\in\Bd_{n-n_0}$ to be Brauer diagrams $D'_1,D'_2\in\Bd_n$ by adding the vertical edges $(k,k^-)$ with $1\leq k\leq n_0$ to their left-hand sides.

Then it follows directly from the definition in (\ref{lzdfn}) that $$
\sigma_1b(S_A,S'_{A},\beta_{\mathbf{i},\mathbf{j}})\sigma_2=\pm D'_1{E}_{a_{11},a_{12}}D'_2.
$$

It follows (by Lemma \ref{lm00}) that $$
b(S_A,S'_{A},\beta_{\mathbf{i},\mathbf{j}})=\pm \sigma_1^{-1}D'_1{E}_{a_{11},a_{12}}D'_2\sigma_2^{-1}\in\langle E_{a_{11},a_{12}}\rangle=\langle E_{a_{12},a_{11}}\rangle,
$$
as required. Hence we complete the proof of the lemma.
\end{proof}

For the sake of simplicity, we shall abbreviate the partition
$(\underbrace{a,\cdots,a}_{\text{$k$ copies}})$ as $(a^k)$.

\begin{definition} {\rm (\cite[Theorem 1.4]{DH})} \label{xx3.4} We set
$$\begin{aligned} (2\mathcal{P}_n)'&:=\bigl\{\tilde{\lambda}:=(\lambda_1,\lambda_1,\lambda_2,\lambda_2,\cdots)\vdash 2n\bigm|\lambda=(\lambda_1,\lambda_2,\cdots)\in \mathcal{P}_n\bigr\},\\
T_m&:=\bigl\{(\nu,\mathfrak{t})\bigm|\mathfrak{t}\in\Std(\tilde{\nu}), (m+1,1^{n-m-1})\unlhd \nu\in \mathcal{P}_n\bigr\}.
\end{aligned} $$
\end{definition}

Now we are in a position to state the main result of this section.

\begin{theorem}\label{xx3.6}
Let $K$ be an arbitrary field of characteristic other than two. If $n>m$, then
$$
\Ann_{\mathfrak{B}_n(m)}\left(V^{\otimes n}\right)=\left\langle E_0,E_1,\cdots,E_{[\frac{m+1}{2}]} \right\rangle.
$$
\end{theorem}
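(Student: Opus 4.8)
The plan is to establish the two inclusions separately. The inclusion $\langle E_0,\dots,E_{[(m+1)/2]}\rangle\subseteq\Ann_{\mathfrak{B}_n(m)}(V^{\otimes n})$ is the easy direction: by Theorem \ref{LZ} we already know $E_i\in\Ker\varphi$ for $0\le i\le[(m+1)/2]$, and $\Ker\varphi$ is a two-sided ideal, so nothing more is needed. The substance is the reverse inclusion $\Ker\varphi\subseteq\langle E_0,\dots,E_{[(m+1)/2]}\rangle$. Here the strategy is to invoke the integral basis of $\Ker\varphi$ given in \cite[Theorem 1.4, Theorem 6.9]{DH}. By Definition \ref{xx3.4} that basis is indexed by $T_m$, i.e.\ by pairs $(\nu,\mathfrak{t})$ with $\mathfrak{t}\in\Std(\tilde\nu)$ and $(m+1,1^{n-m-1})\unlhd\nu\in\mathcal{P}_n$. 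So it suffices to show that each of these basis elements lies in $\langle E_0,\dots,E_{[(m+1)/2]}\rangle$.

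The key point is that the \cite{DH} basis elements are (up to the passage from diagrams to the cellular/Murphy-type form) built out of diagrammatic minors of order $m+1$, which are exactly alternating sums of Brauer diagrams of the shape handled by the elements $b(S,S',\beta)$ of Definition \ref{lzdfn1}. So first I would recall precisely the form of a \cite{DH} basis element: it is an antisymmetrization over a symmetric group $\mathfrak{S}_{n_0}$ (with $n_0\ge m+1$, since $\nu\unrhd(m+1,1^{n-m-1})$ forces at least $m+1$ "defect" vertices to be permuted) of a Brauer diagram with a fixed residual pairing $\beta$ on the remaining vertices, matching up two $n_0$-subsets, one of which, say $A^{(1)}\sqcup A^{(2)}$, lies partly in the top and partly in the bottom row, and the other, $A^{(3)}\sqcup A^{(4)}$, similarly. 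This is literally the data $(A^{(1)},A^{(2)},A^{(3)},A^{(4)},\beta_{\mathbf{i},\mathbf{j}})$ preceding Lemma \ref{xx3.3}. Then Lemma \ref{xx3.3} says exactly that such an antisymmetrized expression lies in $\langle E_{a_{11},a_{12}}\rangle$, where $a_{11}+a_{12}=n_0\ge m+1$. Finally, Lemma \ref{xx3.1} (together with the symmetric-group remark following it, and Lemma \ref{lm00}) shows that for any $a,b$ with $a+b\ge m+1$ one has $\langle E_{a,b}\rangle\subseteq\langle E_{a',b'}\rangle$ for suitable $(a',b')$ with $a'+b'=m+1$, by repeatedly peeling off one node at a time; and $E_{a',b'}=E_{a'}$ with $0\le a'\le m+1$, and $\langle E_{a'}\rangle=\langle E_{m+1-a'}\rangle$ by Lemma \ref{lm00}, so we may assume $a'\le[(m+1)/2]$. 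Stringing these together gives that every \cite{DH} basis element lies in $\langle E_0,\dots,E_{[(m+1)/2]}\rangle$, hence $\Ker\varphi$ does.

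The one genuinely delicate step — and the place where I expect the real work to be — is matching the \cite{DH} integral basis elements, which are presented in a Murphy/cellular normal form adapted to $\mathfrak{B}_n(x)$ and to the $\mathfrak{S}_{2n}$-module structure on $\Ker\varphi$, with the "raw" antisymmetrized diagram sums $\sum_{w\in\mathfrak{S}_{n_0}}b(S_A,S'_A,\beta_{\mathbf{i},\mathbf{j}})$ appearing in Lemma \ref{xx3.3}. One has to check that the $\mathfrak{S}_{2n}$-action which carries the "standard" diagrammatic minors of \cite[Theorem 1.4]{DH} into the spanning set stays inside $\langle E_0,\dots,E_{[(m+1)/2]}\rangle$ — equivalently, that conjugating or translating an $E_i$ by symmetric-group elements (which is what $\sigma_1,\sigma_2,D_1',D_2'$ in the proof of Lemma \ref{xx3.3} accomplish) does not leave the ideal, which is automatic since it is two-sided, but one must be careful that the residual pairing $\beta_{\mathbf{i},\mathbf{j}}$ really can be absorbed as in the proof of Lemma \ref{xx3.3} (the factorization $D=D_1D_0D_2$ with $D_0$ purely vertical). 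A secondary bookkeeping issue is the sign matching: Remark \ref{rm}(2) warns that $\sign$ is not multiplicative on diagrams, so one must rely on the sign-rigidity of $b(S,S',\beta)$ — that up to an overall sign it depends only on $\beta$ and the two subsets — rather than on naive sign arithmetic, exactly as flagged in the paragraph after Lemma \ref{lzdfn}.
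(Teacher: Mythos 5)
Your proposal is correct and follows essentially the same route as the paper's proof: reduce to the Doty--Hu integral basis $\{Y_{\nu,\mathfrak{t}}\}$ indexed by $T_m$, identify each basis element with an antisymmetrized diagram sum $\sum(\pm\, b(S_A,S'_A,\beta_{\mathbf{i},\mathbf{j}}))$, place each such term in $\langle E_{|A^{(1)}|,|A^{(2)}|}\rangle$ via Lemma~\ref{xx3.3} (with $|A^{(1)}|+|A^{(2)}|=\nu_1\ge m+1$), and then descend via Lemma~\ref{xx3.1}, the symmetric-group remark, and Lemma~\ref{lm00} to $\langle E_0,\dots,E_{[(m+1)/2]}\rangle$. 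You also correctly flag the one delicate identification step (matching the cellular/Murphy normal form of the DH basis with the $b(S,S',\beta)$ presentation via the $\Sym_{2n}$-action), which the paper handles by directly quoting the construction of \cite[\S 6]{DH}.
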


\begin{proof}
By \cite[Theorem 1.4 and Theorem 6.9]{DH}, we know that $\Ann_{\mathfrak{B}_n(m)}\left(V^{\otimes n}\right)$ has
a basis consisting of elements of the form $Y_{\nu,\mathfrak{t}}$, where $(\nu,\mathfrak{t})\in T_m$. It remains to show (by the first statament of Theorem \ref{LZ}) that each $Y_{\nu,\mathfrak{t}}$ belongs the two-sided ideal generated by $E_0,E_1, \cdots,E_{[\frac{m+1}{2}]}$.

Let $(\nu,\mathfrak{t})\in T_m$. By the definition (see \cite[\S6]{DH})
$$
Y_{\nu,\mathfrak{t}}=(-1)^{\ell(d(\mathfrak{t}))}Y_{\nu}\ast d(\t)=(-1)^{\ell(d(\mathfrak{t}))}\sum_{\mathbf{i},\mathbf{j}}(\pm b(S_A,S'_{A},\beta_{\mathbf{i},\mathbf{j}})),
$$
where $``\ast"$ denotes the permutation action of $\mathfrak{S}_{2n}$ on $\bb_n(m)$ (see \cite[Section 6]{DH}), and
$$\begin{aligned}
A^{(1)}:&=\{(i)d(\mathfrak{t})\ |\ i=1,2,3,\cdots,\nu_1\}\cap \{1,2,\cdots,n\},\\
A^{(2)}:&=\{(i)d(\mathfrak{t})\ |\ i=1,2,3,\cdots,\nu_1\}\cap \{1^{-},2^{-},\cdots,n^{-}\},\\
A^{(3)}:&=\{(i^-)d(\mathfrak{t})\ |\ i=1,2,3,\cdots,\nu_1\}\cap \{1,2,\cdots,n\},\\
A^{(4)}:&=\{(i^-)d(\mathfrak{t})\ |\ i=1,2,3,\cdots,\nu_1\}\cap \{1^{-},2^-,\cdots,n^{-}\},\\
\end{aligned}$$
with $|A^{(1)}|+|A^{(2)}|=|A^{(3)}|+|A^{(4)}|=\nu_1$, $S_A, S'_A$ are defined by using certain prefixed ordering on the sets $A^{(1)}\sqcup A^{(2)}$,
$A^{(3)}\sqcup A^{(4)}$respectively, and
$$
\mathbf{i}:=(i_1,i_2,\cdots,i_{n-\nu_1}),\hspace{10pt} \mathbf{j}:=(j_1,j_2,\cdots,j_{n-\nu_1})
$$
such that $(i_1,j_1,i_2,j_2,\cdots,i_{n-\nu_1},j_{n-\nu_1})$ runs over a subset of permutations of
the integers in $\{1,2,\cdots,2n\}\setminus \bigsqcup_{k=1}^4 A^{(k)}$.

By Lemma \ref{xx3.3}, we obtain
$$
b(S_A,S'_{A},\beta_{\mathbf{i},\mathbf{j}})\in
\left\langle E_{|A^{(1)}|,|A^{(2)}|} \right\rangle.
$$
Note that the condition $(\nu,\mathfrak{t})\in T_m$ implies that $|A^{(1)}|+|A^{(2)}|=\nu_1\geq m+1$.
It follows from Lemma \ref{xx3.1} that $b(S_A,S'_{A},\beta_{\mathbf{i},\mathbf{j}})$
belongs to the two-sided ideal generated by $E_0,E_1,\cdots,E_{m+1}$.

On the other hand, it is clear that for any integer $[\frac{m+1}{2}]+1\leq i\leq m+1$, there exists $\sigma'_1, \sigma'_2\in\Sym_n$, such that $E_{i}=\pm \sigma'_1 E_{m+1-i}\sigma'_2\in \langle E_{m+1-i} \rangle$. As a consequence, we get that $$
b(S_A,S'_{A},\beta_{\mathbf{i},\mathbf{j}})\in\<E_0, E_1,\cdots, E_{[(m+1)/2]}\>,
$$
as required. This completes the proof of the theorem.
\end{proof}

\bigskip\bigskip

\section{Proof of Theorem \ref{conj}}

In this section we shall give the main result of this paper. That is, the proof of Theorem \ref{conj}.\smallskip

We shall proceed the proof in three steps. The first step is to prove a statement (Theorem \ref{mainthm}) about identification between certain two-sided ideals in the symmetric group algebra $K\Sym_n$. To this end, we need to recall the seminormal basis (\cite{Ho}, \cite{DJ2}, \cite{M:gendeg}) of the symmetric group algebra. We shall follow the approach in \cite{M:gendeg}. Note that \cite{M:gendeg} only consider the seminormal basis of the
(cyclotomic) Hecke algebra with $q\neq 1$. The symmetric group case
(i.e., $q=1$) is similar and may be proved using the same arguments.
The only real difference between the cases $q\neq 1$ and $q=1$ is
the choice of content function: if $q\neq 1$ then
$\cont_\v(k)=\xi^{c-r}$, when $\v(r,c)=k$, and if $q=1$ then,
instead, $\cont_\v(k)=c-r$. Analogous minor `logarithmic'
adjustments are required in the argument below when $q=1$.\smallskip

Set $L_1:=0$ and define $L_{i+1}:=s_iL_is_i+s_i$ for
$i=1,\cdots,n-1$. The elements $L_1,\cdots,L_n$ are called the
Jucys--Murphy operators of the symmetric group $\Sym_n$. Let
$\lam\vdash n$ and $\t\in\Std(\lam)$. For any integer $1\leq k\leq
n$, we define $\cont_{\t}(k)=j-i$ if $k$ appears in row $i$ and
column $j$ in $\t$. Let $$
\mathcal{R}(k):=\bigl\{d\bigm|\text{$|d|<k$ and $d\neq 0$ if
$k=2,3$}\bigr\},
$$
which is the complete set of possible contents $\cont_{\t}(k)$ as $\t$
runs over the set of standard tableaux.

\begin{definition} {\rm (\cite[Definition 2.4]{M:gendeg})} Let $\lam\vdash n$ and $\s,\t\in\Std(\lam)$. \begin{enumerate}
\item[(i)] Let $F_{\t}:=\prod\limits_{k=1}^n\prod\limits_{\substack{c\in\mathcal{R}(k)\\ \cont_{\t}(k)\neq c}}\dfrac{L_k-c}{\cont_{\t}(k)-c}$.
\item[(ii)] Let $f_{\s\t}:=F_{\s}X_{\s\t}F_{\t}$.
\end{enumerate}
\end{definition}

Let $\lam\vdash n$ and $\t\in\Std(\lam)$. For each integer $1\leq
k\leq n$ we use $\t_{k}$ to denote the subtableau of $\t$ which
contains $\{1,2,\cdots,k\}$. If $\gamma=(i,j)\in[\lam]$ such that
$[\lam]\setminus\{\gamma\}$ is again the Young diagram of a
partition $\mu$. Then we call $\gamma$ a removable node of $\lam$
and an addable node of $\mu$. For any two nodes $\gamma=(i,j),
\gamma'=(i',j')$ we say that $\gamma$ is below $\gamma'$, or
$\gamma'$ is above $\gamma$ if $i>i'$.

\begin{definition} {\rm (\cite[(2.8)]{M:gendeg}\footnote{We remark that there is a typos in
\cite[Page 704, Line 9]{M:gendeg}, $y\prec x$ there should be replaced by $y\succ x$, cf.
\cite[3.15]{JM:cyc-Schaper}.}, \cite[3.15]{JM:cyc-Schaper})} Let $\lam\vdash n$ and $\t\in\Std(\lam)$.
For $k=1,\cdots,n$, let $\Add_\t(k)$ be the set of addable nodes of the partition $\Shape({\t}_k)$ which are \textit{below} $\t^{-1}(k)$. Similarly, let $\Rem_\t(k)$ be the set of removable nodes of $\Shape({\t}_k)$ which are \textit{below} $\t^{-1}(k)$. Now define
  $$\gamma_\t=\prod_{k=1}^n\dfrac{\prod_{\alpha\in\Add_\t(k)}
                      \bigl(\cont_\t(k)-\cont(\alpha)\bigr)}
                  {\prod_{\rho\in\Rem_\t(k)}
                   \bigl(\cont_\t(k)-\cont(\rho)\bigr)}\quad\in\Q,
  $$
and $\tilde{f}_{\s\t}:=\gamma_{\t}^{-1}f_{\s\t}$ for any $\s\in\Std(\lam)$.
\end{definition}

\begin{lemma}\label{lm01} {\rm (\cite[(2.9)]{M:gendeg})} Let $\lam\vdash n$. Then $$
\gamma_{\tlam}=[\lam]!:=\prod_{i\geq 1}\lam_i!\,.
$$
\end{lemma}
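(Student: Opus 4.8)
The plan is to evaluate $\gamma_{\tlam}$ by a direct computation that exploits the very simple structure of the initial tableau $\tlam$, in which $1,2,\dots,n$ are filled in along successive rows. First I would record the two basic facts: if the entry $k$ of $\tlam$ occupies the node $(i,j)$, then $\cont_{\tlam}(k)=j-i$, and the subtableau $\tlam_k$ consisting of the entries $1,\dots,k$ has shape $\mu^{(k)}=(\lam_1,\dots,\lam_{i-1},j)$, which is a partition with exactly $i$ nonempty rows (note $j\geq 1$ always).

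Next I would pin down $\Add_{\tlam}(k)$ and $\Rem_{\tlam}(k)$ for this $\mu^{(k)}$. Since every row of $\mu^{(k)}$ strictly below row $i$ is empty while row $i$ is nonempty, the only addable node of $\mu^{(k)}$ lying strictly below $(i,j)$ is $(i+1,1)$, whose content is $1-(i+1)=-i$. On the other hand, a removable node must sit at the end of a nonempty row of $\mu^{(k)}$; but rows $1,\dots,i-1$ are not below row $i$, rows $i+1,i+2,\dots$ are empty, and the removable corner $(i,j)$ at the end of row $i$ is not strictly below itself, so $\Rem_{\tlam}(k)=\emptyset$.

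Plugging this into the product defining $\gamma_{\tlam}$, the $k$-th factor has empty (hence trivial) denominator and numerator $\cont_{\tlam}(k)-\cont(i+1,1)=(j-i)-(-i)=j$; that is, the $k$-th factor equals the column index of $k$ in $\tlam$. Finally I would group the product $\gamma_{\tlam}=\prod_{k=1}^n(\text{column of }k)$ according to the row occupied by $k$: the entries of row $i$ of $\tlam$ occupy columns $1,2,\dots,\lam_i$, so their joint contribution is $\lam_i!$, and therefore $\gamma_{\tlam}=\prod_{i\geq 1}\lam_i!=[\lam]!$, as claimed.

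I do not anticipate a genuine obstacle here; the statement reduces to this short piece of bookkeeping once the addable and removable nodes are enumerated correctly. The only point needing a little care is the convention that ``$\gamma$ is below $\gamma'$'' means the row index of $\gamma$ is \emph{strictly} larger than that of $\gamma'$ --- this is exactly what excludes the removable corner $(i,j)$ (killing the denominator) and leaves the single addable node $(i+1,1)$ in the numerator.
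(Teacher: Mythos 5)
Your computation is correct, and the small example $\lambda=(2,1)$ checks out against it ($1\cdot 2\cdot 1 = 2!\cdot 1!$). Note, though, that the paper itself offers no proof of this lemma at all --- it is stated with a bare citation to \cite[(2.9)]{M:gendeg}. So there is no internal argument to compare against. Your proposal supplies the missing elementary verification, and it does so cleanly: the crux is exactly the observation that for the row-reading tableau $\t^\lambda$, the only addable node of $\Shape(\t^\lambda_k)$ lying strictly below the position of $k$ is $(i+1,1)$ (always present since row $i$ is nonempty), while no removable node lies strictly below that position, forcing the $k$-th factor to be $(j-i)-(-i)=j$, the column of $k$. Grouping by rows then gives $\prod_i \lambda_i!$. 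This is both shorter and more self-contained than deferring to the inductive setup in Mathas, and it relies only on the definition of $\gamma_\t$ and the ``strictly below'' convention that you correctly flag as the one delicate point. The only minor comment is that your claim ``$\Rem_{\t^\lambda}(k)=\emptyset$'' is worth phrasing carefully: $(i,j)$ itself \emph{is} a removable corner of $\Shape(\t^\lambda_k)$, but it is excluded because it is not \emph{strictly} below itself --- you do say this, and it is precisely right.
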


\begin{theorem} {\rm (\cite[(2.14), (2.15)]{M:gendeg})} \begin{enumerate}
\item[(1)] $\bigl\{\tilde{f}_{\s\t}\bigm|\s,\t\in\Std(\lam),\lam\vdash n\bigr\}$ is a basis of matrix units in $\Q\Sym_n$.
\item[(2)] Let $\lam\vdash n$ and $\t\in\Std(\lam)$, then $F_{\t}=f_{\t\t}/\gamma_\t$ and $F_{\t}$ is a primitive idempotent in $\Q\Sym_n$ with $S^{\lam}\cong F_{\t}\Q\Sym_n$.
\item[(3)] For any $\lam\vdash n$ let $F_{\lam}:=\sum_{\t\in\Std(\lam)}F_{\t}$. Then $F_{\lam}$ is a primitive central idempotent in $\Q\Sym_n$.
\item[(4)] $\bigl\{F_{\lam}\bigm|\lam\vdash n\bigr\}$ is a complete set of primitive central idempotent in $\Q\Sym_n$ and $$
    1=\sum_{\lam\vdash n}F_{\lam}=\sum_{\text{$\t$ standard}}F_{\t}.
    $$
\end{enumerate}
\end{theorem}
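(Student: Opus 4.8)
The plan is to derive the theorem from the split semisimplicity of $\Q\Sym_n$ together with Young's seminormal form for the Specht modules. First I would record the standard facts that $\Q\Sym_n$ is split semisimple with $\{S^\lam\mid\lam\vdash n\}$ a complete set of pairwise non-isomorphic irreducibles, and that the Jucys--Murphy elements $L_1,\dots,L_n$ pairwise commute (an easy induction from the recursion $L_{i+1}=s_iL_is_i+s_i$ and the braid and commutation relations), so that $\mathcal L:=\Q[L_1,\dots,L_n]$ is a commutative subalgebra. Since $\Sym_n\downarrow\Sym_{n-1}$ is multiplicity-free, iterating the branching rule decomposes each $S^\lam$ canonically as $\bigoplus_{\t\in\Std(\lam)}S^\lam_\t$ into one-dimensional $\mathcal L$-weight spaces indexed by chains of partitions, i.e., by standard $\lam$-tableaux; and an induction on $k$, carried out alongside the explicit description of Young's seminormal action of $s_{k-1}$, shows that $L_k$ acts on $S^\lam_\t$ by the scalar $\cont_\t(k)$.

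The next ingredient is the combinatorial lemma that distinct standard tableaux have distinct content sequences $\bigl(\cont_\t(1),\dots,\cont_\t(n)\bigr)$, which is a short induction on $n$. Granting this, the Lagrange interpolation element $F_\t$ acts as the identity on $S^\lam_\t$ when $\t\in\Std(\lam)$ and as $0$ on every other weight space $S^\mu_\s$; hence the $F_\t$ are pairwise orthogonal idempotents, and $\sum_{\s}F_\s=1$ since the $L_k$ are simultaneously diagonalisable on the regular module $\Q\Sym_n$ with precisely these eigenvalue systems. This is (4); then (3) follows because $F_\lam=\sum_{\t\in\Std(\lam)}F_\t$ acts as $1$ on $S^\lam$ and as $0$ on $S^\mu$ for $\mu\neq\lam$, so it is the block idempotent of $S^\lam$. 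Idempotency in (2) is immediate, $F_\t$ is primitive because $F_\t\Q\Sym_n F_\t\cong\End_{\Q\Sym_n}(F_\t\Q\Sym_n)$ is one-dimensional, and as an indecomposable summand of the isotypic component $F_\lam\Q\Sym_n$ it satisfies $F_\t\Q\Sym_n\cong S^\lam$ with $\lam=\Shape(\t)$.

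For (1) and the identity $F_\t=f_{\t\t}/\gamma_\t$, the key observation is that $F_\s\Q\Sym_n F_\t$ is at most one-dimensional for every pair $\s,\t$, being non-zero precisely when $\Shape(\s)=\Shape(\t)=:\lam$, in which case it is the $(\s,\t)$-entry line of the matrix block of $S^\lam$. Hence $f_{\s\t}=F_\s X_{\s\t}F_\t$ is a scalar multiple of a generator of that line, and it is non-zero because the Murphy element $X_{\s\t}=d(\s)^{-1}X_\lam d(\t)$ has leading term $X_\lam$ with respect to the dominance filtration, so its image in the $S^\lam$-block does not vanish; concretely one computes $F_{\tlam}X_\lam F_{\tlam}=[\lam]!\,F_{\tlam}$ with the help of Lemma \ref{lm01} and transports along $d(\s),d(\t)$. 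The multiplication rule is then formal: $\tilde f_{\s\t}\tilde f_{\u\v}\in F_\s\Q\Sym_n F_\v$ vanishes unless $\t=\u$ (as $F_\t F_\u=\delta_{\t\u}F_\t$), so one only needs to compare scalars when $\t=\u$, which reduces to $f_{\t\t}^2=\gamma_\t f_{\t\t}$, i.e., $f_{\t\t}=\gamma_\t F_\t$; the normalisation $\tilde f_{\s\t}=\gamma_\t^{-1}f_{\s\t}$ is designed exactly so that $\tilde f_{\s\t}\tilde f_{\t\v}=\tilde f_{\s\v}$. Finally I would identify the scalar $\gamma_\t$ defined by $f_{\t\t}=\gamma_\t F_\t$ with the combinatorial product of content differences by inducting on $n$ along the branching, the step from $\t_{n-1}$ to $\t$ multiplying $\gamma_{\t_{n-1}}$ by $\prod_{\alpha\in\Add_\t(n)}\bigl(\cont_\t(n)-\cont(\alpha)\bigr)\big/\prod_{\rho\in\Rem_\t(n)}\bigl(\cont_\t(n)-\cont(\rho)\bigr)$, with base case $\gamma_{\tlam}=[\lam]!$ from Lemma \ref{lm01}.

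The main obstacle is the part of the argument that really touches the internal structure of the Specht modules rather than formal semisimplicity: setting up Young's seminormal form, checking that $L_k$ has eigenvalue $\cont_\t(k)$ on $S^\lam_\t$, and --- above all --- pinning down the normalising scalar $\gamma_\t$ as the stated ratio of content differences, which amounts to matching the representation-theoretic norm recursion under the branching rule with the combinatorial formula. Once the spectral data of the $L_k$ on the Specht modules is available, parts (2)--(4) and the matrix-unit multiplication in (1) follow formally from semisimplicity and the one-dimensionality of the $\mathcal L$-weight spaces.
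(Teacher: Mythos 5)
The paper does not prove this theorem; it is cited verbatim from Mathas \cite[(2.14),(2.15)]{M:gendeg}, so there is no internal proof to compare against. Your outline is nevertheless a correct route to the result, and it is worth noting how it differs from Mathas's argument. You work from the branching rule and Young's seminormal form: multiplicity-free restriction gives the canonical decomposition $S^\lam=\bigoplus_{\t}S^\lam_\t$ into one-dimensional $\Q[L_1,\dots,L_n]$-eigenspaces, the eigenvalues are the contents, distinct standard tableaux have distinct content sequences, and the $F_\t$ drop out as Lagrange interpolation idempotents; parts (2)--(4) and the matrix-unit relations in (1) then follow formally from split semisimplicity and the one-dimensionality of $F_\s\,\Q\Sym_n\,F_\t$. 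Mathas's proof in \cite{M:gendeg} is instead carried out entirely inside the cellular (Murphy) basis framework: the JM-eigenvalues and the orthogonality of the $F_\t$ are extracted from dominance-triangularity of the JM action on the Murphy basis (essentially Lemma~\ref{lm02} of the present paper), without ever invoking the branching decomposition of the Specht modules, and this is what lets the argument generalize uniformly to Ariki--Koike algebras. The one place your sketch is thin is the non-vanishing of $f_{\s\t}=F_\s X_{\s\t}F_\t$ and the identification $f_{\t\t}=\gamma_\t F_\t$ with the explicit product of content differences: ``leading term $X_\lam$'' does not by itself show $F_\s X_{\s\t}F_\t\neq0$, and the base-case computation $F_{\tlam}X_\lam F_{\tlam}=[\lam]!\,F_{\tlam}$ together with the recursion for $\gamma_\t$ along the branching both require the full strength of the seminormal action of $s_{k-1}$, which you defer. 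This is exactly the content of \cite[Prop.~2.6, (2.8), (2.9)]{M:gendeg}, so the gap is a matter of detail rather than of idea.
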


\begin{lemma}\label{lm02} {\rm (\cite[Proposition 2.6]{M:gendeg})} Let $\lam\vdash n$ and $\s,\t\in\Std(\lam)$. Then $$
f_{\s\t}\equiv X_{\s\t}+\sum_{\substack{\u,\v\in\Std(\lam)\\
\u\rhd\s,\v\rhd\t}}a_{\u\v}X_{\u\v}\pmod{(\Q\Sym_n)^{\rhd\lam}},
$$
where $a_{\u,\v}\in\Q$ for each $\u,\v\in\Std(\lam)$.
\end{lemma}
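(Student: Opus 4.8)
The plan is to read the expansion off from the way the Jucys--Murphy operators $L_1,\dots,L_n$ act on the Murphy basis modulo $(\Q\Sym_n)^{\rhd\lam}$, combined with the observation that $F_\t$ is — up to strictly higher corrections — the projector onto the ``content-weight space'' of $\t$. Thus the proof splits into: (i) recording the triangular action of each $L_k$ on the Murphy basis; (ii) establishing the content-projector property of $F_\t$; (iii) assembling $f_{\s\t}=F_\s X_{\s\t}F_\t$ from (i) and (ii).

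For (i) I would recall Murphy's basic computation (the content-function version of the argument in \cite{Murphy:basis}; see also \cite{GL}): for $\lam\vdash n$, $\s,\t\in\Std(\lam)$ and $1\le k\le n$,
\begin{equation*}
X_{\s\t}L_k\equiv\cont_\t(k)X_{\s\t}+\sum_{\substack{\v\in\Std(\lam)\\ \v\rhd\t}}r^{(k)}_\v X_{\s\v}\pmod{(\Q\Sym_n)^{\rhd\lam}}
\end{equation*}
for suitable $r^{(k)}_\v\in\Q$; that is, in the quotient $(\Q\Sym_n)^{\unrhd\lam}/(\Q\Sym_n)^{\rhd\lam}$, right multiplication by $L_k$ is triangular for the dominance order on $\Std(\lam)$ with ``eigenvalue'' $\cont_\t(k)$ on the image of $X_{\s\t}$. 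Applying the $R$-linear anti-automorphism $\,{}^*$ of $\Q\Sym_n$ determined by $s_i^*=s_i$ — which fixes each $L_k$, sends $X_{\s\t}\mapsto X_{\t\s}$, and stabilises every $(\Q\Sym_n)^{\rhd\mu}$ — yields the transposed formula $L_kX_{\s\t}\equiv\cont_\s(k)X_{\s\t}+\sum_{\u\rhd\s}\widetilde r^{(k)}_\u X_{\u\t}$ modulo $(\Q\Sym_n)^{\rhd\lam}$.

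For (ii), since $F_\t$ is a polynomial in $L_1,\dots,L_n$, and since $\cont_\v(k)\in\mathcal{R}(k)$ for every standard tableau $\v$ (of any shape) and every $k$, evaluating $F_\t$ at the content sequence $(\cont_\v(1),\dots,\cont_\v(n))$ gives $1$ if $\v=\t$ and $0$ otherwise — a standard tableau being determined by its sequence of contents, so that for $\v\ne\t$ one of the factors of $F_\t$ vanishes on that sequence. Feeding this into the two triangular formulas of (i) and iterating over $k$ gives, in the quotient,
\begin{equation*}
X_{\s\t}F_\t\equiv X_{\s\t}+\sum_{\v\rhd\t}(\ast)\,X_{\s\v},\qquad
F_\s X_{\s\t}\equiv X_{\s\t}+\sum_{\u\rhd\s}(\ast)\,X_{\u\t}\pmod{(\Q\Sym_n)^{\rhd\lam}},
\end{equation*}
together with $X_{\s\v}F_\t\equiv 0$ when $\v\ne\t$ does not dominate $\t$, and so on. Composing the two one-sided relations and keeping track of which Murphy basis elements $X_{\u\v}$ can occur then produces $f_{\s\t}=F_\s X_{\s\t}F_\t\equiv X_{\s\t}+\sum a_{\u\v}X_{\u\v}\pmod{(\Q\Sym_n)^{\rhd\lam}}$ with each $X_{\u\v}$ strictly higher than $X_{\s\t}$ in the dominance order, which is the assertion.

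The step I expect to be the main obstacle is (i), Murphy's triangular action of the $L_k$: proving it from scratch requires the explicit commutation relations of the $L_k$ with the Coxeter generators and a delicate induction over the dominance order on Young subgroups. Since it is classical, it can simply be invoked, after which (ii) is a short direct verification and (iii) is formal — the only genuine care being in (iii), in pinning down precisely the set of ``higher'' indices $(\u,\v)$ that survive after applying the two content-projectors on either side.
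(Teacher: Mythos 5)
Your route --- triangularity of the Jucys--Murphy elements on the Murphy basis together with the content-separating property of $F_{\t}$ --- is the standard one, and it is the only route available for comparison, since the paper gives no proof of this lemma but merely cites \cite[Proposition 2.6]{M:gendeg}. Steps (i) and (ii) of your sketch are correct. The gap sits exactly where you yourself flag the need for care, in step (iii). Composing $X_{\s\t}F_{\t}\equiv X_{\s\t}+\sum_{\v\rhd\t}a_{\v}X_{\s\v}$ with $F_{\s}X_{\s\v}\equiv X_{\s\v}+\sum_{\u\rhd\s}c_{\u\v}X_{\u\v}$ gives
$$
f_{\s\t}\equiv X_{\s\t}+\sum_{\u\rhd\s}c_{\u\t}X_{\u\t}+\sum_{\v\rhd\t}a_{\v}X_{\s\v}+\sum_{\substack{\u\rhd\s\\ \v\rhd\t}}(\ast)\,X_{\u\v}\pmod{(\Q\Sym_n)^{\rhd\lam}},
$$
and your argument supplies no mechanism for the two cross sums --- in which only one of the two indices strictly increases --- to cancel. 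They do not cancel, so the displayed conclusion with \emph{both} $\u\rhd\s$ and $\v\rhd\t$ cannot be reached this way; indeed it is false as literally written.

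A minimal counterexample: take $n=3$, $\lam=(2,1)$, $\s=\t^{\lam}$, $\t=\t_{\lam}$, so that $X_{\s\t}=(1+s_1)s_2$ and, by Lemmas \ref{lm01} and \ref{lm03}, $f_{\s\t}=\tfrac12 z_{\lam}=\tfrac12(1+s_1)s_2(1-s_1)$. Expanding in the Murphy basis (here $(\Q\Sym_3)^{\rhd(2,1)}=\Q X_{(3)}$ with $X_{(3)}=\sum_{w\in\Sym_3}w$) gives
$$
f_{\s\t}=X_{\s\t}+\tfrac12X_{\s\s}-\tfrac12X_{(3)}\equiv X_{\s\t}+\tfrac12X_{\s\s}\pmod{(\Q\Sym_3)^{\rhd(2,1)}},
$$
and the surviving correction term $X_{\s\s}$ has $\u=\s$, not $\u\rhd\s$ (this is just the familiar fact that the seminormal vector indexed by $\t_{\lam}$ is the Murphy vector plus a combination of \emph{more dominant} Murphy vectors). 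What your computation actually establishes --- and what \cite[Proposition 2.6]{M:gendeg} asserts --- is the version in which the sum runs over pairs $(\u,\v)\neq(\s,\t)$ with $\u\unrhd\s$ and $\v\unrhd\t$. You should either prove that weaker (correct) statement and note that the lemma as displayed is a mis-transcription, or supply the missing cancellation argument --- which, by the example above, cannot exist.
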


\begin{definition} {\rm (\cite[Section 4]{DJ1})} Let $\lam\vdash n$. We define $$
z_{\lam}:=X_{\lam}w_{\lam}Y_{\lam'}.
$$
\end{definition}

\begin{lemma} \label{lm04} {\rm (\cite{DJ1}, \cite{Murphy:basis})} Let $\lam\vdash n$ and $w\in\Sym_n$. Then \begin{enumerate}
\item[(1)] $(\Z\Sym_n)^{\rhd\lam}Y_{\lam'}=0=Y_{\lam'}(\Z\Sym_n)^{\rhd\lam}$;
\item[(2)] If $w\neq w_{\lam}$ and $\ell(w)\leq\ell(w_{\lam})$, then $X_{\lam}wY_{\lam'}=0$ in $\Z\Sym_n$.
\end{enumerate}
\end{lemma}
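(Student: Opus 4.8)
The plan is to deduce both parts from a single observation about products of the form $X_{\mu}wY_{\lam'}$, so I would establish that first. Claim: for $\mu\vdash n$ and $w\in\Sym_n$, the element $X_{\mu}wY_{\lam'}$ vanishes if and only if $\Sym_{\mu}\cap w\Sym_{\lam'}w^{-1}\neq\{1\}$. If this intersection contains a non-identity element, it contains a transposition $t$ (such an element sends some $a$ to some $b$, and then $a,b$ lie in a common block of $\Sym_{\mu}$ and in a common block of $w\Sym_{\lam'}w^{-1}$, so $(a,b)$ lies in the intersection); setting $\sigma:=w^{-1}tw\in\Sym_{\lam'}$ (again a transposition) and using $X_{\mu}t=X_{\mu}$ together with $\sigma Y_{\lam'}=-Y_{\lam'}$ gives $X_{\mu}wY_{\lam'}=X_{\mu}twY_{\lam'}=X_{\mu}w\sigma Y_{\lam'}=-X_{\mu}wY_{\lam'}$, hence $X_{\mu}wY_{\lam'}=0$ since $\Z$ is torsion-free. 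Conversely, if the intersection is trivial, then $(u,\sigma)\mapsto uw\sigma$ on $\Sym_{\mu}\times\Sym_{\lam'}$ is injective, so in $X_{\mu}wY_{\lam'}=\sum_{u,\sigma}(-1)^{\ell(\sigma)}uw\sigma$ no cancellation occurs and the coefficient of $w$ equals $1$.

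For part (1): since $(\Z\Sym_n)^{\rhd\lam}$ is spanned by the Murphy basis elements $X_{\s\t}=d(\s)^{-1}X_{\mu}d(\t)$ with $\mu\rhd\lam$, the space $(\Z\Sym_n)^{\rhd\lam}Y_{\lam'}$ is spanned by the elements $d(\s)^{-1}\bigl(X_{\mu}d(\t)Y_{\lam'}\bigr)$, so it suffices to prove $X_{\mu}wY_{\lam'}=0$ for all $w\in\Sym_n$ whenever $\mu\rhd\lam$. By the criterion I must show $\Sym_{\mu}\cap w\Sym_{\lam'}w^{-1}\neq\{1\}$, and this is where the classical combinatorial lemma enters: realising $w\Sym_{\lam'}w^{-1}$ as the column stabiliser of the $\lam$-tableau obtained from $\t^{\lam'}$ by relabelling via $w$ and transposing, a trivial intersection would force the entries of each row $i$ of $\t^{\mu}$ to occupy $\mu_i$ distinct columns of that tableau; counting the entries of the first $k$ rows of $\t^{\mu}$ column by column then yields $\sum_{i\le k}\mu_i\le\sum_{i\le k}\lam_i$ for all $k$, i.e.\ $\mu\unlhd\lam$, contradicting $\mu\rhd\lam$. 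For the other identity $Y_{\lam'}(\Z\Sym_n)^{\rhd\lam}=0$, I would apply the anti-automorphism $\ast\colon w\mapsto w^{-1}$ of $\Z\Sym_n$: it fixes $Y_{\lam'}$, interchanges $X_{\s\t}$ with $X_{\t\s}$ and hence preserves $(\Z\Sym_n)^{\rhd\lam}$, so the second identity is the $\ast$-image of the first.

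For part (2): let $w\in\Sym_n$ with $X_{\lam}wY_{\lam'}\neq0$; I must show $\ell(w)\ge\ell(w_{\lam})$, with equality only if $w=w_{\lam}$. By the criterion $\Sym_{\lam}\cap w\Sym_{\lam'}w^{-1}=\{1\}$, and now the \emph{equality} case of the combinatorial lemma ($\mu=\lam$) applies: the counting above becomes tight, which forces the relevant $\lam$-tableau to be column-equivalent to $\t^{\lam}$ and thereby pins down the $(\Sym_{\lam},\Sym_{\lam'})$-double coset of $w$; it is the unique double coset on which the intersection is trivial, and it contains $w_{\lam}$ (so in fact $X_{\lam}wY_{\lam'}=\pm z_{\lam}$). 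Thus $w\in\Sym_{\lam}w_{\lam}\Sym_{\lam'}$. As $\Sym_{\lam}$ and $\Sym_{\lam'}$ are standard parabolic subgroups of $\Sym_n$, this double coset has a unique element of minimal length; since $w_{\lam}=d(\t_{\lam})$ with $\t_{\lam}$ column standard we have $w_{\lam}\in\mathcal{D}_{\lam}$ and $w_{\lam}^{-1}\in\mathcal{D}_{\lam'}$, which identifies $w_{\lam}$ as that minimal element. Hence $\ell(w)\ge\ell(w_{\lam})$ with equality forcing $w=w_{\lam}$; equivalently, $w\ne w_{\lam}$ together with $\ell(w)\le\ell(w_{\lam})$ forces $X_{\lam}wY_{\lam'}=0$.

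The delicate point is the combinatorial lemma and, above all, its equality case used in part (2): one needs not merely that a trivial row--column incidence forces $\mu\unlhd\lam$, but that for $\mu=\lam$ it confines $w$ to the single double coset $\Sym_{\lam}w_{\lam}\Sym_{\lam'}$. The book-keeping that makes this precise --- translating $\Sym_{\mu}\cap w\Sym_{\lam'}w^{-1}=\{1\}$ into an incidence statement about an explicit $\lam$-tableau while respecting the left/right conventions for the $\Sym_n$-action on tableaux --- is the genuinely fiddly part; the vanishing criterion, the torsion-freeness argument, the $\ast$-symmetry, and the uniqueness of the minimal-length double-coset representative are all standard and can be quoted from \cite{DJ1} and \cite{Murphy:basis}.
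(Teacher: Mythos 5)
Your proof is correct but follows a genuinely different route from the paper's. The paper proves (1) by a direct citation to \cite[Lemma 4.12]{Murphy:basis}, and proves (2) by a straightening argument inside the Murphy-basis framework: the subcase $\ell(w)<\ell(w_\lam)$ is quoted from \cite[Corollary 4.13]{Murphy:basis}, and for $\ell(w)=\ell(w_\lam)$, $w\neq w_\lam$ the paper first notes $\tlam w\notin\Std(\lam)$ (by \cite[Lemma 1.5]{DJ1}), then straightens $X_\lam w$ modulo $(\Z\Sym_n)^{\rhd\lam}$ into a $\Z$-combination of $X_\lam d(\t)$ with $\ell(d(\t))<\ell(w_\lam)$ (using \cite[Corollary 3.21, Theorem 3.8]{M:Ulect}), and then kills each summand by (1) and Cor.\ 4.13. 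You instead go back to the classical James-style picture: you first establish, over $\Z$, the exact vanishing criterion
$X_\mu w Y_{\lam'}=0$ if and only if $\Sym_\mu\cap w\Sym_{\lam'}w^{-1}\neq\{1\}$ (the forward direction by the transposition-averaging trick and torsion-freeness; the converse by injectivity of $(u,\sigma)\mapsto uw\sigma$), deduce (1) from the Basic Combinatorial Lemma (trivial intersection forces $\mu\unlhd\lam$, contradicting $\mu\rhd\lam$) together with the $*$-symmetry, and deduce (2) from the equality case of that lemma plus the fact that $w_\lam=d(\t_\lam)\in\mathcal D_\lam$ and $w_\lam^{-1}=w_{\lam'}\in\mathcal D_{\lam'}$ identify $w_\lam$ as the unique minimal-length element of the double coset $\Sym_\lam w_\lam\Sym_{\lam'}$. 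Both arguments are sound; yours actually yields the sharper classification $X_\lam w Y_{\lam'}\neq 0\Leftrightarrow w\in\Sym_\lam w_\lam\Sym_{\lam'}$ (with value $\pm z_\lam$), at the cost of relying on the equality case of the combinatorial lemma and the double-coset theory of parabolic subgroups, which you rightly flag as the delicate bookkeeping to be quoted from \cite{DJ1} and \cite{Murphy:basis}. The paper's route stays entirely inside the Murphy cellular machinery and only proves the weaker implication it actually needs.
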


\begin{proof} (1) follows from \cite[Lemma 4.12]{Murphy:basis}. It remains to prove (2). Assume that
$w\neq w_{\lam}$ and $\ell(w)\leq\ell(w_{\lam})$. If
$\ell(w)<\ell(w_{\lam})$, then by \cite[Corollary
4.13]{Murphy:basis} we see that $X_{\lam}wY_{\lam'}=0$. Now assume
$\ell(w)=\ell(w_{\lam})$. Then by \cite[Lemma 1.5]{DJ1} $\tlam
w\not\in\std(\lam)$ because $w\neq w_{\lam}$.

For any $\t\in\std(\lam)$ and integer $1\leq k<n$, it is well-known
that $$ X_{\lam}d(\t)s_k=\begin{cases} X_{\lam}d(\t), &\text{if
$k,k+1$ are in the same row of $\t$;}\\
X_{\lam}d(\t s_k), &\text{if $\t s_k\in\std(\lam)$,}
\end{cases}
$$
and if $k,k+1$ are in the same column of $\t$, then (by
\cite[Corollary 3.21]{M:Ulect}) $$ X_{\lam}d(\t)s_k\equiv
-X_{\lam}d(\t)+\sum_{\t\lhd\v\in\std(\lam)}r_{\v}X_{\lam}d(\v)\pmod{(\Z\Sym_n)^{\rhd\lam}},
$$
where $r_{\v}\in\Z$ for each $\v$. By \cite[Theorem 3.8]{M:Ulect},
$\t\lhd\v\in\std(\lam)$ only if $\ell(d(\t))>\ell(d(\v))$. As a
result (of the fact $\tlam w\not\in\std(\lam)$), we see that
$$
X_{\lam}w\equiv\sum_{\substack{\t\in\std(\lam)\\
\ell(d(\t))<\ell(w)=\ell(w_{\lam})}}b_{\t}X_{\lam}d(\t)\pmod{(\Z\Sym_n)^{\rhd\lam}},
$$
where $b_{\t}\in\Z$ for each $\t$. Now, applying the result (1)
(which we have just proved) and \cite[Corollary 4.13]{Murphy:basis}
again,
$$ X_{\lam}wY_{\lam'}=\sum_{\substack{\t\in\std(\lam)\\
\ell(d(\t))<\ell(w)=\ell(w_{\lam})}}b_{\t}X_{\lam}d(\t)Y_{\lam'}=0,
$$
as required. This completes the proof of (2).
\end{proof}

\begin{lemma}\label{lm03} {\rm (\cite[(3.13)]{M:gendeg})} Let $\lam\vdash n$. Then $$
z_{\lam}=\gamma_{\t^{\lam'}}f_{\tlam\t_{\lam}}.
$$
\end{lemma}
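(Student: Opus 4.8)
The plan is to show first that $z_\lambda$ lies in the one-dimensional subspace $F_{\tlam}(\Q\Sym_n)F_{\t_\lambda}$ of $\Q\Sym_n$, and then to pin down the resulting scalar. Since $d(\tlam)=1$ and $\tlam w_\lambda=\t_\lambda$ gives $d(\t_\lambda)=w_\lambda$, we have $X_{\tlam\t_\lambda}=d(\tlam)^{-1}X_\lambda d(\t_\lambda)=X_\lambda w_\lambda$, so that $f_{\tlam\t_\lambda}=F_{\tlam}X_\lambda w_\lambda F_{\t_\lambda}$. Working in the basis of matrix units $\{\tilde f_{\s\t}\}$, with $F_\u=\tilde f_{\u\u}$ and $\tilde f_{\s\t}\tilde f_{\u\v}=\delta_{\t\u}\tilde f_{\s\v}$, one sees at once that $F_{\tlam}(\Q\Sym_n)F_{\t_\lambda}=\Q\,\tilde f_{\tlam\t_\lambda}=\Q\,f_{\tlam\t_\lambda}$ is one-dimensional. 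Hence it is enough to prove (a) $F_{\tlam}z_\lambda=z_\lambda=z_\lambda F_{\t_\lambda}$, so that $z_\lambda=c\,f_{\tlam\t_\lambda}$ for some $c\in\Q$, and (b) $c=\gamma_{\t^{\lambda'}}$.

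For (a), apply Lemma~\ref{lm02} with $\s=\t=\tlam$: since $\tlam$ is the $\rhd$-maximal standard $\lambda$-tableau the correction sum is empty, and together with Lemma~\ref{lm01} this gives $[\lambda]!\,F_{\tlam}=\gamma_{\tlam}F_{\tlam}=f_{\tlam\tlam}=X_\lambda+h$ with $h\in(\Q\Sym_n)^{\rhd\lambda}$. Then $[\lambda]!\,F_{\tlam}z_\lambda=(X_\lambda+h)X_\lambda w_\lambda Y_{\lambda'}=[\lambda]!\,z_\lambda+hX_\lambda w_\lambda Y_{\lambda'}$, using $X_\lambda^2=[\lambda]!X_\lambda$; and since $(\Q\Sym_n)^{\rhd\lambda}$ is a two-sided ideal we have $hX_\lambda w_\lambda\in(\Q\Sym_n)^{\rhd\lambda}$, so $hX_\lambda w_\lambda Y_{\lambda'}=0$ by Lemma~\ref{lm04}(1); dividing by $[\lambda]!$ gives $F_{\tlam}z_\lambda=z_\lambda$. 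For the right-hand identity, use the anti-automorphism $\ast\colon w\mapsto w^{-1}$ (which fixes $X_\lambda$, $Y_{\lambda'}$ and each $F_\t$ — because $L_k^\ast=L_k$ — and sends $w_\lambda$ to $w_\lambda^{-1}$) together with the automorphism $\tau$ (which sends $X_\mu$ to $Y_\mu$, and since $\tau(L_k)=-L_k$ while $\mathcal R(k)$ is symmetric about $0$ with $\cont_{\t'}(k)=-\cont_\t(k)$, sends $F_\t$ to $F_{\t'}$). Using $w_\lambda^{-1}=w_{\lambda'}$ and $\t_\lambda'=\t^{\lambda'}$ one checks that $\tau(z_\lambda^\ast)=\pm z_{\lambda'}$ and $\tau(F_{\t_\lambda})=F_{\t^{\lambda'}}$, so the identity $z_\lambda F_{\t_\lambda}=z_\lambda$ is equivalent, after applying $\ast$ and then $\tau$, to $F_{\t^{\lambda'}}z_{\lambda'}=z_{\lambda'}$ — which is exactly what we just proved, with $\lambda'$ in place of $\lambda$. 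Finally $z_\lambda\ne 0$ (a short computation shows the coefficient of $w_\lambda$ in $z_\lambda=X_\lambda w_\lambda Y_{\lambda'}$ is $1$, the row and column stabilizers of $\t_\lambda$ intersecting trivially), so $c\ne 0$.

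For (b), let $\langle a,b\rangle$ denote the coefficient of $1$ in $ab$, the symmetrizing form on $\Q\Sym_n$, for which $\langle F_\t,1\rangle=(\dim S^\lambda)/n!$ whenever $\t\in\Std(\lambda)$ (a standard property, as $F_\t$ is a primitive idempotent lying in the block of $\lambda$). A direct stabilizer count gives $\langle X_\lambda w_\lambda Y_{\lambda'}w_\lambda^{-1}X_\lambda,\,1\rangle=|\Sym_\lambda|=[\lambda]!$. Using (a) together with $X_\lambda^2=[\lambda]!X_\lambda$, $\gamma_{\tlam}=[\lambda]!$ and $Y_{\lambda'}(\Q\Sym_n)^{\rhd\lambda}=0$ (Lemma~\ref{lm04}(1)), one computes $z_\lambda\,\tilde f_{\t_\lambda\tlam}=\tfrac{1}{[\lambda]!}X_\lambda w_\lambda Y_{\lambda'}w_\lambda^{-1}X_\lambda$, and from $Y_{\lambda'}^2=[\lambda']!Y_{\lambda'}$ and $z_\lambda^\ast=Y_{\lambda'}w_\lambda^{-1}X_\lambda$ that $z_\lambda z_\lambda^\ast=[\lambda']!\,X_\lambda w_\lambda Y_{\lambda'}w_\lambda^{-1}X_\lambda$; hence $\langle z_\lambda,\tilde f_{\t_\lambda\tlam}\rangle=1$ and $\langle z_\lambda,z_\lambda^\ast\rangle=[\lambda']!\,[\lambda]!$. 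On the other hand, writing $z_\lambda=c\gamma_{\t_\lambda}\tilde f_{\tlam\t_\lambda}$ and using the matrix-unit relations together with $\langle F_{\tlam},1\rangle=(\dim S^\lambda)/n!$, we get $\langle z_\lambda,\tilde f_{\t_\lambda\tlam}\rangle=c\gamma_{\t_\lambda}(\dim S^\lambda)/n!$ and $\langle z_\lambda,z_\lambda^\ast\rangle=c^2\gamma_{\t_\lambda}[\lambda]!(\dim S^\lambda)/n!$. Comparing, $c\gamma_{\t_\lambda}(\dim S^\lambda)/n!=1$ and $c^2\gamma_{\t_\lambda}(\dim S^\lambda)/n!=[\lambda']!$; dividing the second by the first (legitimate since $c\ne 0$) yields $c=[\lambda']!=\gamma_{\t^{\lambda'}}$ by Lemma~\ref{lm01}, as claimed.

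I expect the main obstacle to be step (b): one has to produce the exact constant $\gamma_{\t^{\lambda'}}$ rather than merely ``some nonzero scalar'', and the cleanest way I see to do so is the pair of pairings above — against $\tilde f_{\t_\lambda\tlam}$ and against $z_\lambda^\ast$ — which together cancel the unknown normalization $\gamma_{\t_\lambda}$ (equivalently, they derive the identity $\gamma_{\t_\lambda}\gamma_{\t^{\lambda'}}=n!/\dim S^\lambda$ rather than assuming it). A secondary difficulty is checking $z_\lambda F_{\t_\lambda}=z_\lambda$ in step (a): there is no ``$Y$-Murphy'' analogue of Lemma~\ref{lm02} available directly, so one must instead pass through the symmetries $\ast$ and $\tau$, which requires the routine but slightly delicate identifications $\tau(F_\t)=F_{\t'}$, $w_{\lambda'}=w_\lambda^{-1}$ and $\t_\lambda'=\t^{\lambda'}$.
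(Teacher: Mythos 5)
The paper does not prove this lemma; it simply cites \cite[(3.13)]{M:gendeg}, so there is no internal proof to compare against. Your reconstruction is correct and complete given the lemmas available in the paper. The two key moves — showing $z_\lambda\in F_{\tlam}(\Q\Sym_n)F_{\t_\lambda}$ by combining Lemma~\ref{lm02} (applied with $\s=\t=\tlam$, where the correction sum vanishes) with the annihilation property of Lemma~\ref{lm04}(1), and then passing to $\lambda'$ via the symmetries $\ast$ and $\tau$ to get the right-hand idempotent — are exactly the right ingredients, and the verifications $\tau(F_\t)=F_{\t'}$, $w_{\lambda'}=w_\lambda^{-1}$, $(\t_\lambda)'=\t^{\lambda'}$ all check out. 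The determination of the scalar via the pair of trace pairings $\langle z_\lambda,\tilde f_{\t_\lambda\tlam}\rangle$ and $\langle z_\lambda,z_\lambda^\ast\rangle$ is a nice touch: it avoids having to quote the identity $\gamma_{\tlam}\gamma_{\t^{\lambda'}}=n!/\dim S^\lambda$ (which Mathas establishes separately as part of his Schaper/generic-degree machinery) and instead derives it in passing. This makes your argument somewhat more self-contained than simply unwinding Mathas's chain of results, at the cost of invoking the standard fact $\langle F_\t,1\rangle=\dim S^\lambda/n!$ for a primitive idempotent in the $\lambda$-block. One small presentational remark: in step (b) you implicitly use $\tilde f_{\s\t}^{\ast}=(\gamma_\s/\gamma_\t)\tilde f_{\t\s}$, which follows from $f_{\s\t}^{\ast}=f_{\t\s}$; it would be worth stating this explicitly, since it is the only place the $\ast$-behaviour of the normalised matrix units is needed.
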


Note that in the right hand side of the above lemma the coefficient is $\gamma_{\t^{\lam'}}$ instead of $\gamma'_{\t^{\lam'}}$ because we have specialized $q$ to $1$.\smallskip

The next lemma is a key observation to the proof of Theorem \ref{mainthm}.

\begin{lemma} \label{keylem} Let $\lam=(n-k,k)\vdash n$, where $k\in\Z$ such that $0\leq k\leq n/2$. Then \begin{enumerate}
\item[(1)] $z_{\lam}=2^{k}f_{\tlam\t_{\lam}}\equiv 2^{k}X_{\tlam\t_{\lam}}\!\!\pmod{(\Z\Sym_n)^{\rhd\lam}}$.
\item[(2)] $Y_{\lam'}w_{\lam'}X_{\lam}w_{\lam}Y_{\lam'}=2^{k}Y_{\lam'}w_{\lam'}X_{\lam}w_{\lam}$.
\end{enumerate}
\end{lemma}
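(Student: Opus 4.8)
The plan is to prove the two assertions separately, using Lemma \ref{lm03} together with the explicit combinatorics of the two-row partition $\lam=(n-k,k)$.

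\textbf{Proof of (1).} By Lemma \ref{lm03} we have $z_\lam=\gamma_{\t^{\lam'}}f_{\tlam\t_\lam}$, so the first equality in (1) will follow once I show $\gamma_{\t^{\lam'}}=2^k$. Since $\lam'=(2^k,1^{n-2k})$ is a column-shaped partition, Lemma \ref{lm01} gives $\gamma_{\t^{\lam'}}=[\lam']!=\prod_i\lam'_i!=2!\cdots 2!\cdot 1!\cdots 1!=2^k$, where there are exactly $k$ parts equal to $2$. This is the only place where the two-row (equivalently, two-column-conjugate) hypothesis is used, and it is what makes the scalar come out as a clean power of $2$. The congruence $f_{\tlam\t_\lam}\equiv X_{\tlam\t_\lam}\pmod{(\Z\Sym_n)^{\rhd\lam}}$ is immediate from Lemma \ref{lm02} applied with $\s=\tlam$ and $\t=\t_\lam$: the correction terms $X_{\u\v}$ all have $\u\rhd\tlam$, but $\tlam$ is the unique standard $\lam$-tableau with $d(\tlam)=1$ of minimal dominance, hence there are no such $\u$ and the sum is empty modulo $(\Z\Sym_n)^{\rhd\lam}$; alternatively one simply invokes Lemma \ref{lm02} directly, noting $\u\rhd\tlam$ is impossible so only the $v$-sum survives, and the $v$-terms lie in the stated ideal after multiplying out. (One should double-check that the coefficients produced by Lemma \ref{lm02}, a priori in $\Q$, together with $\gamma_{\t^{\lam'}}\in\Z$, really land everything in $\Z\Sym_n$; this is guaranteed because $z_\lam=X_\lam w_\lam Y_{\lam'}\in\Z\Sym_n$ by construction.)

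\textbf{Proof of (2).} Here the strategy is to expand $X_\lam w_\lam Y_{\lam'}$ using (1) and then analyse $Y_{\lam'}w_{\lam'}X_{\tlam\t_\lam}$ and its product with $w_\lam Y_{\lam'}$ versus $w_\lam$. Write $X_{\tlam\t_\lam}=w_\lam X_\lam$ (since $d(\t_\lam)=w_\lam$ and $d(\tlam)=1$, so $X_{\tlam\t_\lam}=d(\tlam)^{-1}X_\lam d(\t_\lam)=X_\lam w_\lam$; I must be careful with the precise convention, but in either reading the relevant element is $X_\lam w_\lam$ or a left translate of it). Using part (1), modulo $(\Z\Sym_n)^{\rhd\lam}$ one has $X_\lam w_\lam Y_{\lam'}=2^{-k}z_\lam Y_{\lam'}+(\text{junk})$, but a cleaner route is: $Y_{\lam'}w_{\lam'}X_\lam w_\lam Y_{\lam'}$. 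Apply Lemma \ref{lm04}(1): $(\Z\Sym_n)^{\rhd\lam}Y_{\lam'}=0$, so multiplying the congruence in (1) on the right by $Y_{\lam'}$ kills the error term and yields $z_\lam Y_{\lam'}=2^k X_{\tlam\t_\lam}Y_{\lam'}$, i.e. $X_\lam w_\lam Y_{\lam'}Y_{\lam'}=2^k X_\lam w_\lam Y_{\lam'}$ up to bookkeeping; since $Y_{\lam'}^2=[\lam']!\,Y_{\lam'}=2^k Y_{\lam'}$ this is consistent, and more importantly it shows $X_\lam w_\lam Y_{\lam'}$ is, up to the scalar $2^k$, an idempotent-type element. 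Then left-multiply by $Y_{\lam'}w_{\lam'}$ and use the "straightening" identity $Y_{\lam'}w_{\lam'}X_\lam w_\lam = 2^{-k}Y_{\lam'}w_{\lam'}X_\lam w_\lam Y_{\lam'}\cdot(\dots)$; concretely, I expect to show $Y_{\lam'}w_{\lam'}X_{\tlam\t_\lam}=Y_{\lam'}w_{\lam'}X_\lam w_\lam$ already has the property that right-multiplication by $Y_{\lam'}$ reproduces it up to $2^k$, which is exactly statement (2). The cleanest formulation: from (1), $2^k X_{\tlam\t_\lam}\equiv z_\lam\pmod{(\Z\Sym_n)^{\rhd\lam}}$; multiply on the left by $Y_{\lam'}w_{\lam'}$ and on the right by nothing, getting $2^k Y_{\lam'}w_{\lam'}X_{\tlam\t_\lam}=Y_{\lam'}w_{\lam'}z_\lam$ (the error dies by Lemma \ref{lm04}(1) on the \emph{left} after noting $Y_{\lam'}w_{\lam'}(\Z\Sym_n)^{\rhd\lam}$ need not vanish — so instead multiply on the right by $Y_{\lam'}$ first). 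Putting it together: $2^k Y_{\lam'}w_{\lam'}X_{\tlam\t_\lam}Y_{\lam'}=Y_{\lam'}w_{\lam'}z_\lam Y_{\lam'}=Y_{\lam'}w_{\lam'}X_\lam w_\lam Y_{\lam'}Y_{\lam'}=2^k Y_{\lam'}w_{\lam'}X_\lam w_\lam Y_{\lam'}$, and separately $Y_{\lam'}w_{\lam'}z_\lam=Y_{\lam'}w_{\lam'}X_\lam w_\lam Y_{\lam'}$ by definition of $z_\lam$, while $2^k Y_{\lam'}w_{\lam'}X_{\tlam\t_\lam}=Y_{\lam'}w_{\lam'}z_\lam+Y_{\lam'}w_{\lam'}(\text{error})$; one then argues the error term contributes $0$ after the final right multiplication by nothing, using that $w_{\lam'}$ and $z_\lam$ interact so that $Y_{\lam'}w_{\lam'}\cdot(\Z\Sym_n)^{\rhd\lam}$ lands in a space annihilated on the right appropriately — this is where Lemma \ref{lm04}(2) (vanishing of $X_\lam w Y_{\lam'}$ for $w\neq w_\lam$ of small length) is the real tool.

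\textbf{Main obstacle.} The genuinely delicate point is part (2): controlling the error term $X_{\tlam\t_\lam}-2^{-k}z_\lam\in(\Z\Sym_n)^{\rhd\lam}$ after it is sandwiched between $Y_{\lam'}w_{\lam'}$ on the left and $X_\lam w_\lam Y_{\lam'}$ (or just $X_\lam w_\lam$) on the right. Neither Lemma \ref{lm04}(1) in its stated one-sided forms nor a naive idempotent argument immediately kills it, because $w_{\lam'}$ sits in the way. I expect to resolve this by rewriting $Y_{\lam'}w_{\lam'}X_\lam$ in terms of the $Y$-Murphy basis (using $\tau(Y_{\lam'})=X_{\lam'}$ and the automorphism $\tau$, or equivalently passing to the conjugate partition where the dominance order reverses), so that the ideal $(\Z\Sym_n)^{\rhd\lam}$ becomes $(\Z\Sym_n)^{\lhd\lam'}$ relative to the $Y$-cellular structure and is then annihilated by $X_\lam$ on the right via the conjugate of Lemma \ref{lm04}(1). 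Making this passage precise — keeping track of $w_\lam$ versus $w_{\lam'}$ and the sign $(-1)^{\ell(w_\lam)}$ — is the one computation in the proof that requires care rather than routine manipulation.
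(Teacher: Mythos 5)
Your part (1) is correct and is essentially the paper's argument: $\lam'=(2^k,1^{n-2k})$, so $\gamma_{\t^{\lam'}}=[\lam']!=2^k$ by Lemma \ref{lm01}, the first equality follows from Lemma \ref{lm03}, the congruence from Lemma \ref{lm02}, and integrality is restored by noting both sides lie in $\Z\Sym_n$. (One small slip: $\t^{\lam}$ is the \emph{maximal}, not minimal, standard $\lam$-tableau under dominance, which is why no $\u\rhd\t^{\lam}$ exists — but your conclusion stands.)

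For part (2), however, you have misdiagnosed the difficulty, and this leads you down an unnecessary and shaky detour. You write that ``neither Lemma \ref{lm04}(1) in its stated one-sided forms nor a naive idempotent argument immediately kills it, because $w_{\lam'}$ sits in the way,'' and you then propose to re-express things via the automorphism $\tau$ and the conjugate cellular structure. But $w_{\lam'}$ does \emph{not} sit in the way: the paper explicitly records that $(\Z\Sym_n)^{\rhd\lam}$ is a \emph{two-sided ideal} of $\Z\Sym_n$. Hence if $r:=z_\lam-2^kX_{\tlam\t_\lam}\in(\Z\Sym_n)^{\rhd\lam}$, then $w_{\lam'}r\in(\Z\Sym_n)^{\rhd\lam}$ as well, and Lemma \ref{lm04}(1) gives $Y_{\lam'}(w_{\lam'}r)=0$. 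Left-multiplying the identity $X_\lam w_\lam Y_{\lam'}=2^kX_\lam w_\lam+r$ (which is exactly (1), using $X_{\tlam\t_\lam}=X_\lam w_\lam$) by $Y_{\lam'}w_{\lam'}$ therefore yields (2) in one line. The paper records this annihilation as $Y_{\lam'}w_{\lam'}(\Z\Sym_n)^{\rhd\lam}=0$ (attributed to Dipper--James and the $*$-anti-automorphism), but the content is the same. Your proposed passage to the $Y$-Murphy basis via $\tau$ and the conjugate partition is not needed, and the surrounding manipulations you sketch (e.g.\ inserting $Y_{\lam'}^2=2^kY_{\lam'}$, right-multiplying by $Y_{\lam'}$) never actually isolate the desired equality $Y_{\lam'}w_{\lam'}X_\lam w_\lam Y_{\lam'}=2^kY_{\lam'}w_{\lam'}X_\lam w_\lam$, since they all retain a $Y_{\lam'}$ on the right. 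So as written, part (2) of your proposal does not close; the missing observation is simply that $(\Z\Sym_n)^{\rhd\lam}$ is a two-sided ideal.
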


\begin{proof} By Lemma \ref{lm03} and Lemma \ref{lm01}, we get that $z_{\lam}=2^{k}f_{\tlam\t_{\lam}}$.
Applying Lemma \ref{lm02}, we get that $$
z_{\lam}=2^{k}f_{\tlam\t_{\lam}}\equiv 2^{k}X_{\tlam\t_{\lam}}\!\!\pmod{(\Q\Sym_n)^{\rhd\lam}}.
$$
Since $z_{\lam}, 2^{k}X_{\tlam\t_{\lam}}\in\Z\Sym_n$, we deduce that $$
z_{\lam}=2^{k}f_{\tlam\t_{\lam}}\equiv 2^{k}X_{\tlam\t_{\lam}}\!\!\pmod{(\Z\Sym_n)^{\rhd\lam}}.
$$
This proves (1). Note that $w_{\lam}^{-1}=w_{\lam'}$. By \cite[(4.1)]{DJ1} and applying the anti-automorphism $*$ we know that $Y_{\lam'}w_{\lam'}(\Z\Sym_n)^{\rhd\lam}=0$.  Now (2) follows from this equality and (1).
\end{proof}

Let $\lam,\mu$ be two compositions of $n$. A $\lam$-tableau of type
$\mu$ is a map $\bS: [\lam]\rightarrow\{1,2,\cdots,d\}$ such that
$\mu_i=\#\{\gamma\in[\lam]|\bS(\gamma)=i\}$ for $i\geq 1$. A
$\lam$-tableau $\bS$ of type $\mu$ is row semistandard if the
entries in each row of $\bS$ are non-decreasing from left to right;
$\bS$ is semistandard if (i) $\lam$ is a partition; and (ii) $\bS$
is row semistandard and the entries in each column of $\bS$ are
strictly increasing from top to bottom. If $\t\in\std(\lam)$ then we
define $\mu(\t)$ to be the $\lam$-tableau obtained from $\t$ by
replacing each entry $i$ in $\t$ by $r$ if $i$ appears in row $r$ of
$\t^{\mu}$. It is clear that $\mu(\t)$ is a row semistandard
$\lam$-tableau of type $\mu$. Recall our definition (see Section 2) of $X_{\lam}, Y_{\lam}$ for
each composition $\lam$. For each integer $i$ with $0\leq i\leq n$,
we set
$$ X_{i}:=X_{(i,n-i)},\quad  Y_{i}:=Y_{(i,n-i)}.
$$
For any $h\in K\Sym_n$, we use $\<h\>_0$ to denote the two-sided ideal of $K\Sym_n$ generated by $h$.
The next theorem is the first step in the direction towards the proof of Theorem \ref{conj}.

\begin{theorem} \label{mainthm} Let $K$ be a field of characteristic other than two. Let $a$ be an integer with $0\leq a\leq n/2$. Then we have that $$ \<X_{n-a}\>_0=\bigl(K\Sym_n\bigr)^{\unrhd(n-a,a)}.
$$
\end{theorem}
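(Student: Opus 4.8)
The plan is to prove the two inclusions $\<X_{n-a}\>_0\subseteq (K\Sym_n)^{\unrhd(n-a,a)}$ and $(K\Sym_n)^{\unrhd(n-a,a)}\subseteq\<X_{n-a}\>_0$ separately, where the partition $\lambda=(n-a,a)$ plays the central role. For the first inclusion I would observe that $X_{n-a}=X_{(n-a,a)}$ is, up to the obvious embedding, $X_{\lambda}$ itself, and that $X_{\lambda}$ lies in $(K\Sym_n)^{\unrhd\lambda}$: indeed by the Murphy basis theory recalled in Section 2, $X_{\lambda}=X_{\t^{\lambda}\t^{\lambda}}$ is a Murphy basis element labelled by $\lambda$, and the submodule $(K\Sym_n)^{\unrhd\lambda}$ spanned by Murphy basis elements labelled by $\mu\unrhd\lambda$ is a two-sided ideal. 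Since $(n-a,a)$ has only two parts, the dominance order here is especially transparent: $\mu\unrhd(n-a,a)$ means $\mu_1\geq n-a$, i.e. $\mu$ is one of $(n),(n-1,1),\dots,(n-a,a)$. Hence $\<X_{n-a}\>_0\subseteq(K\Sym_n)^{\unrhd(n-a,a)}$ is immediate.

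The substance is the reverse inclusion. The strategy is to show that every cell module (Specht module) $S^\mu$ with $\mu\unrhd(n-a,a)$ is ``hit'' by the ideal generated by $X_{n-a}$, and more precisely that the Murphy basis elements $X_{\u\v}$ with $\Shape(\u)=\Shape(\v)=\mu\unrhd(n-a,a)$ all lie in $\<X_{n-a}\>_0$. The key device is the Dipper--James--Murphy element $z_{\lambda}=X_{\lambda}w_{\lambda}Y_{\lambda'}$ from the excerpt, together with Lemma \ref{keylem}: for $\lambda=(n-k,k)$ we have $z_{\lambda}=2^k f_{\t^{\lambda}\t_{\lambda}}\equiv 2^k X_{\t^{\lambda}\t_{\lambda}}\pmod{(\Z\Sym_n)^{\rhd\lambda}}$. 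Since $\ch K\neq 2$, the factor $2^k$ is invertible, so modulo $(K\Sym_n)^{\rhd\lambda}$ the element $X_{\t^{\lambda}\t_{\lambda}}$ is a scalar multiple of $X_{\lambda}w_{\lambda}Y_{\lambda'}$, which manifestly lies in $\<X_{n-a}\>_0$ when $\lambda=(n-a,a)$ (it starts with a left multiple $X_{\lambda}=X_{n-a}$). Thus $X_{\t^{\lambda}\t_{\lambda}}\in\<X_{n-a}\>_0+(K\Sym_n)^{\rhd\lambda}$, and since $(K\Sym_n)^{\unrhd(n-a,a)}=KX_{\t^{\lambda}\t_{\lambda}}\oplus(K\Sym_n)^{\rhd\lambda}$ as a two-sided ideal filtration (with $(K\Sym_n)^{\rhd(n-a,a)}=(K\Sym_n)^{\unrhd(n-a+1,a-1)}$), one can set up a downward induction on $a$: the inductive hypothesis gives $(K\Sym_n)^{\rhd(n-a,a)}=\<X_{n-a+1}\>_0$, and then $X_{n-a+1}\in\<X_{n-a}\>_0$ by the elementary symmetric-group fact noted just before Lemma \ref{xx3.3} (a symmetrizer for a larger first part factors through one for a smaller first part — this is exactly the $b=0$ case of Lemma \ref{xx3.1}'s analogue, or directly: $X_{(n-a,a)}$ right-multiplied by an appropriate coset sum gives $X_{(n-a+1,a-1)}$). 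Combining, $(K\Sym_n)^{\unrhd(n-a,a)}\subseteq\<X_{n-a}\>_0$.

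The main obstacle I anticipate is bookkeeping the two-sided ideal structure cleanly: one must be sure that $\<X_{n-a}\>_0$ absorbs the ``correction term'' coming from $(K\Sym_n)^{\rhd\lambda}$ at each stage, which is why the downward induction on $a$ (from $a=0$, where $X_n=X_{(n)}$ is the full symmetrizer and $(K\Sym_n)^{\unrhd(n)}=KX_{(n)}$, upward to the given $a$) is the right organizing principle rather than a single-shot argument. A second, minor, technical point is that $X_{n-a}$ as defined is the symmetrizer of the Young subgroup $\Sym_{\{1,\dots,n-a\}}\times\Sym_{\{n-a+1,\dots,n\}}$ inside $\Sym_n$, so one should state at the outset the identification $X_{n-a}=X_{(n-a,a)}=X_{\lambda}$ in the notation of Section 2, to legitimately invoke Lemma \ref{keylem} and Lemma \ref{lm04}. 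Once these are in place the proof is a short assembly: elementary inclusion one way, and for the other way the $2^k$-invertibility (genuinely using $\ch K\neq 2$) together with $z_{\lambda}\equiv 2^k X_{\lambda w_\lambda}$ modulo higher terms, fed into the induction.
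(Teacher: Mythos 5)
Your strategy---a downward induction on $a$ built around the single element $X_{\t^\lambda\t_\lambda}$---is genuinely different from the paper's dimension-counting argument, but as written it has a real gap, and the auxiliary steps do not do the work you attribute to them. The identity $(K\Sym_n)^{\unrhd(n-a,a)}=KX_{\t^\lambda\t_\lambda}\oplus(K\Sym_n)^{\rhd\lambda}$ is false unless $a=0$: the quotient $(K\Sym_n)^{\unrhd\lambda}/(K\Sym_n)^{\rhd\lambda}$ has dimension $(\#\Std(\lambda))^2$ and is spanned by the classes of all $X_{\s\t}$ with $\s,\t\in\Std(\lambda)$. Moreover, the fact you extract from Lemma~\ref{keylem}---that $X_{\t^\lambda\t_\lambda}\in\<X_{n-a}\>_0+(K\Sym_n)^{\rhd\lambda}$---is vacuous without any invocation of $z_\lambda$ or $2^k$-invertibility, since $X_{\t^\lambda\t_\lambda}=d(\t^\lambda)^{-1}X_\lambda d(\t_\lambda)=X_\lambda w_\lambda$ lies in $\<X_{n-a}\>_0$ on the nose, as does every $X_{\s\t}$ with $\s,\t\in\Std(\lambda)$. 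So in your plan the hypothesis $\ch K\neq 2$ is doing nothing; in the paper the $2^k$ from Lemma~\ref{keylem} is essential precisely because it is applied to the much subtler sums $\sum_{\t\in S_0(\mu)}X_\mu d(\t)$ for partitions $\mu$ strictly dominating $(n-a,a)$, not to $X_\lambda w_\lambda$.

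The serious gap is the inductive step $X_{n-a+1}\in\<X_{n-a}\>_0$, i.e.\ $X_{(n-a+1,a-1)}\in\<X_{(n-a,a)}\>_0$. Neither of your suggested justifications is correct. The remark preceding Lemma~\ref{xx3.3} concerns the Brauer-algebra elements $E_{a,0}$, which are antisymmetrizers attached to a \emph{nested} chain $\Sym_{\{1,\dots,a-1\}}\subseteq\Sym_{\{1,\dots,a\}}$, and the containment of Young subgroups is what makes that case trivial. Here, $\Sym_{(n-a,a)}$ and $\Sym_{(n-a+1,a-1)}$ are not nested, and for size reasons no conjugate of $\Sym_{(n-a,a)}$ sits inside $\Sym_{(n-a+1,a-1)}$; hence $X_{(n-a+1,a-1)}$ is not obtained from $X_{(n-a,a)}$ by multiplying by a coset sum. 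In fact, the inclusion $X_{(n-a+1,a-1)}\in\<X_{(n-a,a)}\>_0$ is essentially equivalent to the theorem at $a-1$ combined with the ideal filtration, so assuming it closes the induction circularly. What Murphy's theory of semistandard tableaux actually delivers, as the paper uses at~(\ref{eqa201}), is that certain \emph{sums} $\sum_{\t\in S_0(\mu)}X_\mu d(\t)$ with $\mu\unrhd(n-a,a)$ lie in $K\Sym_n\cdot X_{n-a}$; it does not produce $X_\mu$ itself. The paper's three-step argument is built exactly around this: it shows those sums are nondegenerate modulo $(K\Sym_n)^{\rhd\mu}$ (this is where $2^k$ and $\ch K\neq 2$ enter, via Step~1), and then counts dimensions to conclude, avoiding the need to show $X_\mu\in\<X_{n-a}\>_0$ at any intermediate stage.
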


\begin{proof} By the cellular structure of $K\Sym_n$, we see that $\<X_{n-a}\>_0\subseteq\bigl(K\Sym_n\bigr)^{\unrhd(n-a,a)}$ and \begin{equation}\label{dfnna}
\dim\bigl(K\Sym_n\bigr)^{\unrhd(n-a,a)}=n_a:=\sum_{(n-a,a)\unlhd\lam\vdash n}\Bigl(\#\Std(\lam)\Bigr)^2.
\end{equation}
To prove the theorem, it suffices to find at least $n_a$ $K$-linear independent elements in the two-sided ideal $\<X_{n-a}\>_0$.

Let $\lam\vdash n$ be a partition such that $\lam\unrhd \mu:=(n-a,a)$. Then $\lam:=(n-k,k)$, where $k\in\Z$ with
$0\leq k\leq a\leq n/2$. Let $\bS_k$ be the following (unique) semistandard $\lam$-tableau of type $\mu$:
$$\bS_k:=\begin{matrix}
&\overbrace{1,1,\dots,1,}^{\text{$n-a$ copies}}&\overbrace{2,\dots,2,}^{\text{$a-k$ copies}}\\
&\underbrace{2,\dots,2}_{\text{$k$ copies}}\quad &
\end{matrix}
$$
We define $S_0(\lam):=\bigl\{\t\in\Std(\lam)\bigm|\mu(\t)=\bS_k\bigr\}$. By \cite[Section 7]{Murphy:basis}, \begin{equation}\label{eqa201}
\sum_{\t\in S_0(\lam)}X_{\lam}d(\t)\in (K\Sym_n)X_{n-a}\subseteq\<X_{n-a}\>_0.
\end{equation}
It is clear that $S_0(\lam)\neq\emptyset$. We divide the remaining proof into three steps:\medskip

{\it Step 1.} We claim that $Y_{\lam'}w_{\lam'}X_{\lam}\Bigl(\sum_{\t\in S_0(\lam)}d(\t)\Bigr)K\Sym_n=
Y_{\lam'}w_{\lam'}X_{\lam}K\Sym_n$.

Let $\t_0\in S_0(\lam)$ such that $\ell(d(\t_0))$ is maximal. Then
$\ell(d(\s))\leq\ell(d(\t_0))$ for any $\s\in S_0(\lam)$. Furthermore, by \cite[Lemma 1.5]{DJ1}, $$
\ell(w_{\lam})=\ell(d(\t_0))+\ell(d(\t_0)^{-1}w_{\lam}).
$$
We set $w:=d(\t_0)^{-1}w_{\lam}$. By Lemma \ref{lm04}, we deduce that $X_{\lam}d(\s)wY_{\lam'}=0$ for
any $\t_0\neq\s\in S_0(\lam)$. Now multiplying $wY_{\lam'}$ and applying Lemma \ref{lm04} and Lemma \ref{keylem},
we get that $$\begin{aligned}
&\quad\,Y_{\lam'}w_{\lam'}X_{\lam}\Bigl(\sum_{\t\in S_0(\lam)}d(\t)\Bigr)wY_{\lam'}\\
&=Y_{\lam'}w_{\lam'}X_{\lam}w_{\lam}Y_{\lam'}
&=2^{k}Y_{\lam'}w_{\lam'}X_{\lam}w_{\lam}.
\end{aligned}
$$
Since $\ch K\neq 2$, $2^{k}$ is invertible in $K$. The above equality implies that $$
Y_{\lam'}w_{\lam'}X_{\lam}\Bigl(\sum_{\t\in S_0(\lam)}d(\t)\Bigr)K\Sym_n=Y_{\lam'}w_{\lam'}X_{\lam}w_{\lam}K\Sym_n=
Y_{\lam'}w_{\lam'}X_{\lam}K\Sym_n,
$$
as required. This proves our claim. Furthermore, it is well-known that the elements in $\{Y_{\lam'}w_{\lam'}
X_{\lam}d(\t)|\t\in\Std(\lam)\}$ form a $K$-basis of $Y_{\lam'}w_{\lam'}X_{\lam}K\Sym_n$. In particular, we have
that $\dim Y_{\lam'}w_{\lam'}X_{\lam}K\Sym_n=\#\Std(\lam)$.
\medskip

{\it Step 2.} We define $$
N_{\lam}:=\frac{X_{\lam}\Bigl(\sum\limits_{\t\in S_0(\lam)}d(\t)\Bigr)K\Sym_n}{X_{\lam}
\Bigl(\sum\limits_{\t\in S_0(\lam)}d(\t)\Bigr)K\Sym_n\bigcap(K\Sym_n)^{\rhd\lam}}.
$$
We claim that $n_{\lam}:=\dim N_{\lam}\geq\#\Std(\lam)$.

In fact, by Lemma \ref{lm04}, the left multiplication by $Y_{\lam'}w_{\lam'}$ induces a surjective homomorphism from
$N_{\lam}$ onto $Y_{\lam'}w_{\lam'}X_{\lam}\Bigl(\sum_{\t\in S_0(\lam)}d(\t)\Bigr)K\Sym_n$. By the main result of Step
1, we know that $Y_{\lam'}w_{\lam'}X_{\lam}\Bigl(\sum_{\t\in S_0(\lam)}d(\t)\Bigr)K\Sym_n=
Y_{\lam'}w_{\lam'}X_{\lam}K\Sym_n$ and has dimension $\#\Std(\lam)$. It follows that
$n_{\lam}:=\dim N_{\lam}\geq\#\Std(\lam)$, as required. This proves our claim.

As a consequence, we can find $u_1,\cdots,u_{n_{\lam}}\in\Sym_n$ such that the natural image of the following elements
$$
X_{\lam}\Bigl(\sum_{\t\in S_0(\lam)}d(\t)\Bigr)u_1,\cdots,X_{\lam}\Bigl(\sum_{\t\in S_0(\lam)}d(\t)\Bigr)u_{n_{\lam}}
$$
in $N_{\lam}$ form a $K$-basis of $N_{\lam}$. For each $\s\in\Std(\lam)$ and each integer $1\leq j\leq n_{\lam}$,
we define $$
v_{\s,j}:=d(\s)^{-1}X_{\lam}\Bigl(\sum_{\t\in S_0(\lam)}d(\t)\Bigr)u_j. $$
By construction, it is clear that $v_{\s,j}\in\<X_{n-a}\>_0$.
\medskip

{\it Step 3.} We claim that the elements in the following set \begin{equation}\label{eq303}
\bigl\{v_{\s,j}\bigm|\s\in\Std(\lam), \lam\vdash n, 1\leq j\leq n_{\lam}\bigr\}
\end{equation}
are $K$-linearly independent.

In fact, assume that \begin{equation}\label{eqa404}
\sum_{\substack{\lam\vdash n, \s\in\Std(\lam)\\ 1\leq j\leq n_{\lam}}}c_{\s,j}v_{\s,j}=
\sum_{\substack{\lam\vdash n, \s\in\Std(\lam)\\ 1\leq j\leq n_{\lam}}}c_{\s,j}d(\s)^{-1}X_{\lam}\Bigl(\sum_{\t\in S_0(\lam)}d(\t)\Bigr)u_j=0, \end{equation}
where $c_{\s,j}\in K$ for each $\s, j$. Set $$
\Sigma_0:=\bigl\{(\s,j)\bigm|\s\in\Std(\lam), \lam\vdash n, 1\leq j\leq n_{\lam}, c_{\s,j}\neq 0\bigr\}.
$$
Suppose that $\Sigma_0\neq\emptyset$. We choose an $\s$ such that
$(\s,j)\in\Sigma_0$ for some $j$ and $\Shape(\s)=\lam$ is minimal
under the dominance order ``$\lhd$". By the property of the cellular
basis $\{X_{\s,\t}\}$ we know that $$ X_{\lam}\Bigl(\sum_{\t\in
S_0(\lam)}d(\t)\Bigr)u_j\equiv\sum_{\t\in\Std(\lam)}r_{j,\t}X_{\tlam,\t}\pmod{(K\Sym_n)^{\rhd\lam}},
$$
and hence $$
v_{\s,j}\equiv \sum_{\t\in\Std(\lam)}r_{j,\t}X_{\s,\t}\pmod{(K\Sym_n)^{\rhd\lam}}.
$$
By the main result in Step 2, we know that $X_{\lam}\Bigl(\sum_{\t\in S_0(\lam)}d(\t)\Bigr)u_j\not\in(K\Sym_n)^{\rhd\lam}$. It follows that at least one of those coefficients
$r_{j,\t}$ is nonzero. Combing this fact and the minimality of $\lam$ and (\ref{eqa404}) we can deduce that for each $\s\in\Std(\lam)$ and each $1\leq j\leq n_{\lam}$, $$
\sum_{1\leq j\leq n_{\lam}}c_{\s,j}d(\s)^{-1}X_{\lam}\Bigl(\sum_{\t\in S_0(\lam)}d(\t)\Bigr)u_j\in (K\Sym_n)^{\rhd\lam}.
$$
Now applying the main result of Step 2 again, we deduce that $c_{\s,j}=0$ for each $1\leq j\leq n_{\lam}$, a contradiction to the definition of $\Sigma_0$. This completes the proof of our claim.

As a consequence, we see that the two-sided ideal $\<X_{n-a}\>_0$ contains at least $$
\sum_{(n-a,a)\unlhd\lam\vdash n}n_{\lam}(\#\Std(\lam))\geq \sum_{(n-a,a)\unlhd\lam\vdash n}(\#\Std(\lam))^2=n_a
$$
$K$-linearly independent elements. This implies that we must have that $$
\dim \<X_{n-a}\>_0=n_a=\dim (K\Sym_n)^{\unrhd (n-a,a)}
$$
and hence $\<X_{n-a}\>_0=(K\Sym_n)^{\unrhd (n-a,a)}$. This completes the proof of the theorem.
\end{proof}

Our second step in the direction towards Theorem \ref{conj} is the proof of the following purely combinatorial identity.

\begin{theorem} \label{mainthm2} Let $K$ be a field of arbitrary characteristic. Let $S^{(m+1,m+1)}$ (respectively, $S^{(m+1-k,k)}$) be the Specht module of the symmetric group algebra $K\Sym_{2m+2}$ (respectively, $K\Sym_{m+1}$) corresponding to $(m+1,m+1)$ (respectively, $(m+1-k,k$), where $0\leq k\leq [(m+1)/2]$. Then we have that $$
\sum_{k=0}^{[(m+1)/2]}\bigl(\dim_K S^{(m+1-k,k)}\bigr)^2=\dim_K S^{(m+1,m+1)}.
$$
\end{theorem}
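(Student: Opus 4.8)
The plan is to interpret the combinatorial identity representation-theoretically via the branching rule and the restriction functor from $K\Sym_{2m+2}$ to the Young subgroup $K\Sym_{m+1}\times K\Sym_{m+1}$, which is exactly the structure encoded in the preceding material on the Brauer algebra. First I would invoke the classical Littlewood--Richardson rule (valid over $\Z$, hence over any field at the level of composition factors in characteristic zero, which suffices since the $\dim_K$ of a Specht module is characteristic-free) to compute the multiplicity of $S^{(m+1,m+1)}$ in the induced module $\bigl(S^{(m+1-k,k)}\boxtimes S^{(m+1-k,k)}\bigr)\!\uparrow_{\Sym_{m+1}\times\Sym_{m+1}}^{\Sym_{2m+2}}$; by the two-row LR rule this multiplicity is $1$ precisely when $0\le k\le[(m+1)/2]$ and $0$ otherwise, while $S^{(m+1-j,k)}\boxtimes S^{(m+1-j',k')}$ with $\{j,k\}\neq\{j',k'\}$ never contributes a copy of $(m+1,m+1)$ to the induced module. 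Equivalently, by Frobenius reciprocity, $\dim_K\Hom\bigl(S^{(m+1,m+1)}\!\downarrow,\,S^{\mu}\boxtimes S^{\nu}\bigr)$ is $1$ if $\mu=\nu=(m+1-k,k)$ for some valid $k$ and $0$ otherwise.

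Next I would make this dimension-count precise. Restricting $S^{(m+1,m+1)}$ from $\Sym_{2m+2}$ to $\Sym_{m+1}\times\Sym_{m+1}$ and decomposing over $\C$ (or $\Q$) into irreducibles $S^{\mu}\boxtimes S^{\nu}$, the above shows the decomposition is $\bigoplus_{k=0}^{[(m+1)/2]}S^{(m+1-k,k)}\boxtimes S^{(m+1-k,k)}$, each with multiplicity one. Taking dimensions of both sides gives immediately
$$
\dim_\Q S^{(m+1,m+1)}=\sum_{k=0}^{[(m+1)/2]}\bigl(\dim_\Q S^{(m+1-k,k)}\bigr)^2.
$$
Since for any partition $\lambda$ of $N$ the dimension $\dim_K S^{\lambda}=\#\Std(\lambda)$ is independent of the field $K$ (the Specht module has an integral form with $\Z$-basis the standard polytabloids, so $\dim_K S^\lambda=\dim_\Q S^\lambda$ for every $K$), the identity holds over an arbitrary field, which is the claimed statement.

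The main obstacle is verifying the two-row Littlewood--Richardson computation cleanly: I must check that among all $\mu,\nu\vdash m+1$ with $\ell(\mu),\ell(\nu)\le 2$, the only pairs for which $c^{(m+1,m+1)}_{\mu\nu}\neq 0$ are $\mu=\nu=(m+1-k,k)$ with $0\le k\le[(m+1)/2]$, and that in those cases $c^{(m+1,m+1)}_{\mu\nu}=1$. For $(m+1,m+1)$ this is transparent: a Littlewood--Richardson filling of the skew shape $(m+1,m+1)/\mu$ of content $\nu$ with $\nu$ having at most two rows forces, by the column-strictness and lattice-word conditions, that $\nu_1=m+1-\mu_2$ and $\nu_2=m+1-\mu_1$, so $\nu=\mu'$-complemented-in-the-rectangle, and a further check shows this forces $\mu=\nu$; positivity of $\nu_2$ and $\nu_1\ge\nu_2$ pins down the range of $k$. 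An alternative route that sidesteps any subtlety is to compute the restriction of $S^{(m+1,m+1)}$ directly using iterated application of the ordinary branching rule (removing one box at a time), or to use the known plethysm/character identity $s_{(N,N)}=\sum_{k}s_{(N-k,k)}\!\cdot\! s_{(N-k,k)}$ minus lower terms expressed via the Jacobi--Trudi determinant; I would present whichever of these is shortest, most likely the direct LR argument together with the characteristic-independence of $\#\Std(\lambda)$.
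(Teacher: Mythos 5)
Your proof is correct, but it takes a genuinely different route from the paper's. The paper establishes the identity by a dimension count inside the semisimple Brauer algebra $\bb_{m+1}(m)_{\C}$: it combines the Doty--Hu integral basis (which gives $\dim\Ker\varphi_{\C}=\dim S^{(m+1,m+1)}_{\C}$), the Lehrer--Zhang characteristic-zero second fundamental theorem ($\Ker\varphi_{\C}=\langle E_{[(m+1)/2]}\rangle$), the vanishing $\langle E_{[(m+1)/2]}\rangle\cap\bb_{m+1}^{(1)}=\{0\}$ (proved via Wedderburn theory and \cite[Corollary 5.13]{LZ2}), and Theorem \ref{mainthm} (which identifies $\langle Y_{a_0}\rangle_0$ with $(\C\Sym_{m+1})^{\unrhd(a_0,m+1-a_0)}$) to produce $\dim\Ker\varphi_{\C}=\sum_k\bigl(\dim S^{(m+1-k,k)}\bigr)^2$. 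Your argument instead restricts $S^{(m+1,m+1)}$ from $\Sym_{2m+2}$ to $\Sym_{m+1}\times\Sym_{m+1}$ and computes the Littlewood--Richardson coefficients $c^{(m+1,m+1)}_{\mu\nu}$ directly: both $\mu,\nu$ must be two-rowed since the $2\times(m+1)$ rectangle is; in a filling of $(m+1,m+1)/\mu$ the top row is forced to be all $1$'s by column-strictness, and the column-strictness plus lattice-word constraints on the bottom row then force $\mu=\nu=(m+1-k,k)$, with a unique filling. This yields the multiplicity-one decomposition $\bigoplus_k S^{(m+1-k,k)}\boxtimes S^{(m+1-k,k)}$ over $\Q$, and taking dimensions and using $\dim_K S^\lambda=\#\Std(\lambda)$ gives the identity over any $K$. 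Your approach is strictly more elementary and self-contained (it requires no Brauer algebra input and no reliance on the characteristic-zero second fundamental theorem), which is arguably preferable; the paper's version has the advantage of arising organically from the structure it is tracking, making the role of the identity in the subsequent Theorem \ref{mainthm3} more transparent. One small suggestion: in the final write-up you should spell out the forced-filling argument rather than leave it as ``a further check shows this forces $\mu=\nu$,'' since the lattice-word constraint ($k\ge l$) and the column-strictness constraint ($l\ge k$) are each one line and make the claim airtight.
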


\begin{proof} We shall give a representation theoretic argument to prove the above combinatorial identity. Note that the dimension of each Specht module is independent of the field. We can assume without loss of generality that $K=\C$. \smallskip

We use $\bb_{m+1}(m)_{\C}$ to denote the specialized Brauer algebra over $\C$ with parameter $m$. In other words, we assume that $n=m+1$ for the moment. Then by \cite{DWH}, $\bb_{m+1}(m)_{\C}$ is semisimple. We have a natural surjective homomorphism $\varphi_{\C}: \bb_{m+1}(m)_{\C}\twoheadrightarrow\End_{\C O(V_{\C})}\bigl(V_{\C}^{\otimes m+1}\bigr)$,
where $V_{\C}$ is an orthogonal $\C$-vector space with dimension $m$. Note that $\C\Sym_{2m+2}$ is semisimple and each Specht module $\C\Sym_{2m+2}$ over is irreducible. By \cite{DH} we see that \begin{equation}\label{fstdim} \dim\Ker\varphi_{\C}=\dim_{\C}S_{\C}^{(m+1,m+1)}, \end{equation}
where we use $S_{\C}^{(m+1,m+1)}$ to denote the simple (Specht) module of $\C\Sym_{2m+2}$ corresponding to the partition $(m+1,m+1)$. As before, we use $\bb_{m+1}^{(1)}$ to denote the two-sided ideal of $\bb_{m+1}(m)_{\C}$ generated by $e_1$. Equivalently, $\bb_{m+1}^{(1)}$ is the subspace of $\bb_{m+1}(m)_{\C}$ spanned by those Brauer $(m+1)$-diagrams which contain horizontal edges. We claim that $\Ker\varphi_{\C}\cap\bb_{m+1}^{(1)}=\{0\}$.

In fact, by \cite[Theorem 4.3]{LZ2}, we know that $\Ker\varphi_{\C}$ is equal to the two-sided ideal $\<E_{[(m+1)/2]}\>$. It suffices to show that $\<E_{[(m+1)/2]}\>\cap\bb_{m+1}^{(1)}=\{0\}$. Since $\bb_{m+1}^{(1)}$ is a two-sided ideal of the semisimple $\C$-algebra $\bb_{m+1}(m)_{\C}$, we have that $\bb_{m+1}(m)_{\C}=\bb_{m+1}^{(1)}\oplus I_0$ for some two-sided ideal $I_0$ of $\bb_{m+1}(m)_{\C}$. In particular, $xI_0=0=I_0x$ for any $x\in\bb_{m+1}^{(1)}$. We define $$
J_0:=\bigl\{y\in\bb_{m+1}(m)_{\C}\bigm|\text{$yx=0=xy$ for any $x\in\bb_{m+1}^{(1)}$}\bigr\}.
$$
Then $J_0$ is a two-sided ideal of $\bb_{m+1}(m)_{\C}$ and $I_0\subseteq J_0$. Thus $\bb_{m+1}(m)_{\C}=J_0+\bb_{m+1}^{(1)}$.
Since $J_0x=0$ for any $x\in\bb_{m+1}^{(1)}$, it follows that every right simple submodule of $J_0$ is a simple module over $\bb_{m+1}(m)_{\C}/\bb_{m+1}^{(1)}\cong\C\Sym_{m+1}$. Using the Wedderburn theorem for semisimple algebras we get that $$
J_0=\bigoplus_{\lam\vdash m+1}\bigl(S_{\C}^{\lam}\bigr)^{\oplus a_{\lam}},
$$
where $a_{\lam}\in\N$ such that $0\leq a_{\lam}\leq\dim S_{\C}^{\lam}$ for each $\lam\vdash m+1$. As a result, we deduce that $$
\dim J_0\leq\sum_{\lam\vdash m+1}(\dim S_{\C}^{\lam})^2=\dim_{\C}\C\Sym_{m+1}=\dim_{\C}\bb_{m+1}(m)_{\C}-\dim_{\C}\bb_{m+1}^{(1)},
$$
because $\bb_{m+1}(m)_{\C}/\bb_{m+1}^{(1)}\cong\C\Sym_{m+1}$. This in turn forces $$
J_0\bigoplus\bb_{m+1}^{(1)}=J_0+\bb_{m+1}^{(1)}=\bb_{m+1}(m)_{\C}.
$$
In particular, $J_0\cap\bb_{m+1}^{(1)}=\{0\}$. On the other hand, by \cite[Corollary 5.13]{LZ2} we know that $\<E_{[(m+1)/2]}\>
\subseteq J_0$, from which our claim follows at once.\medskip

We write $a_0:=m+1-[(m+1)/2]$. Then $a_0\geq m+1-a_0$ and hence $(a_0,m+1-a_0)$ is a partition of $m+1$. Note that (by Lemma \ref{lm00})
$$\begin{aligned} & \langle E_{[(m+1)/2]}\rangle=\langle E_{[(m+1)/2],a_0}\rangle=\langle E_{a_0,[(m+1)/2]}\rangle=\langle E_{a_0}\rangle,\\
& E_{a_0}=E_{a_0,m+1-a_0}\equiv Y_{a_0,m+1-a_0}\equiv Y_{a_0}\pmod{\bb_{m+1}^{(1)}}. \end{aligned} $$

By Theorem \ref{mainthm}, we know that
$\<Y_{a_0}\>_0=(\C\Sym_{m+1})^{\unrhd(a_0,m+1-a_0)}$. Combining these fact together with the equality $$
\langle E_{[(m+1)/2]}\rangle\cap\bb_{m+1}^{(1)}=\Ker\varphi_{\C}\cap\bb_{m+1}^{(1)}=\{0\}$$ which we have just proved, we can deduce that $$\begin{aligned}
\dim\Ker\varphi_{\C}&=\dim\langle E_{[(m+1)/2]}\rangle=\dim(\C\Sym_{m+1})^{\unrhd(a_0,m+1-a_0)}\\
&=\sum_{k=0}^{[(m+1)/2]}\bigl(\dim_{\C} S_{\C}^{(m+1-k,k)}\bigr)^2.
\end{aligned}$$
Finally, comparing the above equality with (\ref{fstdim}), we complete the proof of the theorem.
\end{proof}

By the well-known hook formula (cf. \cite{M:Ulect}) for the dimension of Specht modules, we get that $$
\dim S^{(m+1)}=1,\quad \dim S^{(m,1)}=m,
$$
and for each $2\leq k\leq [(m+1)/2]$, $$\begin{aligned}
\dim S^{(m+1-k,k)}&=\frac{(m+1)!}{\prod_{(i,j)\in[(m+1-k,k)]}h_{i,j}^{(m+1-k,k)}}\\
&=\frac{(m+1)m\cdots (m-k+4)(m-k+3)(m-2k+2)}{k!},
\end{aligned} $$
where $h_{i,j}^{\lam}:=\lam_i+\lam'_j-i-j+1$ is the $(i,j)$-hook length of the partition $\lam$. Similarly, $$\begin{aligned}
\dim S^{(m+1,m+1)}&=\frac{(2m+2)!}{\prod_{(i,j)\in[(m+1,m+1)]}h_{i,j}^{(m+1,m+1)}}\\
&=\frac{(2m+2)(2m+1)\cdots (m+3)}{(m+1)!}.
\end{aligned}$$

As a result, we have the following corollary.

\begin{corollary} \label{maincor} We have the following identity: $$\begin{aligned}
&\quad\,1+m^2+\sum_{k=2}^{[(m+1)/2]}\biggl(\frac{(m+1)m\cdots (m-k+3)(m+2-2k)}{k!}\biggr)^2\\
&=\frac{(2m+2)(2m+1)\cdots (m+3)}{(m+1)!}.
\end{aligned} $$
\end{corollary}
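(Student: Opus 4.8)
The plan is to obtain the identity simply by evaluating both sides of the combinatorial identity of Theorem~\ref{mainthm2} with the hook length formula. That theorem gives
$$
\sum_{k=0}^{[(m+1)/2]}\bigl(\dim_K S^{(m+1-k,k)}\bigr)^2=\dim_K S^{(m+1,m+1)},
$$
and since the dimension of a Specht module does not depend on the ground field, I may as well work over $\Q$ and compute these dimensions directly.

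First I would write down the hook lengths of a two-row Young diagram $(a,b)$ with $a\geq b$ and $a+b=m+1$: the first row contributes the factors $a+1,a,\dots,a-b+2$ in its first $b$ columns and $a-b,a-b-1,\dots,1$ in the remaining columns, while the second row contributes $b,b-1,\dots,1$; their product is $\dfrac{(a+1)!\,b!}{a-b+1}$. Dividing $(m+1)!$ by this product (with $a=m+1-k$, $b=k$) yields $\dim S^{(m+1-k,k)}=\dfrac{(m+1)m\cdots(m-k+3)(m+2-2k)}{k!}$ for $2\leq k\leq[(m+1)/2]$, together with the special values $\dim S^{(m+1)}=1$ and $\dim S^{(m,1)}=m$ for $k=0,1$; the same bookkeeping applied to the shape $(m+1,m+1)$ gives $\dim S^{(m+1,m+1)}=\dfrac{(2m+2)(2m+1)\cdots(m+3)}{(m+1)!}$. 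These are exactly the evaluations recorded in the paragraph preceding the corollary.

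Finally I would substitute these values into the identity of Theorem~\ref{mainthm2}, separating off the $k=0$ and $k=1$ summands (which contribute $1$ and $m^2$ to the left-hand side), to arrive directly at the asserted equality. There is no genuine difficulty at this stage: all of the substantive content lies in Theorem~\ref{mainthm2} (and ultimately in Theorem~\ref{mainthm}), and what remains is only the elementary hook-length count for two-row partitions, which is routine.
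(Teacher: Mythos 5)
Your proposal is correct and follows exactly the paper's own route: the paper's proof of the corollary is the single sentence that it ``follows directly from Theorem~\ref{mainthm2} and the hook length formulae,'' with the explicit hook-length evaluations $\dim S^{(m+1-k,k)}=\frac{(m+1)m\cdots(m-k+3)(m+2-2k)}{k!}$ and $\dim S^{(m+1,m+1)}=\frac{(2m+2)(2m+1)\cdots(m+3)}{(m+1)!}$ recorded in the paragraph immediately preceding it. You have simply filled in the routine two-row hook-length bookkeeping that the paper leaves implicit.
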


\begin{proof} This follows directly from Theorem \ref{mainthm2} and the hook length formulae for the dimensions of Specht modules over symmetric groups.
\end{proof}

Now we are in the final step to prove Theorem \ref{conj}.

\begin{theorem} \label{mainthm3} Let $K$ be an arbitrary field of characteristic other than two. Then the annihilator of $V^{\otimes n}$ in $\bb_n(m)$ is the two-sided ideal generated by $E_{[(m+1)/2]}$.
\end{theorem}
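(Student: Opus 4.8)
Set $c:=[(m+1)/2]$. The plan is to assemble this theorem from three results already in hand: Theorem~\ref{xx3.6}, that $\Ker\varphi=\langle E_0,E_1,\dots,E_c\rangle$; Theorem~\ref{mainthm}, identifying $\langle X_{(n-a,a)}\rangle_0$ with $(K\Sym_n)^{\unrhd(n-a,a)}$; and the computation inside the proof of Theorem~\ref{mainthm2}. Since $E_c\in\Ker\varphi$ by the first part of Theorem~\ref{LZ}, the inclusion $\langle E_c\rangle\subseteq\Ker\varphi$ is immediate, so the whole content is the reverse inclusion; for that it suffices to show $\dim_K\langle E_c\rangle\geq\dim_K\Ker\varphi$. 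Note from the outset that trying to deduce $E_i\in\langle E_c\rangle$ directly from Lemma~\ref{xx3.1} is hopeless: that lemma only relates an $E_{a,b}$ with $a+b=m+1$ to ones with $a+b=m$, and never two of the $E_i$ to one another. So the real issue is the dimension lower bound.

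First I would reduce to the case $n=m+1$. Under the standard unital embedding $\bb_{m+1}(m)\hookrightarrow\bb_n(m)$ that adjoins straight strands $m+2,\dots,n$, the diagram description in Definition~\ref{keydfn} together with the sign rule of Definition~\ref{dfn000} and Remark~\ref{rm} shows that adjoining straight strands adds neither horizontal edges nor crossings, so the $E_i$ computed in $\bb_{m+1}(m)$ is carried to the $E_i$ of $\bb_n(m)$. Using Theorem~\ref{xx3.6} for $n$ and for $m+1$, the identity $\langle E_0,\dots,E_c\rangle=\langle E_c\rangle$ inside $\bb_{m+1}(m)$ then forces the same identity inside $\bb_n(m)$, since the two-sided ideal of $\bb_n(m)$ generated by a subset of $\bb_{m+1}(m)$ depends only on the two-sided ideal that subset generates inside $\bb_{m+1}(m)$. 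This reduction is genuinely needed: for $n>m+1$ the image of $E_c$ in $\bb_n(m)/\bb_n^{(1)}$ is a $Y$-symmetriser attached to a composition with more than two parts, to which Theorem~\ref{mainthm} does not apply.

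Now assume $n=m+1$ and put $a_0:=m+1-c$, so $a_0\geq c$ and $(a_0,c)\vdash m+1$. Let $\pi\colon\bb_{m+1}(m)\twoheadrightarrow\bb_{m+1}(m)/\bb_{m+1}^{(1)}\cong K\Sym_{m+1}$ be the projection, whose kernel is spanned by the Brauer diagrams having a horizontal edge. Counting the diagrams in $\Bd(a_0,c)$ with no horizontal edge and reading off their signs from Definition~\ref{dfn000} shows, using $\langle E_{[(m+1)/2]}\rangle=\langle E_{a_0,c}\rangle$ from Lemma~\ref{lm00}, that $\pi(E_{a_0,c})=Y_{(a_0,c)}$, hence $\pi(\langle E_c\rangle)=\langle Y_{(a_0,c)}\rangle_0$. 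Applying the automorphism $\tau$ with $\tau(X_{(a_0,c)})=Y_{(a_0,c)}$, then Theorem~\ref{mainthm} (legitimate since $0\leq c\leq(m+1)/2$), and using that the Murphy basis is a free basis over every ring, we obtain
$$\dim_K\langle E_c\rangle\geq\dim_K\langle Y_{(a_0,c)}\rangle_0=\dim_K\langle X_{(a_0,c)}\rangle_0=\dim_K(K\Sym_{m+1})^{\unrhd(a_0,c)}=\sum_{k=0}^{c}\bigl(\dim_K S^{(m+1-k,k)}\bigr)^2,$$
the last step because the only partitions of $m+1$ dominating $(a_0,c)$ are the two-row partitions $(m+1-k,k)$ with $0\leq k\leq c$ and because $\dim_K S^\lambda=\#\Std(\lambda)$ over every field.

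It remains to match the two quantities. By the characteristic-independence of $\dim\Ker\varphi$ recorded in Section~2, together with Lehrer--Zhang's characteristic-zero theorem (the second part of Theorem~\ref{LZ}) and the computation in the proof of Theorem~\ref{mainthm2}, one has $\dim_K\Ker\varphi=\dim_{\C}\langle E_c\rangle_{\C}=\sum_{k=0}^{c}(\dim S^{(m+1-k,k)})^2$. Combining this with the displayed inequality gives $\dim_K\langle E_c\rangle\geq\dim_K\Ker\varphi$, and since $\langle E_c\rangle\subseteq\Ker\varphi$ we conclude $\langle E_c\rangle=\Ker\varphi$, as desired; as a bonus the sandwich shows $\langle E_c\rangle\cap\bb_{m+1}^{(1)}=\{0\}$ in every characteristic. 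I expect the only delicate computations to be the sign bookkeeping that identifies $\pi(E_{a_0,c})$ with $Y_{(a_0,c)}$ and the easy check that no partition of $m+1$ with three or more rows dominates $(a_0,c)$. The true crux, however, lies not in this step but in the inputs it consumes: Theorem~\ref{mainthm}, which is precisely what pumps enough linearly independent elements into $\langle E_c\rangle$ to reach $\dim\Ker\varphi$, and Theorem~\ref{mainthm2}, which pins down the target dimension.
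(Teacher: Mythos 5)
Your proposal is correct and follows essentially the same route as the paper: reduce to $n=m+1$, project onto $K\Sym_{m+1}$ so that $E_{a_0,c}$ becomes $\pm Y_{(a_0,c)}$, use Theorem~\ref{mainthm} (via $\tau$) to count the dimension of the resulting ideal, match it against $\dim\Ker\varphi$ via Theorem~\ref{mainthm2}, and conclude by a sandwich argument. If anything your write-up is slightly more careful than the paper's: you make explicit the invocation of Theorem~\ref{mainthm} (which the paper's proof of this theorem cites only obliquely through the phrase ``the $Y$-Murphy basis''), you record the $\tau$-translation between the $Y$- and $X$-ideals, and you verify that the only partitions of $m+1$ dominating $(a_0,c)$ are the two-row ones.
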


\begin{proof} By Theorem \ref{xx3.6}, it suffices to show that for each integer $i$ with $0\leq i<[(m+1)/2]$, $E_i$
lies in the two-sided ideal of $\bb_n(m)$ generated by
$E_{[(m+1)/2]}$. For simplicity, we write $a:=[(m+1)/2]$. By Lemma \ref{lm00}, we know that $\<E_{j}\>=\<E_{m+1-j}\>$ for each $0\leq j\leq m+1$.
Therefore, it suffices to show that for each integer $i$ with $0\leq i<a$, $E_{(m+1-i,i)}$ lies in the
two-sided ideal of $\bb_n(m)$ generated by $E_{m+1-a}:=E_{(m+1-a,a)}$. Note that for each
integer $0\leq i\leq m+1$, $E_i\in\bb_{m+1}(m)\subseteq\bb_n(m)$. Without loss of generality, we can assume that
$n=m+1$ henceforth. \smallskip

We consider the natural homomorphism $\varphi_K: \bb_{m+1}(m)\rightarrow\End_K(V^{\otimes m+1})$.  By Corollary \ref{maincor}, we have that $$\begin{aligned}
(m+1)_a&:=1+m^2+\sum_{k=2}^{[(m+1)/2]}\biggl(\frac{(m+1)m\cdots (m-k+3)(m+2-2k)}{k!}\biggr)^2\\
&=\frac{(2m+2)(2m+1)\cdots (m+3)}{(m+1)!}.
\end{aligned}$$
By \cite[Lemma 7.1]{DH}, Theorem \ref{mainthm2} and Corollary \ref{maincor}, we have that $$
\dim_K\Ann_{\bb_{m+1}(m)}(V^{\otimes m+1})=\dim_K\Ker\varphi_{K}=(m+1)_a.
$$
On the other hand, since $E_{m+1-a}\equiv Y_{m+1-a}\pmod{\bb_n^{(1)}}$, we deduce (from Theorem \ref{mainthm2}, Corollary \ref{maincor} and the $Y$-Murphy basis for $K\Sym_{m+1}$) that $\<E_{m+1-a}\>$ contains at least $(m+1)_a$ $K$-linearly independent elements. Since $\<E_{m+1-a}\>\subseteq\Ker\varphi_{K}$, it follows that
$\<E_{m+1-a}\>=\Ker\varphi_{K}$ and hence for each integer $0\leq i\leq [(m+1)/2]$, $
E_i\in\<E_{m+1-a}\>=\<E_a\>=\<E_{[(m+1)/2]}\>$,
as required. This completes the proof of the Theorem.
\end{proof}

The following corollary give a new integral basis for the annihilator of $V^{\otimes m+1}$ in $\bb_{m+1}(m)$.

\begin{corollary} \label{maincor2} Let $K$ be a field of characteristic other than two. Then the elements in the following set $$
\bigl\{d(\s)^{-1}E_{[(m+1)/2]}d(\t)\bigm|\s,\t\in\std(m+1-k,k), 0\leq k\leq [(m+1)/2]\bigr\}
$$
form a $K$-basis of $\Ann_{\bb_{m+1}(m)}(V^{\otimes m+1})$
\end{corollary}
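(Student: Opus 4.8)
The goal is to show that the set
$\{d(\s)^{-1}E_{[(m+1)/2]}d(\t)\mid \s,\t\in\std(m+1-k,k),\,0\leq k\leq [(m+1)/2]\}$
is a $K$-basis of $\Ann_{\bb_{m+1}(m)}(V^{\otimes m+1})$. First I would count: the cardinality of this set is exactly
$\sum_{k=0}^{[(m+1)/2]}(\#\std(m+1-k,k))^2 = \sum_{k=0}^{[(m+1)/2]}(\dim_K S^{(m+1-k,k)})^2$,
which by Theorem \ref{mainthm2} equals $\dim_K S^{(m+1,m+1)}$, and by \cite[Lemma 7.1]{DH} (as used in the proof of Theorem \ref{mainthm3}) this is precisely $\dim_K\Ann_{\bb_{m+1}(m)}(V^{\otimes m+1})=(m+1)_a$. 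So the set has the right size, and it suffices to prove either that its elements are $K$-linearly independent or that they span the annihilator; linear independence is the cleaner target.

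For linear independence, the plan is to reuse the strategy already deployed in the proof of Theorem \ref{mainthm}, Step 3, transported through the congruence $E_{m+1-a}\equiv Y_{m+1-a}=Y_{(a_0,m+1-a_0)}\pmod{\bb_{m+1}^{(1)}}$ established in the proof of Theorem \ref{mainthm3} (where $a=[(m+1)/2]$, $a_0=m+1-a$). Since $E_{[(m+1)/2]}=\pm w_1 E_{a_0,a} w_2$ for suitable $w_1,w_2\in\Sym_{m+1}$ by Lemma \ref{lm00}, and since $d(\s)^{-1}E_{[(m+1)/2]}d(\t)$ thus differs from $d(\s)^{-1}E_{a_0}d(\t)$ only by left/right multiplication by fixed symmetric group elements, it is enough to show the corresponding elements $d(\s)^{-1}E_{a_0}d(\t)$ are linearly independent. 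Modulo $\bb_{m+1}^{(1)}$ these reduce to the $Y$-Murphy-type elements $d(\s)^{-1}Y_{a_0}d(\t)$ in $K\Sym_{m+1}$; by the $Y$-Murphy basis theory recalled in Section 2, together with Theorem \ref{mainthm} which identifies $\<Y_{a_0}\>_0 = (K\Sym_{m+1})^{\unrhd(a_0,m+1-a_0)}$, one extracts from $Y_{a_0}K\Sym_{m+1}$ (and more precisely from the two-sided ideal it generates) a spanning family $\{d(\s)^{-1}Y_{a_0}d(\t)\}$ indexed exactly by pairs of standard tableaux of shapes $(m+1-k,k)$, $0\leq k\leq a$, which is $K$-linearly independent by the cellularity/leading-term argument of Theorem \ref{mainthm} Step 3 applied to the $Y$-cellular structure. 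Because $\<E_{m+1-a}\>\cap\bb_{m+1}^{(1)}=\{0\}$ (proved en route to Theorem \ref{mainthm3} over $\C$, hence dimension-counted into characteristic $p$ via \cite{DH}), the reduction map $\<E_{m+1-a}\>\to K\Sym_{m+1}$ is injective, so $K$-linear independence downstairs lifts to $K$-linear independence of the $d(\s)^{-1}E_{[(m+1)/2]}d(\t)$ upstairs.

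Having $(m+1)_a$ linearly independent elements inside $\<E_{[(m+1)/2]}\>=\Ann_{\bb_{m+1}(m)}(V^{\otimes m+1})$ (the equality being Theorem \ref{mainthm3}), and knowing the annihilator has dimension exactly $(m+1)_a$, forces the set to be a $K$-basis. The main obstacle I anticipate is bookkeeping rather than conceptual: one must verify that the pairs $(\s,\t)$ arising naturally from the $Y$-Murphy decomposition of $\<Y_{a_0}\>_0 = (K\Sym_{m+1})^{\unrhd(a_0,m+1-a_0)}$ — whose indexing partitions $\mu$ range over all $\mu\unrhd(a_0,m+1-a_0)$, i.e. $\mu=(m+1-k,k)$ with $0\leq k\leq a$ — match exactly the tableau pairs in the asserted basis, and that applying $d(\s)^{-1}(-)d(\t)$ to $E_{[(m+1)/2]}$ itself (rather than to $E_{a_0}$ or to $Y_{a_0}$) does not disturb this correspondence, which is handled by absorbing the fixed conjugating permutations $w_1,w_2$ from Lemma \ref{lm00} into a relabelling of the index set and invoking that they induce an invertible change of basis on each cell. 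No genuinely new machinery beyond Sections 2--4 is required.
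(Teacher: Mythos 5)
The corollary you are trying to prove is, as literally stated, false, and the plan breaks down precisely at the very first step where you count the set. Take $m=1$, so $n=m+1=2$, $a=[(m+1)/2]=1$. The only standard tableaux of shapes $(2)$ and $(1,1)$ are the initial tableaux $\t^{(2)}$ and $\t^{(1,1)}$, both with $d(\cdot)=\id$, so the displayed set is literally the singleton $\{E_1\}=\{\pm(1-e_1)\}$. Yet $\dim_K\Ann_{\bb_2(1)}(V^{\otimes 2})=\dim S^{(2,2)}=2$ (one checks directly that $1-s_1$ and $1-e_1$ are independent elements of the kernel). Already for $m=2$ the same phenomenon occurs even more dramatically: since $s_2$ lies in the Young subgroup $\Sym_{(1,2)}$, one computes $E_1s_2=s_2E_1=-E_1$, so all four elements $d(\s)^{-1}E_1d(\t)$ with $\s,\t\in\std(2,1)$ equal $\pm E_1$ and the ``set'' spans a one-dimensional space, while $\dim\Ann_{\bb_3(2)}(V^{\otimes3})=\dim S^{(3,3)}=5$. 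So the cardinality claim ``exactly $\sum_k(\#\std(m+1-k,k))^2$'' is wrong: for every $k$ the choice $\s=\t=\t^{(m+1-k,k)}$ produces the same element $E_{[(m+1)/2]}$, and more generally any $d(\s),d(\t)$ lying in $\Sym_{(a,m+1-a)}$ collapse $d(\s)^{-1}E_ad(\t)$ to $\pm E_a$. A blind attempt should have caught this numerical obstruction before anything else.

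The subsequent linear-independence argument has a second, independent gap. Reducing $d(\s)^{-1}E_{[(m+1)/2]}d(\t)$ modulo $\bb_{m+1}^{(1)}$ gives $d(\s)^{-1}Y_{(a,m+1-a)}d(\t)$, where the middle factor $Y_{(a,m+1-a)}$ is \emph{fixed}, independent of the shape $(m+1-k,k)$ of $\s$ and $\t$. These are not the cellular $Y$-Murphy basis elements $Y_{\s\t}=d(\s)^{-1}Y_{(m+1-k,k)}d(\t)$, whose middle factor varies with $k$. Consequently their linear independence does not follow from the cellular (leading-term) theory you invoke, and in fact they are \emph{not} linearly independent, as the $m=2$ computation shows. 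Nor does the proof of Theorem \ref{mainthm}, Step 3 help: the elements it produces are of the shape $v_{\s,j}=d(\s)^{-1}X_\lambda\bigl(\sum_{\t\in S_0(\lambda)}d(\t)\bigr)u_j$ with the $u_j$ chosen from a basis of a certain quotient, and they are not of the form $d(\s)^{-1}X_\lambda d(\t)$ with both $\s,\t$ standard. So the analogy you draw does not transfer. For the record, the paper's own proof of this corollary is the one-line assertion that it ``follows directly from the proof of Theorem \ref{mainthm3}'', which itself only produces a dimension lower bound for $\langle E_{m+1-a}\rangle$ and never exhibits the displayed family as a basis; a corrected statement would have to replace the single $E_{[(m+1)/2]}$ by a $k$-dependent lift (for instance something reducing to $Y_{(m+1-k,k)}$ modulo $\bb_{m+1}^{(1)}$ on the $k$-th cell), which is not what is written.
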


\begin{proof} This follows directly from the proof of Theorem \ref{mainthm3}.
\end{proof}

\medskip

\centerline{\bf Acknowledgment}\medskip

The first author is supported by the National Natural Science
Foundation of China. The second author is supported by a research
foundation of Huaqiao University (Grant No. 10BS323).

\medskip

\end{document}